\newcommand{\dd}{\mathrm{d}}
\newcommand{\ii}{\mathrm{i}}
\newcommand{\ee}{\mathrm{e}}
\newcommand{\arctanh}{\mathop{\mathrm{arctanh}}}
\newcommand{\psum}{\sideset{_{}^{}}{_{}^{\prime}}\sum}
\newcommand{\pprod}{\sideset{_{}^{}}{_{}^{\prime}}\prod}
\newtheorem{thm}{Theorem}[section]
\newtheorem{lem}[thm]{Lemma}
\theoremstyle{definition}
\newtheorem{defn}[thm]{Definition}
\theoremstyle{remark}
\newtheorem{rem}[thm]{Remark}
\numberwithin{equation}{section}
\newcommand{\figcaption}[1]{\def\@captype{figure}\caption{#1}}
\newcommand{\tblcaption}[1]{\def\@captype{table}\caption{#1}}
\begin{document}

% \title[short text for running head]{full title}
\title[An optimal approximation formula]
%{An optimal approximation formula by the Ganelius sampling points}
{An optimal approximation formula for functions with singularities}

%    Only \author and \address are required; other information is
%    optional.  Remove any unused author tags.

%    author one information
% \author[short version for running head]{name for top of paper}
\author{Ken'ichiro Tanaka}
\address{Department of Mathematical Engineering, Faculty of Engineering, Musashino University,
3-3-3, Ariake, Koto-ku, Tokyo 135-8181, Japan}
\curraddr{}
\email{ketanaka@musashino-u.ac.jp}
\thanks{}

\author{Tomoaki Okayama}
\address{Department of Systems Engineering, Graduate School of Information Sciences, Hiroshima City University, 
3-4-1, Ozuka-higashi, Asaminami-ku, Hiroshima 731-3194, Japan}
\curraddr{}
\email{okayama@hiroshima-cu.ac.jp}
\thanks{}

\author{Masaaki Sugihara}
\address{Department of Physics and Mathematics, College of Science and Engineering, Aoyama Gakuin University
5-10-1, Fuchinobe, Chuo-ku, Sagamihara-shi, Kanagawa 252-5258, Japan}
\curraddr{}
\email{sugihara@gem.aoyama.ac.jp}
\thanks{}

%    \subjclass is required.
\subjclass[2010]{Primary 65D15; Secondary 41A25}

\keywords{Hardy space, endpoint singularity, optimal approximation, Ganelius sampling points}

\date{October 22, 2016}

\dedicatory{}

%    Abstract is required.
\begin{abstract}
We propose an optimal approximation formula for analytic functions 
that are defined on a complex region containing the real interval $(-1,1)$ and 
possibly have algebraic singularities at the endpoints of the interval. 
As a space of such functions, 
we consider a Hardy space with the weight given by $w_{\mu}(z) = (1-z^{2})^{\mu/2}$ for $\mu > 0$, 
and formulate the optimality of an approximation formula for the functions in the space. 
Then, we propose an optimal approximation formula for the space for any $\mu > 0$ 
as opposed to existing results with the restriction $0 < \mu < \mu_{\ast}$ for a certain constant $\mu_{\ast}$. 
We also provide the results of numerical experiments to show the performance of the proposed formula. 
 
\end{abstract}

\maketitle

\section{Introduction}
\label{sec:intro}
This paper is concerned with approximation of functions 
by a finite number of the sampled values of them. 
We consider analytic functions that are defined on a complex region containing a real interval and 
possibly have endpoint singularities on the interval. 
In order to deal with such functions collectively,  
we consider a function space consisting of them
and formulate the optimality of an approximation formula for the functions in the space. 
Then, we propose an optimal approximation formula for the function space. 

We consider the region given by 
\begin{align}
\notag
\varLambda_d = 
\left\{
z \in \mathbf{C} 
\left| \ 
\left|
\mathop{\mathrm{arg}} \left( \frac{1 + z}{1 - z} \right)
\right| < d
\right.
\right\}, 
\end{align}
which satisfies $\varLambda_d \cap \mathbf{R} = (-1, 1)$.
In order to deal with analytic functions on $\varLambda_d$ with 
algebraic singularities at the endpoints $\pm 1$, 
we consider the function space given by
\begin{align}
\notag
\boldsymbol{H}^{\infty}(\varLambda_{d}, w_{\mu}) 
=
\left\{ 
f: \varLambda_{d} \to \mathbf{C} 
\left| \,  
\text{$f$ is analytic in $\varLambda_{d}$ and } 
\sup_{z \in \varLambda_{d}} \left| \frac{f(z)}{w_{\mu}(z)} \right| < \infty  
\right.
\right\}, 
\end{align}
where $\mu$ is a positive number and $w_{\mu}(z) = (1-z^{2})^{\mu/2}$. 
This space has been studied as a fundamental space for 
the sinc numerical methods~\cite{bib:StengerBook1993, bib:StengerBook2011}, 
which are the numerical methods based on the approximation of functions by the sinc function (see~\eqref{eq:SE-Sinc}). 
The error analysis of the sinc approximation has been performed 
in these decades~\cite{bib:StengerBook1993, 
bib:StengerSincSummary2000, 
bib:StengerBook2011, 
%SugiharaMatsuo_Sinc_2004, 
TanaSugiMuro_DE_Sinc_2009}.
It is well-known that the sinc approximation has very good accuracy in $\boldsymbol{H}^{\infty}(\varLambda_{d}, w_{\mu})$. 

%On the other hand, 
%there are several researches that deal with the optimal errors of the approximations 
%in spaces of analytic functions like $\boldsymbol{H}^{\infty}(\varLambda_{d}, w_{\mu})$. 
Besides the studies of such concrete formulas in $\boldsymbol{H}^{\infty}(\varLambda_{d}, w_{\mu})$, 
there are several analyses of the errors of optimal formulas 
in spaces of analytic functions like $\boldsymbol{H}^{\infty}(\varLambda_{d}, w_{\mu})$. 
In the literatures~\cite{bib:BurchardHollig_nwidth_1985, bib:StengerMinNorm1978, bib:Sugihara2003_OptSinc, bib:Wilderotter_nwidth_1992}, 
the authors have estimated the optimal errors in Hardy spaces with preassigned decay rates. 
In particular, 
Sugihara~\cite{bib:Sugihara2003_OptSinc} 
has given a lower bound of the optimal error in $\boldsymbol{H}^{\infty}(\varLambda_{d}, w_{\mu})$
and revealed that the sinc approximation is near optimal in the space. 
In order to formulate the optimality of an approximation formula for the functions in $\boldsymbol{H}^{\infty}(\varLambda_{d}, w_{\mu})$, 
he considered all the possible $n$-point approximation formulas in the space and the norms of their error operators. 
Then, he defined the minimum error norm 
$E_{n}^{\mathrm{min}}(\boldsymbol{H}^{\infty}(\varLambda_{d}, w_{\mu}))$ 
by the minimum of the norms. 
Furthermore, he also considered the error norm of the sinc approximation on $\boldsymbol{H}^{\infty}(\varLambda_{d}, w_{\mu})$, 
denoted by $E_{n}^{\mathrm{sinc}}(\boldsymbol{H}^{\infty}(\varLambda_{d}, w_{\mu}))$, 
and has shown that 
\[
c'' \exp(-c_{2} \sqrt{n})
\leq
E_{n}^{\mathrm{min}}(\boldsymbol{H}^{\infty}(\varLambda_{d}, w_{\mu}))
\leq 
E_{n}^{\mathrm{sinc}}(\boldsymbol{H}^{\infty}(\varLambda_{d}, w_{\mu}))
\leq 
c' \sqrt{n} \exp(-c_{1} \sqrt{n}), 
\] 
where $c'$, $c''$, $c_{1}$, and $c_{2}$ are positive constants with $c_{1} < c_{2}$ (see \eqref{eq:NearOptSinc}). 

However, 
finding an explicit approximation formula attaining 
$E_{n}^{\mathrm{min}}(\boldsymbol{H}^{\infty}(\varLambda_{d}, w_{\mu}))$
has been an open problem so far 
whereas the exact order of $E_{n}^{\mathrm{min}}(\boldsymbol{H}^{\infty}(\varLambda_{d}, w_{\mu}))$
with respect to $n$ is known in some restricted case. 
Recently, in the restricted case that $0 < \mu < \min\{ 2, \pi/d \}$, 
Ushima et al.~\cite{bib:UTOS_Ganelius_JSIAM} have proposed an optimal formula 
by using the technique of Jang and Haber~\cite{bib:JangHaber2001}, 
in which they employ a modification of the sampling points given by Ganelius~\cite{bib:Ganelius_1976}.  
The restriction $0 < \mu < \min \{2, \pi/d \}$ is owing to the assumption $r < 1$ in the Ganelius theorem~\cite[Lemma~1]{bib:JangHaber2001}, 
which plays an important role for the error estimate of the formula. 
In this paper, we remove this restriction and propose an optimal formula for any $\mu > 0$
by generalizing the formula in~\cite{bib:UTOS_Ganelius_JSIAM}. 

The rest of this paper is organized as follows. 
In Section~\ref{sec:math_pre}, 
we list mathematical tools for setting the framework for approximation of the functions in $\boldsymbol{H}^{\infty}(\varLambda_{d}, w_{\mu})$. 
We give the more precise explanations of the region $\varLambda_{d}$, space $\boldsymbol{H}^{\infty}(\varLambda_{d}, w_{\mu})$, 
and the notion of the optimal approximation in $\boldsymbol{H}^{\infty}(\varLambda_{d}, w_{\mu})$. 
Furthermore, we review some existing results about the estimate of 
$E_{n}^{\mathrm{min}}(\boldsymbol{H}^{\infty}(\varLambda_{d}, w_{\mu}))$. 
In Section~\ref{sec:approx_formula}, 
we present our new formula and show its error estimate in Theorem~\ref{thm:general-Ganelius-approximation}. 
By combining this theorem and the existing result giving the lower estimate of 
$E_{n}^{\mathrm{min}}(\boldsymbol{H}^{\infty}(\varLambda_{d}, w_{\mu}))$, 
we show the optimality of the proposed formula. 
The proof of Theorem~\ref{thm:general-Ganelius-approximation} is owing to three lemmas, 
whose proofs are presented in Section~\ref{sec:proof_of_main_steps}. 
The last one of them,  Lemma~\ref{lem:from_gen_Ganelius}, is proven by being reduced to Theorem~\ref{thm:GenGan}, 
a generalization of the Ganelius theorem without the assumption $r < 1$. 
This theorem is proven in Appendix~\ref{sec:proof_of_generalGan}. 
In Section~\ref{sec:num_exp}, 
we present some numerical results showing the performance of our formula. 
Finally, we conclude this paper in Section~\ref{sec:concl}.

\section{Mathematical preliminaries and existing results}
\label{sec:math_pre}
\subsection{Function space $\boldsymbol{H}^{\infty}(\varLambda_{d}, w_{\mu})$}

For a real number $d$ with $0 < d < \pi$, 
we consider the strip region $\mathcal{D}_{d} := \{ \zeta \in \mathbf{C} \mid | \mathop{\mathrm{Im}} \zeta \, | < d \}$. 
Then, we define a region $\varLambda_{d}$ by
\begin{align}
\label{eq:def_Lambda_d}
\varLambda_d := 
\left\{ 
z \in \mathbf{C} 
\mid
z = \tanh(\zeta/2), \ \zeta \in \mathcal{D}_{d}
\right\}, 
\end{align}
and set the counterclockwise direction to its boundary $\partial \varLambda_{d}$. 
The region $\varLambda_d$ can be written in the form 
\begin{align}
\notag
\varLambda_d = 
\left\{
z \in \mathbf{C} 
\left| \ 
\left|
\mathop{\mathrm{arg}} \left( \frac{1 + z}{1 - z} \right)
\right| < d
\right.
\right\}. 
\end{align}
Furthermore, the region $\varLambda_d$ is symmetric with respect to the real axis and satisfies $\varLambda_d \cap \mathbf{R} = (-1, 1)$. 
The upper half part of $\varLambda_d$ coincides with the intersection of the upper half plane and the open disc with 
center $(-\mathrm{i}/\tan d)$ and radius $1/\sin d$. 
The intersection of the boundary $\partial \varLambda_{d}$ and the imaginary axis 
consists of $(\pm \mathrm{i} \tan (d/2))$. 
In particular, 
the region $\varLambda_d$ becomes 
an eye-shaped region if $0 < d < \pi/2$, 
a unit disc if $d = \pi/2$, and 
the entire complex plane if $d \to \pi$.
As an example, we show the region $\varLambda_{\pi/4}$ in Figure~\ref{fig:region}.

\begin{figure}[!ht]
\centering
\includegraphics[width=.6\linewidth]{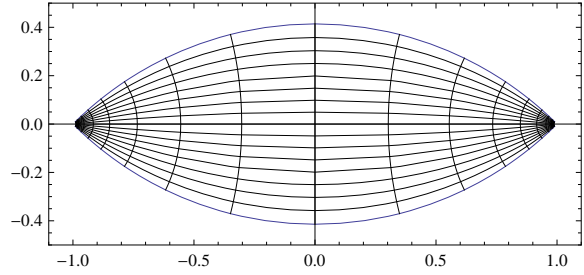}
\caption{Region $\varLambda_{\pi/4}$.}
\label{fig:region}
\end{figure}

Throughout this paper, 
we consider approximation of analytic functions defined on $\varLambda_{d}$
that possibly have algebraic singularities at the endpoints $\pm 1$. 
Accordingly, we introduce a function space of such functions as below. 

\begin{defn}
\label{defn:Hardy}
Let $\mu$ be a positive number and let 
\begin{align}
\notag
w_{\mu}(z) := (1-z^{2})^{\mu/2}. 
\end{align}
We define a function space $\boldsymbol{H}^{\infty}(\varLambda_{d}, w_{\mu})$ by 
\begin{align}
\notag
\boldsymbol{H}^{\infty}(\varLambda_{d}, w_{\mu}) 
:=
\{ f: \varLambda_{d} \to \mathbf{C} \mid \text{$f$ is analytic in $\varLambda_{d}$ and } \| f \| < \infty  \}, 
\end{align}
where 
\begin{align}
\notag
\| f \| := \sup_{z \in \varLambda_{d}} \left| \frac{f(z)}{w_{\mu}(z)} \right|. 
\end{align} 
\end{defn}

\begin{rem}
\label{rem:Hardy}
A function $f \in \boldsymbol{H}^{\infty}(\varLambda_{d}, w_{\mu})$ satisfies 
\begin{align}
\notag
| f(z) | \leq \| f \| \, | (1 - z^{2})^{\mu/2} | 
\end{align}
for any $z \in \varLambda_{d}$. 
Therefore, the function $f$ tends to zero with order $\mathrm{O}((1\pm z)^{\mu/2})$ as $z \to \mp 1$. 
\end{rem}

%---------
\subsection{Minimum error norm $E_{n}^{\mathrm{min}}(\boldsymbol{H}^{\infty}(\varLambda_{d}, w_{\mu}))$}

For $f \in \boldsymbol{H}^{\infty}(\varLambda_{d}, w_{\mu})$, 
we consider all the possible $n$-point approximation formulas written in the form 
\begin{align}
\label{eq:approx_general}
f(x) \approx \sum_{j=1}^{\ell} \sum_{k=0}^{m_{j}-1} f^{(k)}(a_{j})\, \phi_{jk}(x), 
\end{align}
where 
$\ell$ is an integer with $1\leq \ell \leq n$, 
$\{ m_{j} \}$ is a sequence of nonnegative integers with $m_{1} + \cdots + m_{\ell} = n$, 
$\{ a_{j} \}$ is a sequence of sampling points in $\varLambda_{d}$, and 
$\{ \phi_{jk} \}$ is a sequence of analytic functions on $\varLambda_{d}$. 
Then, let $\mathcal{N}^{\ell, m_{i}}_{a_{j}, \phi_{jk}}$ denote the operator norm  
of the error operator associated with Formula~\eqref{eq:approx_general}:
\begin{align}
\label{eq:norm_err_op_general}
\mathcal{N}^{\ell, m_{i}}_{a_{j}, \phi_{jk}}
:= 
\sup_{
\begin{subarray}{c}
f \in \boldsymbol{H}^{\infty}(\varLambda_{d}, w_{\mu}) \\
\| f \| \leq 1
\end{subarray}}
\left[
\sup_{x \in (-1,1)} 
\left| f(x) - \sum_{j=1}^{\ell} \sum_{k=0}^{m_{j}-1} f^{(k)}(a_{j})\, \phi_{jk}(x) \right|
\right].
\end{align}
We call the value $\mathcal{N}^{\ell, m_{i}}_{a_{j}, \phi_{jk}}$ the error norm of Formula~\eqref{eq:approx_general}
and adopt it as a criterion for evaluating the accuracy of Formula~\eqref{eq:approx_general}. 
Therefore, an approximation formula with the form in~\eqref{eq:approx_general} is \emph{optimal} 
if it achieves the infimum of the error norm $\mathcal{N}^{\ell, m_{i}}_{a_{j}, \phi_{jk}}$ over any $n$-point approximation formulas. 
By letting the infimum denoted by 
\begin{align}
\label{eq:min_err_norm}
E_{n}^{\mathrm{min}}(\boldsymbol{H}^{\infty}(\varLambda_{d}, w_{\mu}))
:=
\inf_{1\leq \ell \leq n} 
\inf_{
\begin{subarray}{c}
m_{1}, \ldots, m_{\ell} \\
m_{1} + \cdots + m_{\ell} = n
\end{subarray}
}
\inf_{a_{j} \in \varLambda_{d}} 
\inf_{\phi_{jk}}
\mathcal{N}^{\ell, m_{i}}_{a_{j}, \phi_{jk}}, 
\end{align}
we call it the minimum error norm of the $n$-point approximation in $\boldsymbol{H}^{\infty}(\varLambda_{d}, w_{\mu})$. 
In the literature 
\cite{bib:Sugihara2003_OptSinc}, 
a lower bound of 
$E_{n}^{\mathrm{min}}(\boldsymbol{H}^{\infty}(\varLambda_{d}, w_{\mu}))$ 
is given. 

\begin{thm}[{\cite[(a) on page 782]{bib:Sugihara2003_OptSinc}}]
\label{thm:min_err_lower_bound}
The minimum error norm \eqref{eq:min_err_norm} is bounded from below as follows:
\begin{align}
\label{eq:min_err_lower_bound}
E_{n}^{\mathrm{min}}(\boldsymbol{H}^{\infty}(\varLambda_{d}, w_{\mu})) 
\geq 
c\, \exp \left( -\sqrt{\pi d \mu n/2} \right), 
\end{align}
where $c$ is a positive number independent of $n$. 
\end{thm}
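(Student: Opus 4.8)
The plan is to reduce the lower bound on $E_{n}^{\mathrm{min}}(\boldsymbol{H}^{\infty}(\varLambda_{d}, w_{\mu}))$ to a statement about how well a single, cleverly chosen family of ``hard'' functions in the unit ball of $\boldsymbol{H}^{\infty}(\varLambda_{d}, w_{\mu})$ can be reconstructed from $n$ pieces of Hermite data. The key observation is that any $n$-point formula of the form \eqref{eq:approx_general} annihilates a linear space of ``missed'' directions: if $g \in \boldsymbol{H}^{\infty}(\varLambda_{d}, w_{\mu})$ has $g^{(k)}(a_{j}) = 0$ for all $j$ and all $0 \le k \le m_{j}-1$, then the formula returns $0$ on $g$, so the error on $g$ is $\sup_{x \in (-1,1)} |g(x)|$, and likewise on $-g$. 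Hence $\mathcal{N}^{\ell,m_{i}}_{a_{j},\phi_{jk}} \ge \sup_{x \in (-1,1)} |g(x)|$ for every such $g$ with $\|g\| \le 1$. The first step, therefore, is to exhibit, for each admissible choice of $\ell$, $\{m_{j}\}$, $\{a_{j}\}$ with $\sum m_{j} = n$, a function $g$ vanishing to the prescribed orders at the $a_{j}$'s, lying in the unit ball, and yet not small on $(-1,1)$.

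First I would transport the problem to the strip $\mathcal{D}_{d}$ via $z = \tanh(\zeta/2)$, under which $\varLambda_{d}$ becomes $\mathcal{D}_{d}$, the interval $(-1,1)$ becomes the real axis $\mathbf{R}$, and the weight behaves like $w_{\mu}(\tanh(\zeta/2)) \asymp (\cosh(\zeta/2))^{-\mu} \asymp \ee^{-\mu|\mathop{\mathrm{Re}}\zeta|/2}$ near the ends. So membership in the unit ball of $\boldsymbol{H}^{\infty}(\varLambda_{d}, w_{\mu})$ corresponds, up to constants, to an analytic function on $\mathcal{D}_{d}$ bounded by $\ee^{-\mu|\mathop{\mathrm{Re}}\zeta|/2}$. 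Next, to build the extremal $g$, I would use a Blaschke-type product: conformally map the strip $\mathcal{D}_{d}$ to the unit disc, place the (images of the) sampling points, and form the finite Blaschke product $B$ of degree $n$ that vanishes at them with the right multiplicities; $B$ has modulus $1$ on $\partial\mathcal{D}_{d}$. Multiplying by a fixed function realizing the weight decay $\ee^{-\mu|\mathop{\mathrm{Re}}\zeta|/2}$ (e.g.\ the transported $w_{\mu}$ itself, which is analytic and nonvanishing on $\varLambda_{d}$) gives a candidate $g$ with $\|g\| \lesssim 1$. The point then is to lower-bound $\sup_{x \in (-1,1)}|g(x)|$, equivalently $\sup_{t \in \mathbf{R}} |B(t)| \ee^{-\mu|t|/2}$, from below by $c\exp(-\sqrt{\pi d\mu n/2})$, uniformly over all placements of the $n$ zeros.

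The heart of the estimate is the potential-theoretic balance hidden in the exponent $\sqrt{\pi d\mu n/2}$. A finite Blaschke product of degree $n$ on the strip, restricted to $\mathbf{R}$, can be made small on a fixed bounded $t$-window by clustering zeros there, but then it is forced to be close to $1$ outside a window of width $\mathrm{O}(n)$; conversely the weight $\ee^{-\mu|t|/2}$ kills the contribution from $|t| \gtrsim (2/\mu)\log(1/\varepsilon)$. Optimizing the trade-off between ``how wide a low region of width $W$ a degree-$n$ Blaschke product on $\mathcal{D}_{d}$ can sustain'' (the relevant scale being $W \asymp (d/\pi)\,n$, since crossing the strip of height $2d$ costs one zero per horizontal distance $\sim \pi/d$ by the harmonic-measure geometry) against the weight decay rate $\mu/2$ yields a worst-case floor of order $\exp(-\sqrt{\pi d\mu n/2})$; choosing $\varepsilon$ with $\log(1/\varepsilon) \asymp \sqrt{\pi d\mu n/2}$ equalizes the two effects. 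Concretely I would estimate $\log|B(t)|$ on $\mathbf{R}$ from below using Jensen's formula / harmonic measure on $\mathcal{D}_{d}$: for any $t$, $-\log|B(t)| = \sum_{j}\sum (\text{harmonic measure terms})$, and one shows $\int_{\mathbf{R}} (-\log|B(t)|)\,\ee^{-\mu|t|/2}\,\dd t \le$ (a constant)$\cdot n/d$ by Fubini, hence $-\log|B(t)|$ cannot exceed $\sim n/(d\cdot\text{width})$ on the bulk of the weight's support, and balancing gives the bound.

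The main obstacle I anticipate is making the worst-case (uniform-over-placements) lower bound for $\sup_{t}|B(t)|\ee^{-\mu|t|/2}$ genuinely tight with the sharp constant $\sqrt{\pi d\mu/2}$ rather than just the correct $\sqrt{n}$-order: an adversary may spread the zeros unevenly, send some toward the boundary $\partial\mathcal{D}_{d}$, or push them to $\mathop{\mathrm{Re}}\zeta \to \pm\infty$, and one must check none of these beats the claimed rate. This is exactly where the precise geometry of $\varLambda_{d}$ (equivalently the strip width $2d$) enters through harmonic measure, and where the extremal configuration — zeros equally spaced along $\mathbf{R}$ with spacing $\sim \pi/d$ over a window of length $\sim \sqrt{2\pi n d/\mu}$ — has to be identified and shown optimal, e.g.\ by a convexity/variational argument on the zero-counting measure. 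Since this theorem is quoted from \cite{bib:Sugihara2003_OptSinc}, I would either cite that argument directly or reproduce the potential-theoretic optimization sketched above.
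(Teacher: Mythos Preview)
The paper does not prove Theorem~\ref{thm:min_err_lower_bound}; it is quoted verbatim from Sugihara~\cite{bib:Sugihara2003_OptSinc} and used as a black box to certify optimality of the new formula. So there is no ``paper's own proof'' to compare against here, and your closing remark that one could simply cite \cite{bib:Sugihara2003_OptSinc} is in fact exactly what the present paper does.

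That said, your sketch is the right shape and matches the strategy actually used in \cite{bib:Sugihara2003_OptSinc}: reduce to functions in the unit ball that vanish (with multiplicity) at the prescribed data, realize those via a finite Blaschke-type product after transporting to the strip $\mathcal{D}_{d}$, multiply by a function carrying the weight decay, and then show that no placement of $n$ zeros can make $\sup_{t\in\mathbf{R}} |B(t)|\,\ee^{-\mu|t|/2}$ smaller than $c\,\exp(-\sqrt{\pi d\mu n/2})$. The one place where your outline is genuinely soft is the last step: the integral/Fubini heuristic you give controls an average of $-\log|B(t)|$ against the weight, but turning that into a pointwise $\sup$ lower bound with the sharp constant $\sqrt{\pi d\mu/2}$ requires more than a balancing argument --- in Sugihara's treatment this is done by an explicit extremal computation tied to the conformal map of $\mathcal{D}_{d}$, not by a generic harmonic-measure inequality. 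If you intend to reproduce rather than cite, that is the step to flesh out.
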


On the other hand, 
an upper bound of $E_{n}^{\mathrm{min}}(\boldsymbol{H}^{\infty}(\varLambda_{d}, w_{\mu}))$
can be given by the error norm $\mathcal{N}^{\ell, m_{i}}_{a_{j}, \phi_{jk}}$ of 
an approximation formula that is applicable to the functions in $\boldsymbol{H}^{\infty}(\varLambda_{d}, w_{\mu})$. 
It is well-known that 
an upper bound close to the lower bound in~\eqref{eq:min_err_lower_bound} 
is given by the sinc approximation formula with a variable transformation of a single exponential type 
as shown in the next subsection. 

%---------
\subsection{Nearly optimal formula (Sinc approximation)}

The $(2N+1)$-point sinc approximation is defined by 
\begin{align}
\label{eq:SE-Sinc}
f(x) \approx
\sum_{j = -N}^{N} f(\psi(jh)) \, S(j, h)(\psi^{-1}(x)), 
\end{align}
where 
\begin{align}
\notag
h &:= \sqrt{\frac{2 \pi d}{\mu N}}, \\
\notag
\psi(\zeta) &:= \tanh(\zeta/2),
\end{align}
and
\begin{align}
\notag
S(j, h)(t) := \frac{\sin [\pi (t/h - j)]}{\pi (t/h - j)}. 
\end{align}
Formula~\eqref{eq:SE-Sinc} is called the SE-Sinc formula, 
which has been intensively studied by Stenger et al.~\cite{
bib:LundBowers1992, 
bib:StengerBook1993, 
bib:StengerSincSummary2000, 
bib:StengerBook2011}.
Let $E_{2N+1}^{\mathrm{sinc}}(\boldsymbol{H}^{\infty}(\varLambda_{d}, w_{\mu}))$
be the error norm $\mathcal{N}^{\ell, m_{i}}_{a_{j}, \phi_{jk}}$ of Formula~\eqref{eq:SE-Sinc}, i.e., 
\begin{align}
& E_{2N+1}^{\mathrm{sinc}}(\boldsymbol{H}^{\infty}(\varLambda_{d}, w_{\mu})) \\
& :=
\sup_{
\begin{subarray}{c}
f \in \boldsymbol{H}^{\infty}(\varLambda_{d}, w_{\mu}) \\
\| f \| \leq 1
\end{subarray}}
\left[
\sup_{x \in (-1,1)} 
\left| f(x) - \sum_{j = -N}^{N} f(\psi(jh)) \ S(j, h)(\psi^{-1}(x)) \right|
\right]. \notag
\end{align}
Sugihara~\cite{bib:Sugihara2003_OptSinc} has shown the following upper bound of 
$E_{2N+1}^{\mathrm{sinc}}(\boldsymbol{H}^{\infty}(\varLambda_{d}, w_{\mu}))$. 

\begin{thm}[{\cite[(a) on page 782]{bib:Sugihara2003_OptSinc}}]
\label{thm:sinc_err_upper_bound}
The minimum error norm of the SE-Sinc formula is bounded from above as follows:
\begin{align}
E_{2N+1}^{\mathrm{sinc}}(\boldsymbol{H}^{\infty}(\varLambda_{d}, w_{\mu})) 
\leq 
c'\, \sqrt{N} \exp\left( -\sqrt{\pi d \mu N / 2} \right),
\end{align}
where $c'$ is a positive number independent of $N$.
\end{thm}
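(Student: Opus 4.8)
The plan is to establish the upper bound on $E_{2N+1}^{\mathrm{sinc}}(\boldsymbol{H}^{\infty}(\varLambda_{d}, w_{\mu}))$ by the classical two-step decomposition of the sinc approximation error: splitting it into a \emph{discretization error} coming from replacing the cardinal sine series on the whole strip by its truncation to $2N+1$ terms, plus a \emph{truncation error} coming from the tails of the infinite series. First I would pass to the transformed variable: for $f \in \boldsymbol{H}^{\infty}(\varLambda_{d}, w_{\mu})$ with $\|f\| \le 1$, set $g := f \circ \psi$, where $\psi(\zeta) = \tanh(\zeta/2)$ maps the strip $\mathcal{D}_d$ conformally onto $\varLambda_d$. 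Since $|f(z)| \le |w_\mu(z)| = |(1-z^2)^{\mu/2}|$ on $\varLambda_d$, and under $z = \tanh(\zeta/2)$ one has $1 - z^2 = 1/\cosh^2(\zeta/2) = 4\ee^{\zeta}/(1+\ee^\zeta)^2$, the function $g$ satisfies a bound of the form $|g(\zeta)| \le C\, (\cosh(\mathop{\mathrm{Re}}\zeta/2))^{-\mu}$ on $\mathcal{D}_d$, i.e. $g$ belongs to a Hardy-type class on the strip with a single-exponential decay rate $\mu/2$ in each direction as $\mathop{\mathrm{Re}}\zeta \to \pm\infty$. This is precisely the setting in which the standard sinc error estimates (as found in Stenger's book or in the references cited) apply.

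Next I would invoke the two standard estimates for this class. The discretization (aliasing) error of the sinc interpolation on the whole real line is controlled by a contour-integral argument on $\partial \mathcal{D}_d$ and yields a bound proportional to $\ee^{-\pi d / h}$ (up to the usual factor that decays like the reciprocal of the boundary-distance term; for the uniform norm on $(-1,1)$ this typically produces an extra harmless constant, not a factor of $\sqrt{N}$). The truncation error, obtained by bounding the discarded terms $\sum_{|j| > N} g(jh)\, S(j,h)$, is dominated using the decay $|g(jh)| \le C\ee^{-(\mu/2)|j|h}$ together with $|S(j,h)(t)| \le 1$, giving a geometric tail of size proportional to $\ee^{-(\mu/2) N h}$, possibly with a polynomial-in-$N$ prefactor arising from summing the sinc envelope near the truncation point — this is the source of the $\sqrt{N}$ factor in the statement. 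Adding the two contributions gives a bound of the form $C\big(\ee^{-\pi d / h} + \sqrt{N}\,\ee^{-(\mu/2) N h}\big)$.

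The final step is the balancing of the two exponents, which is exactly why $h$ is chosen as $h = \sqrt{2\pi d/(\mu N)}$. With this choice, $\pi d / h = \pi d \sqrt{\mu N/(2\pi d)} = \sqrt{\pi d \mu N / 2}$, and likewise $(\mu/2) N h = (\mu/2) N \sqrt{2\pi d/(\mu N)} = \sqrt{\pi d \mu N/2}$, so both exponential terms collapse to $\exp(-\sqrt{\pi d \mu N/2})$, and one obtains $E_{2N+1}^{\mathrm{sinc}}(\boldsymbol{H}^{\infty}(\varLambda_{d}, w_{\mu})) \le c' \sqrt{N}\exp(-\sqrt{\pi d \mu N/2})$ with $c'$ independent of $N$. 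Since the bound on the right is uniform over the unit ball $\{\|f\| \le 1\}$ and over $x \in (-1,1)$, it bounds the error norm, completing the proof.

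The main obstacle I anticipate is making the discretization-error estimate fully rigorous in the \emph{uniform} norm over $x \in (-1,1)$ rather than at the sample points: the classical contour-integral bound for $|g(t) - \sum_j g(jh) S(j,h)(t/h)|$ naturally involves a factor like $1/(\cosh(\pi t/h)\,\text{[boundary distance]})$ or the quantity $|\sin(\pi t /h)|$, and one must check that taking the supremum over real $t$ (equivalently over $x\in(-1,1)$) does not worsen the $\exp(-\pi d/h)$ rate. The careful bookkeeping of the constants and the verification that the polynomial prefactor is no worse than $\sqrt{N}$ — both in the aliasing term and in the truncated-tail term — is the delicate part; the exponent-balancing at the end is routine once the two pieces are in hand. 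Everything else reduces to known properties of the conformal map $\psi$ and of the cardinal sine function, which I would cite rather than rederive.
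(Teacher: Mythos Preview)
The paper does not give its own proof of this statement: Theorem~\ref{thm:sinc_err_upper_bound} is quoted as a result of Sugihara~\cite{bib:Sugihara2003_OptSinc} and is not re-derived here. Your sketch --- pull back to the strip $\mathcal{D}_d$ via $\psi$, split into an aliasing error of order $\ee^{-\pi d/h}$ and a truncation error of order $\ee^{-(\mu/2)Nh}$, then balance with $h=\sqrt{2\pi d/(\mu N)}$ --- is precisely the argument used in the cited reference (and in Stenger's monographs), so there is nothing further to compare.

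One small clarification on the point you flagged as uncertain: the $\sqrt{N}$ prefactor does not come from the sinc envelope but from the geometric-tail constant $1/(1-\ee^{-(\mu/2)h})$ in the truncation estimate; with $h\sim c/\sqrt{N}$ this is $\sim (2/\mu)\,h^{-1}\sim C\sqrt{N}$. Once that is inserted, your balancing computation gives exactly the stated bound.
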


From Theorem~\ref{thm:min_err_lower_bound} with $n = 2N+1$ and Theorem~\ref{thm:sinc_err_upper_bound}, 
we have
\begin{align}
c''\, \exp \left( -\sqrt{\pi d \mu N} \right)
& \leq 
E_{2N+1}^{\mathrm{min}}(\boldsymbol{H}^{\infty}(\varLambda_{d}, w_{\mu})) \label{eq:NearOptSinc} \\
& \leq 
E_{2N+1}^{\mathrm{sinc}}(\boldsymbol{H}^{\infty}(\varLambda_{d}, w_{\mu}))
\leq 
c'\, \sqrt{N} \exp\left( -\sqrt{\pi d \mu N / 2} \right) \notag
\end{align}
for some positive numbers $c''$ and $c'$ independent of $N$, 
which gives an estimate of the order of the minimum error norm 
$E_{2N+1}^{\mathrm{min}}(\boldsymbol{H}^{\infty}(\varLambda_{d}, w_{\mu}))$
with respect to $N$. 

Recently, 
the exact order of the minimum error norm 
is revealed by an explicit approximation formula 
as shown in the next subsection. 

%in~\eqref{eq:old_formula} below, }

%---------
\subsection{Optimal formula}

Ushima et al.~\cite{bib:UTOS_Ganelius_JSIAM} have found out an explicit approximation formula 
that achieves the exact order of the minimum error norm 
for $\mu$ with $0 < \mu < \min \{ 2, \pi/d \}$ by using the modified Ganelius sampling points proposed in~{\cite[Lemma 1]{bib:JangHaber2001}}. 
Furthermore, they have shown that
\begin{align}
\label{eq:exact_min_err_norm}
E_{2N}^{\mathrm{min}}(\boldsymbol{H}^{\infty}(\varLambda_{d}, w_{\mu}))
%\asymp
\leq
C \exp \left( -\sqrt{\pi d \mu N} \right)
\end{align}
%where the symbol $\asymp$ means that the asymptotic orders of both sides coincide.
for a positive number $C$ independent of $N$, and that the RHS in \eqref{eq:exact_min_err_norm}
gives the exact order of $E_{2N}^{\mathrm{min}}(\boldsymbol{H}^{\infty}(\varLambda_{d}, w_{\mu}))$
by combining this inequality and Theorem~\ref{thm:min_err_lower_bound} with $n = 2N$. 
In order to show the proposed formula in~\cite{bib:UTOS_Ganelius_JSIAM}, 
we describe the definition of the modified Ganelius sampling points
and a fundamental inequality relating to them, 
which plays an important role for the error estimate of the formula. 

\begin{defn}
\label{defn:Ganelius_points}
Let $r$ be a positive real number and let $N$ be a positive integer. 
Furthermore, let $N_{0}$ be defined by 
\begin{align}
\notag
N_{0} := N - \left \lceil \frac{\pi}{4} \sqrt{N r} \right \rceil, 
\end{align}
and let $\varphi$ be the function defined by 
\begin{align}
\notag
\varphi(x) := \exp \left( \pi \sqrt{\frac{x}{r}} \right)
\end{align}
for a positive number $x$. 
Then, the numbers $a_{k}$ defined by 
\begin{align}
\notag
a_{k} :=
\begin{cases}
\varphi(k-1)/\varphi(N_{0}) & (k = 1,2,\ldots, N_{0}), \\
\varphi(k-3/2)/\varphi(N_{0}) & (k = N_{0}+1), \\
1 - \frac{k-N_{0}-1}{5(N-N_{0}-1)} & (k = N_{0}+2, \ldots, N)
\end{cases}
\end{align}
are called the modified Ganelius sampling points.
\end{defn}

\begin{thm}[{\cite[Lemma 1]{bib:JangHaber2001}}]
\label{thm:OldGan}
Let $r$ be a positive real number satisfying $r < 1$ and let $N$ be a positive integer. 
Furthermore, let $\{ a_{k} \}$ be the sequence of the modified Ganelius sampling points given by Definition~\ref{defn:Ganelius_points}. Then, 
\begin{align}
\max_{s \in [0,1]} s^{r} \prod_{k = 1}^{N} \left| \frac{s-a_{k}}{s+a_{k}} \right| \leq C \exp \left( - \pi \sqrt{N r} \right)
\label{eq:old_mod_Gan}
\end{align}
holds, where $C$ is a positive number independent of $N$. 
\end{thm}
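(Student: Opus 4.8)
The statement to be proved is Theorem~\ref{thm:OldGan}, the classical Ganelius-type bound for the modified sampling points under the assumption $r<1$. Let me sketch a proof plan.

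The plan is to bound the quantity
\[
B_N(s) := s^{r} \prod_{k=1}^{N}\left|\frac{s-a_k}{s+a_k}\right|
\]
uniformly in $s\in[0,1]$ by taking logarithms and converting the product into a sum that is then compared with an integral. First I would set $g(s) := r\log s + \sum_{k=1}^{N}\log\left|\frac{s-a_k}{s+a_k}\right|$; since each factor $\left|\frac{s-a_k}{s+a_k}\right|$ is at most $1$ for $s,a_k\ge 0$, the only danger of $g$ being large (close to $0$, or positive) comes from the region where $s$ is small or where $s$ sits between consecutive sampling points. The key structural fact is that the points $a_1,\dots,a_{N_0}$ are the images under $\varphi(x)=\exp(\pi\sqrt{x/r})$ of the integers $0,1,\dots,N_0-1$, rescaled so that $a_{N_0}=1$; equivalently, writing $s = \varphi(\xi)/\varphi(N_0)$ identifies $s\in(0,1]$ with $\xi\in(-\infty,N_0]$, and $a_k$ corresponds to $\xi = k-1$. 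Under this change of variables the ``distance'' $\log\left|\frac{s-a_k}{s+a_k}\right|$ becomes (up to controllable error) a function of $\xi-(k-1)$ that decays geometrically away from the diagonal, so the sum $\sum_k \log\left|\frac{s-a_k}{s+a_k}\right|$ is well approximated by $\int_0^{N_0}\log\left|\frac{\varphi(\xi)-\varphi(t)}{\varphi(\xi)+\varphi(t)}\right|\,\dd t$ plus a bounded correction.

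Next I would evaluate (or bound) that integral together with the term $r\log s = r\log(\varphi(\xi)/\varphi(N_0)) = \pi\sqrt{r}\,(\sqrt{\xi}-\sqrt{N_0})$. The upshot of the computation — this is where the square-root spacing of the Ganelius points is designed to pay off — is that the worst case of $g$ over the ``exponential'' block $\xi\in(-\infty,N_0]$ is $O(1) - \pi\sqrt{Nr}$, because the gain $\pi\sqrt{r}(\sqrt{\xi}-\sqrt{N_0})$ from the $s^r$ factor is, near $\xi=N_0$, balanced against the contribution of the $N_0$ factors $\left|\frac{s-a_k}{s+a_k}\right|$, each of which is bounded away from $1$ there, and $N - N_0 = \lceil \tfrac{\pi}{4}\sqrt{Nr}\rceil$. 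Then I would handle the remaining block of points $a_{N_0+2},\dots,a_N$, which are equally spaced in $[4/5,1)$: for $s$ in that block the factors coming from the exponential points are already exponentially small, and for $s$ near $1$ the $\lceil\tfrac{\pi}{4}\sqrt{Nr}\rceil$ equally spaced points contribute a factor that, by a standard estimate for $\prod|\frac{s-a_k}{s+a_k}|$ over an arithmetic progression, is bounded by $\exp(-c\sqrt{Nr})$ for some $c>0$; combined with the exponential points this again gives $\exp(-\pi\sqrt{Nr})$ up to a constant. The extra point $a_{N_0+1}=\varphi(N_0-1/2)/\varphi(N_0)$ is a single harmless factor of modulus $\le 1$ inserted to smooth the transition.

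The main obstacle is the careful bookkeeping in the middle step: controlling the error between the sum $\sum_{k=1}^{N_0}\log\left|\frac{s-a_k}{s+a_k}\right|$ and the integral uniformly in $s$, especially when $s$ is very close to one of the $a_k$ (where an individual term is near $-\infty$ but, crucially, is still integrable against $\dd t$ so the \emph{sum} minus that one term is what must be compared to the integral), and simultaneously keeping the constants independent of $N$. This is exactly the delicate case analysis carried out by Ganelius and by Jang--Haber, and it is precisely here that the hypothesis $r<1$ enters: it guarantees $N_0 \ge 1$ for all $N$ and, more importantly, that the gap between the last exponential point and $1$ is wide enough (relative to the spacing $\varphi(N_0)/\varphi(N_0-1)$) for the comparison estimates to close with a uniform constant. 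Since this is quoted from \cite[Lemma~1]{bib:JangHaber2001}, I would present the proof by reducing to their argument, recording explicitly the two estimates — the sum-to-integral comparison over the exponential block and the arithmetic-progression estimate over the terminal block — and then combining them, rather than reproducing every inequality.
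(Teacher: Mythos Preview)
Your plan follows essentially the same decomposition as the paper's Appendix~\ref{sec:proof_of_generalGan} (which proves the generalisation Theorem~\ref{thm:GenGan} and hence subsumes Theorem~\ref{thm:OldGan}): take logarithms, split the sum into the exponential block $k\le N_0+1$ and the arithmetic block $k\ge N_0+2$, and control the exponential block by comparing it with the integral $\int_{1/\varphi(N_0)}^{1}\log\bigl|\tfrac{s+t}{s-t}\bigr|\,\tfrac{2r\log(t\,\varphi(N_0))}{\pi^{2}t}\,\dd t$ plus a bounded remainder. You are also right that the hypothesis $r<1$ is used only in the sum-versus-integral comparison (the paper's Lemma~\ref{lem:cont_vs_disc}, corresponding to inequality~(3) in Jang--Haber).

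The one place your description misreads the mechanism is the role of the arithmetic block. In the actual argument (Lemmas~\ref{lem:disc_plus} and~\ref{lem:cont_integral}) the integral alone delivers
\[
\pi\sqrt{N_0 r}+r\log s-\tfrac{\pi}{2}\sqrt{N_0 r}\,s+C'',
\]
with an unavoidable linear-in-$s$ deficit $-\tfrac{\pi}{2}\sqrt{N_0 r}\,s$ that is worst at $s=1$. The $N-N_0=\lceil\tfrac{\pi}{4}\sqrt{Nr}\rceil$ equispaced points on $[4/5,1)$ are not there to supply an independent $\exp(-c\sqrt{Nr})$ factor as you suggest; they are calibrated so that their log-contribution is \emph{at least} $\tfrac{\pi}{2}\sqrt{N_0 r}\,s$ (Lemma~\ref{lem:disc_plus}), cancelling that deficit exactly. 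This matching is what forces the sharp constant $\pi$ in the exponent --- your version would only give some $c<\pi$ near $s=1$. Once you correct that, your sketch and the paper's proof coincide.
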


The sequence $\{ a_{k} \}$ given by Definition~\ref{defn:Ganelius_points} 
is contained in the interval $(0,1)$. 
We need to transform this sequence to that on the interval $(-1,1)$ for the approximation formula on $(-1,1)$. 
Let $b_{k}$ be defined by
\begin{align}
\label{eq:b_k}
b_{k} := \sqrt{\frac{1 - a_{k}}{1 + a_{k}}}, \qquad 
b_{-k} := -b_{k} \qquad (k = 1,2,\ldots, N)
\end{align}
and let $\beta_{k}$ be defined by 
\begin{align}
\label{eq:beta_k}
\beta_{k} := \tanh \left( \frac{2 d}{\pi} \arctanh b_{k} \right)
\qquad (k = \pm 1, \ldots, \pm N). 
\end{align}
Both of the sequences $\{ b_{k} \}$ and $\{ \beta_{k} \}$ are contained in $(-1,1)$. 
Furthermore, 
we define $\sigma_{k}$ by
\begin{align}
\sigma_{k} :=
\pprod_{\begin{subarray}{c}
\ell=-N \\
\ell \neq k
\end{subarray}}^{N} \frac{1 - b_{\ell}b_{k}}{b_{k} - b_{\ell}}
\end{align}
for the coefficients of the formula,
where the symbol $'$ of the product symbol means exclusion of $k = 0$. 
We also use the same symbol for the summation symbol. 
Finally, in order to construct basis functions for the formula, 
we define a function $B_{N}(z; \boldsymbol{\beta}, d)$ by
\begin{align}
\label{eq:GenBlaschke}
B_{N}(z; \boldsymbol{\beta}, d)
:=
\pprod_{k=-N}^{N} \tanh \left[ \frac{\pi}{2d} (\arctanh z - \arctanh \beta_{k}) \right].
\end{align}

\begin{rem}
\label{rem:special_case_Blaschke}
After some algebra, 
we can obtain the expression 
\begin{align}
\notag
B_{N}(z; \boldsymbol{\beta}, d)
=
\pprod_{k=-N}^{N} 
\frac{ [(1-\beta_{k})(1+z)]^{\pi/(2d)} - [(1+\beta_{k})(1-z)]^{\pi/(2d)} }
{ [(1-\beta_{k})(1+z)]^{\pi/(2d)} + [(1+\beta_{k})(1-z)]^{\pi/(2d)} }. 
\end{align}
Therefore, if $d = \pi/(2m)$ for a positive integer $m$, 
the function $B_{N}(z; \boldsymbol{\beta}, d)$ is a rational function. 
In particular, if $d = \pi/2$, we have $\beta_{k} = b_{k}$ and 
\begin{align}
\notag
B_{N}(z; \boldsymbol{\beta}, \pi/2)
=
\pprod_{k=-N}^{N} \frac{z - b_{k}}{1 - b_{k} z},
\end{align}
which is known as the Blaschke product. 
Therefore, the function given by~\eqref{eq:GenBlaschke} is its generalization. 
\end{rem}

By using the sequences and function defined above, 
Ushima et al.~\cite{bib:UTOS_Ganelius_JSIAM} have proposed the approximation formula $\tilde{f}_{N}$ given by
\begin{align}
\label{eq:old_formula}
f(x) \approx 
\tilde{f}_{N}(x) := 
\psum_{k = -N}^{N}
f(\beta_{k}) \, 
\frac{2d \sigma_{k}}{\pi} \, 
\frac{(1-x^{2})\, B_{N}(x; \boldsymbol{\beta},  d)}{x - \beta_{k}}.
\end{align}
Then, by using Theorem~\ref{thm:OldGan}, 
they have given its error estimate as follows.

\begin{thm}[{\cite{bib:UTOS_Ganelius_JSIAM}}]
\label{thm:UTOS_Ganelius_JSIAM}
Let $\mu$ be a real number satisfying $0 < \mu < \min \{ 2, \pi/d \}$. 
Then, for $\tilde{f}_{N}$ given by~\eqref{eq:old_formula}, we have 
\begin{align}
\sup_{
\begin{subarray}{c}
f \in \boldsymbol{H}^{\infty}(\mathcal{D}_{d}, w_{\mu}) \\
\| f \| \leq 1
\end{subarray}}
\left(
\sup_{x \in (-1,1)}
\left|
f(x) - \tilde{f}_{N}(x)
\right|
\right)
\leq
C \exp\left(
- \sqrt{\pi d \mu N}
\right),
\end{align}
where $C$ is a positive number independent of $N$. 
\end{thm}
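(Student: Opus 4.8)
The plan is to estimate the supremum error of the formula $\tilde{f}_N$ by splitting it into a product of two factors, one of which is controlled by the weight $w_\mu$ and the other by a Blaschke-type product. First I would recognize that $\tilde{f}_N$ is an interpolation formula: it is designed so that $\tilde{f}_N(\beta_k) = f(\beta_k)$, and the interpolation error admits a contour-integral representation over $\partial\varLambda_d$, in which the numerator of the integrand carries a factor $B_N(z;\boldsymbol{\beta},d)/B_N(x;\boldsymbol{\beta},d)$ coming from the nodes. The key point is that $|B_N|$ is bounded by $1$ on $\varLambda_d$ (it is a product of $\tanh$-type factors each of modulus at most $1$ there), so the decay must come entirely from the factor $1/|B_N(x;\boldsymbol{\beta},d)|$ — wait, that has the wrong sign — so instead the decay comes from combining $|f(z)/w_\mu(z)|\le 1$ with the smallness of $|w_\mu(x)\, B_N(x;\boldsymbol{\beta},d)|$ along the real interval together with the smallness of the residual contour integral. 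In practice I would follow the structure announced in the introduction: reduce Theorem~\ref{thm:UTOS_Ganelius_JSIAM} to the conjunction of three lemmas (as the paper says it does for the general version), the decisive one being an analogue of Lemma~\ref{lem:from_gen_Ganelius}, which bounds $\sup_{x\in(-1,1)} |w_\mu(x)\, B_N(x;\boldsymbol{\beta},d)|$ in terms of the Ganelius estimate.

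Second, I would carry out the change of variables $\zeta = \arctanh z$ (equivalently $z=\tanh(\zeta/2)$ with the conformal map sending $\varLambda_d$ to the strip $\mathcal{D}_d$), under which $B_N$ becomes a product of $\tanh[\frac{\pi}{2d}(\zeta - \arctanh\beta_k)]$ and the weight $w_\mu$ transforms in a tractable way. Then a further substitution relating the $\beta_k$ to the original Ganelius points $a_k$ via \eqref{eq:b_k}–\eqref{eq:beta_k} should convert the quantity $|w_\mu(x) B_N(x;\boldsymbol{\beta},d)|$, after maximizing over $x$, into exactly an expression of the form $s^{r}\prod_{k=1}^N |(s-a_k)/(s+a_k)|$ with the correspondence $r = d\mu/\pi$ — this is why the constant $\pi d\mu$ appears in the exponent, since Theorem~\ref{thm:OldGan} gives $\exp(-\pi\sqrt{Nr}) = \exp(-\sqrt{\pi d\mu N})$. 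The restriction $\mu < \min\{2,\pi/d\}$ enters precisely here: $r<1$ is needed for Theorem~\ref{thm:OldGan}, which forces $d\mu/\pi < 1$, i.e. $\mu < \pi/d$, and the bound $\mu<2$ comes in separately from controlling the behavior near the endpoints (the factor $(1-x^2)$ in \eqref{eq:old_formula} versus $(1-x^2)^{\mu/2}$ in the weight, so one needs $\mu/2 < 1$ for the auxiliary estimates to close).

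Third, I would handle the two remaining ingredients: a bound on the ``tail'' contribution (the discrepancy between the true contour integral and the finite interpolation sum, or equivalently a bound showing that the basis functions $\frac{2d\sigma_k}{\pi}\frac{(1-x^2)B_N(x)}{x-\beta_k}$ reproduce the relevant class up to a small defect), and a uniform bound on $\sum_k |\sigma_k|$ or the Lebesgue-type constant of the interpolation, which controls how the pointwise node estimate propagates to the supremum over $x\in(-1,1)$. The $\sigma_k$ are Lagrange-type coefficients for the points $b_k$ with respect to the kernel $(1-b_\ell b_k)/(b_k-b_\ell)$, and I would expect a known estimate (from the sinc/Blaschke literature, or provable by a direct Cauchy-integral argument) giving $\sum_k|\sigma_k| \le C$ or at worst $C\sqrt N$, which only affects the polynomial prefactor and not the exponential rate. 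Assembling: error $\le (\text{Lebesgue constant}) \times \sup_x |w_\mu(x) B_N(x;\boldsymbol{\beta},d)| \le C\exp(-\sqrt{\pi d\mu N})$.

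The main obstacle, I expect, is the second step — transporting the real-line maximum of $|w_\mu(x)\,B_N(x;\boldsymbol{\beta},d)|$ into the exact Ganelius form \eqref{eq:old_mod_Gan}. The double transformation $x\mapsto b\mapsto a$ is not an isometry, and one has to check that the power $\mu/2$ on the weight, the factor $(1-x^2)$, and the distortion introduced by \eqref{eq:beta_k} all combine so that the exponent in the Ganelius estimate matches $d\mu/\pi$ rather than something weaker; moreover one must verify that the maximum over $x\in(-1,1)$ on the transformed side corresponds to a maximum over $s\in[0,1]$ (not a larger set) so that Theorem~\ref{thm:OldGan} applies verbatim. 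A secondary difficulty is making sure the contour-integral representation of the interpolation error is valid despite the endpoint singularities of $f$: one needs $f/w_\mu$ bounded to justify deforming the contour onto $\partial\varLambda_d$ and to see that the contributions near $z=\pm1$ vanish, which is exactly where $\mu>0$ and $\mu<2$ are used.
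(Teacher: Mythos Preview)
Your overall architecture---contour-integral representation of the interpolation error, plus reduction of the weighted Blaschke product on $(-1,1)$ to the Ganelius inequality with $r=d\mu/\pi$---matches what the paper does (it cites this theorem from \cite{bib:UTOS_Ganelius_JSIAM} and proves the generalization, Theorem~\ref{thm:general-Ganelius-approximation}, by exactly this scheme, specialized here to $\nu=1$). Your identification of where the two restrictions enter is also correct: $\mu<\pi/d$ is $r<1$ in Theorem~\ref{thm:OldGan}, and $\mu<2$ is needed so that $\nu=1$ lies in the admissible range $(\mu/2,\mu/2+1)$ governing the integrability near $z=\pm1$.

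The gap is in your third step. You reach for a Lebesgue-constant bound on $\sum_k|\sigma_k|$ and a separate ``tail'' term, but neither is needed and this obscures the actual mechanism. The residue computation already gives the error \emph{exactly} as a product:
\[
f(x)-\tilde f_N(x)=(1-x^2)\,B_N(x;\boldsymbol\beta,d)\cdot\frac{1}{2\pi i}\oint_{\partial\varLambda_d}\frac{1}{B_N(z;\boldsymbol\beta,d)}\,\frac{f(z)/(1-z^2)}{z-x}\,\dd z,
\]
so there is no interpolation remainder to control by operator norms---the $\sigma_k$ have already been absorbed. What you are missing is the middle lemma (the analogue of Lemma~\ref{lem:UB_step2}): on $\partial\varLambda_d$ one has $|B_N|=1$, and the remaining integral
\[
\oint_{\partial\varLambda_d}\left|\frac{(1-z^2)^{\mu/2-1}}{z-x}\right||\dd z|
\]
must be shown to be $\le C_1(1-x^2)^{\mu/2-1}$ uniformly in $x$. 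This is a genuine estimate (done in the paper by passing to the strip via $z=\tanh((s+di)/2)$ and bounding an integral of the form $\int \cosh^{-1/2}(s-t)\cosh^{-(\mu-1)/2}(s)\,\dd s$; here is where $\mu/2<1$ is actually used for convergence). Only after this does the error collapse to $C_1(1-x^2)^{\mu/2}|B_N(x;\boldsymbol\beta,d)|$, to which your Step~2 (the Ganelius reduction) applies. Without this integral bound your assembly does not close, and a Lebesgue-constant argument would at best reintroduce a spurious $\sqrt N$ factor.
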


%---------
\subsection{Contribution of this paper}

In Theorem~\ref{thm:UTOS_Ganelius_JSIAM}, 
the assumption $\mu < \min \{ 2, \pi/d \}$ is originated from 
the form of Formula~\eqref{eq:old_formula} and the assumption $r < 1$ in Theorem~\ref{thm:OldGan}. 
In this paper, 
by generalizing Formula~\eqref{eq:old_formula}, 
we propose a new approximation formula in $\boldsymbol{H}^{\infty}(\mathcal{D}_{d}, w_{\mu})$ for any $\mu > 0$
and generalize Theorem~\ref{thm:OldGan} by removing the assumption $r < 1$. 
Then, 
we give the error estimate of the new formula 
by a generalization of Theorem~\ref{thm:UTOS_Ganelius_JSIAM}, 
in which the assumption $\mu < \min \{ 2, \pi/d \}$ is removed. 
As shown below, 
the generalized versions of 
Formula~\eqref{eq:old_formula}, 
Theorem~\ref{thm:OldGan}, and
Theorem~\ref{thm:UTOS_Ganelius_JSIAM} 
are 
Formula~\eqref{eq:main_formula}, 
Theorem~\ref{thm:GenGan}, and 
Theorem~\ref{thm:general-Ganelius-approximation}, respectively.

\section{An approximation formula by the Ganelius sampling points and generalized Blaschke product}
\label{sec:approx_formula}
\subsection{Main result}

For a function $f \in \boldsymbol{H}^{\infty}(\varLambda_{d}, w_{\mu})$, 
by using a real number $\nu$ with 
\begin{align}
\mu/2 < \nu < \mu/2 + 1,
\label{eq:range_of_nu}
\end{align} 
we propose the approximation formula $\tilde{f}_{\nu, N}(x)$ given by
\begin{align}
\label{eq:main_formula}
f(x) \approx
\tilde{f}_{\nu, N}(x) := 
\psum_{k = -N}^{N}
f(\beta_{k}) \, 
\frac{2d \sigma_{k}}{\pi} \, 
\frac{(1-x^{2})^{\nu}}{(1-\beta_{k}^{2})^{\nu - 1}} \, 
\frac{B_{N}(x; \boldsymbol{\beta},  d)}{x - \beta_{k}}. 
\end{align}

\begin{rem}
In the case that $\mu < 2$, by choosing $\nu = 1$, 
we can obtain Formula~\eqref{eq:old_formula} from Formula~\eqref{eq:main_formula}. 
\end{rem}

\begin{rem}
According to~\eqref{eq:range_of_nu}, 
we can set $\nu = \lceil \mu/2 \rceil$ if $\mu$ is not an even integer.  
From this fact and Remark~\ref{rem:special_case_Blaschke}, 
in the case that $\mu$ is not an even integer and $d = \pi/(2m)$ for a positive integer $m$, 
the approximant $\tilde{f}_{\nu, N}(x)$ becomes a rational function by letting $\nu = \lceil \mu/2 \rceil$. 
In such a case, it may be better to use the rational approximant from a practical point of view.
\end{rem}

In the following, 
we give an upper bound of the error of Formula~\eqref{eq:main_formula} and 
show its optimality by the fact 
that the upper bound has the same order as the lower bound of  
$E_{2N}^{\mathrm{min}}(\boldsymbol{H}^{\infty}(\varLambda_{d}, w_{\mu}))$ 
given by~\eqref{eq:min_err_lower_bound}. 
The upper bound is given by the following theorem. 

\begin{thm}
\label{thm:general-Ganelius-approximation}
Let $d$ be a positive number satisfying $0 < d < \pi$,
and let $\mu$ and $\nu$ be positive numbers satisfying~\eqref{eq:range_of_nu}. 
Then, for $\tilde{f}_{\nu, N}$ given by~\eqref{eq:main_formula}, we have
\begin{align}
\sup_{
\begin{subarray}{c}
f \in \boldsymbol{H}^{\infty}(\varLambda_{d}, w_{\mu}) \\
\| f \| \leq 1
\end{subarray}
}
\left(
\sup_{x \in (-1,1)}
\left|
f(x) - \tilde{f}_{\nu, N}(x)
\right|
\right)
\leq
C \exp \left( - \sqrt{\pi d \mu N} \right), 
\label{eq:general-Ganelius-error}
\end{align}
where $C$ is a positive number independent of $N$. 
\end{thm}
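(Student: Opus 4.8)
The plan is to reduce the theorem to three lemmas, as the authors themselves announce, and I will organize the proof around that skeleton. First I would perform a change of variables moving everything from the eye-shaped region $\varLambda_d$ to the strip $\mathcal{D}_d$ via $z = \tanh(\zeta/2)$, since the sampling points $\beta_k$ were constructed precisely by transplanting the Ganelius points $a_k \in (0,1)$ first to $b_k \in (-1,1)$ and then to $(-1,1)$ through the map with parameter $2d/\pi$. Under this transplantation the weight $w_\mu(z) = (1-z^2)^{\mu/2}$ becomes, up to bounded factors, something like $(\cosh(\zeta/2))^{-\mu}$ on the strip, and the generalized Blaschke product $B_N(z;\boldsymbol\beta,d)$ becomes an honest product of hyperbolic tangents whose zeros sit at the transplanted Ganelius points. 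The point of the exponent $\nu$ with $\mu/2 < \nu < \mu/2 + 1$ is to split $w_\mu = w_{2\nu} \cdot (1-z^2)^{(\mu - 2\nu)/2}$ where the second factor has a \emph{negative} exponent between $-1$ and $0$; this is exactly the parameter $r = \nu - \mu/2 \in (0,1)$ that would be needed in the old Ganelius theorem, and the generalization (Theorem~\ref{thm:GenGan}) is what lets us instead take $r = \mu/2 - \nu + \text{something}$ unrestricted, or rather handle the case where the naive bookkeeping would force $r \geq 1$.

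The three lemmas I expect are, roughly: (1) an interpolation/error-representation lemma showing that $f(x) - \tilde f_{\nu,N}(x)$ equals a contour integral of $f(z)$ against a kernel built from $B_N$, i.e. that $\tilde f_{\nu,N}$ is the interpolant of $f$ at the nodes $\beta_k$ with respect to the basis determined by $(1-x^2)^\nu B_N(x)/(x-\beta_k)$, together with a Hermite-type remainder formula; (2) a bound on the kernel along the boundary $\partial\varLambda_d$ — controlling $|B_N(z;\boldsymbol\beta,d)|$ and the factor $(1-z^2)^\nu/(x-z)$ uniformly for $x \in (-1,1)$ and $z \in \partial\varLambda_d$, which feeds the $\sup_x$ estimate; and (3) the crucial size estimate reducing to Theorem~\ref{thm:GenGan}, namely that $\max_{s\in[0,1]} s^r \prod_{k=1}^N |(s-a_k)/(s+a_k)| \leq C\exp(-\pi\sqrt{Nr})$ now holds \emph{without} $r < 1$. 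I would carry these out in that order: establish the exact remainder formula by a residue calculation (the poles of $1/(x-\beta_k)$ are cancelled by zeros of $B_N$, leaving a clean integral over $\partial\varLambda_d$), then estimate the integrand using $|f(z)| \leq \|f\| |w_\mu(z)|$ and the decomposition of $w_\mu$ into $w_{2\nu}$ (absorbed by the $(1-z^2)^\nu$ in the denominator-free part) times the singular factor of exponent $r - $ wait, exponent $\mu - 2\nu < 0$, whose effect on the Ganelius product is precisely what Theorem~\ref{thm:GenGan} governs after transplanting back to $[0,1]$.

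The main obstacle, as the authors flag, is removing the hypothesis $r < 1$ in the Ganelius estimate — but since Theorem~\ref{thm:GenGan} is stated as available, the real work left inside this proof is the \emph{plumbing}: verifying that the contour integral over $\partial\varLambda_d$ converges despite the $\nu > \mu/2$ choice making $(1-z^2)^\nu f(z)/w_\mu(z)$ vanish at $\pm 1$ (so integrability near the endpoints is fine) while $\nu < \mu/2 + 1$ keeps the growth of $(1-z^2)^\nu$ from overwhelming things, and then tracking how the optimal choice of the free parameter $r$ (which will be tied to $N$, something like $r \sim \pi d\mu/(\text{const})$ so that $\pi\sqrt{Nr}$ matches $\sqrt{\pi d\mu N}$) propagates through. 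I would also need to check that the $2N$ nodes $\beta_{\pm k}$ together with the structure of $B_N$ genuinely give a valid $n$-point formula in the sense of \eqref{eq:approx_general} with $n = 2N$, so that the upper bound legitimately bounds $E_{2N}^{\mathrm{min}}$; combined with Theorem~\ref{thm:min_err_lower_bound} this then yields optimality. The estimate \eqref{eq:general-Ganelius-error} itself follows by collecting: (boundary length) $\times$ (kernel bound) $\times$ (Ganelius product bound) $= \mathrm{O}(\exp(-\pi\sqrt{Nr}))$ with $r$ chosen to make the exponent equal $\sqrt{\pi d\mu N}$.
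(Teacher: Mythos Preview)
Your three-lemma architecture matches the paper exactly: a contour-integral/residue representation of the error (Lemma~\ref{lem:UB_step1}), a boundary-integral estimate (Lemma~\ref{lem:UB_step2}), and the reduction to the generalized Ganelius bound (Lemma~\ref{lem:from_gen_Ganelius}). But your bookkeeping of the parameters is tangled in a way that would derail the actual computations.

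The parameter $r$ in Theorem~\ref{thm:GenGan} is \emph{not} $\nu - \mu/2$; it is $r = d\mu/\pi$, fixed independently of $N$ and of $\nu$, and it is the value already baked into the sampling points $a_k$ in Definition~\ref{defn:Ganelius_points}. With this value one has $\pi\sqrt{Nr} = \sqrt{\pi d\mu N}$ automatically --- there is no optimization over $r$ to perform and nothing ``tied to $N$''. The old hypothesis $r < 1$ in Theorem~\ref{thm:OldGan} is what forced $\mu < \pi/d$; lifting it via Theorem~\ref{thm:GenGan} removes that half of the restriction. The other half, $\mu < 2$, came from the old formula~\eqref{eq:old_formula} being the special case $\nu = 1$, and is removed by allowing general $\nu$.

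The window $\mu/2 < \nu < \mu/2 + 1$ has nothing to do with the Ganelius exponent. Its role is to make the boundary integral
\[
\oint_{\partial\varLambda_d} \left| \frac{(1-z^2)^{\mu/2 - \nu}}{z - x} \right| \, |\dd z|
\]
in Lemma~\ref{lem:UB_step2} finite and, crucially, bounded by $C_1(1-x^2)^{\mu/2-\nu}$ uniformly in $x$. After the substitution $z = \tanh((s+d\,\ii)/2)$, $x = \tanh(t/2)$, this becomes an integral of the shape $\int_{\mathbf R} \cosh^{-1/2}(s-t)\,\cosh^{-(\mu-2\nu+1)/2}(s)\,\dd s$, handled by Lemma~\ref{lem:coshcosh}; the exponent $\mu/2 - \nu \in (-1,0)$ is exactly what makes this convolution finite. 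Your proposed ``(boundary length) $\times$ (kernel bound)'' does not work here: $\partial\varLambda_d$ has infinite arclength and the integrand is not uniformly bounded near $\pm 1$. The paper instead extracts the factor $(1-x^2)^{\mu/2-\nu}$ so that it recombines with the $(1-x^2)^\nu$ sitting in front of $B_N$ to produce $(1-x^2)^{\mu/2}|B_N(x;\boldsymbol\beta,d)|$, and \emph{that} is the quantity to which the Ganelius bound of Lemma~\ref{lem:from_gen_Ganelius} (with $r = d\mu/\pi$) applies.
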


%---------
\subsection{Sketch of the proof of Theorem~\ref{thm:general-Ganelius-approximation}}

Theorem~\ref{thm:general-Ganelius-approximation} 
follows from Lemmas~\ref{lem:UB_step1}--\ref{lem:from_gen_Ganelius} below, 
whose proofs are shown in Section~\ref{sec:proof_of_main_steps}. 
In order to state the lemmas, 
for a nonnegative real number $\delta$, 
we define $\varLambda_d(\delta)$ by
\begin{align}
\varLambda_d(\delta) := \varLambda_{d} \cap \left\{ z \left| \inf_{\zeta \in \partial \Lambda_{d}} |z - \zeta| > \delta \right. \right\}, 
\end{align}
and set the counterclockwise direction to its boundary $\partial\varLambda_d(\delta)$. 

\begin{lem}
\label{lem:UB_step1}
We have
\begin{align}
\label{eq:UB_step1}
& \sup_{
\begin{subarray}{c}
f \in \boldsymbol{H}^{\infty}(\varLambda_{d}, w_{\mu}) \\
\| f \| \leq 1
\end{subarray}
}
\left(
\sup_{x \in (-1,1)}
\left|
f(x) - \tilde{f}_{\nu, N}(x)
\right|
\right) \\
& \leq 
\sup_{x \in (-1,1)} \frac{1}{2\pi} \, (1-x^2)^{\nu} \, | B_n (x ; \boldsymbol{\beta}, d) | 
\, \lim_{\delta \to +0}
\oint_{\partial \varLambda_d(\delta)} \left | \frac{(1-z^2)^{\mu/2-\nu}}{z-x} \right | |\mathrm{d}z|.
\notag
\end{align}
\end{lem}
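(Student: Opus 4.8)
The plan is to express the approximation error as a contour integral over $\partial\varLambda_d(\delta)$ and then let $\delta\to+0$. First I would observe that the approximant $\tilde f_{\nu,N}(x)$ in \eqref{eq:main_formula} is built from the interpolation basis for the nodes $\{\beta_k\}$ together with the factor $(1-x^2)^\nu B_N(x;\boldsymbol{\beta},d)$, which vanishes at all the $\beta_k$. Consequently, for each fixed $x\in(-1,1)$, the function
\begin{align}
\notag
z \longmapsto \frac{1}{2\pi\ii}\,\frac{f(z)}{z-x}\cdot\frac{(1-x^2)^{\nu}\,B_N(x;\boldsymbol{\beta},d)}{(1-z^2)^{\nu}\,B_N(z;\boldsymbol{\beta},d)}
\end{align}
has simple poles only at $z=x$ and at $z=\beta_k$ ($k=\pm1,\dots,\pm N$); the residue at $z=x$ is $f(x)$, and the residues at the $\beta_k$ assemble exactly into $-\tilde f_{\nu,N}(x)$. (Here one checks that $\nu>\mu/2$ makes $(1-z^2)^{\nu}$ cancel the singularity of $f/w_\mu$ at the endpoints so that the integrand is integrable near $\pm1$, while $\nu<\mu/2+1$ keeps $(1-z^2)^{\nu-1}$ from introducing spurious zeros in the coefficients; this is where the range \eqref{eq:range_of_nu} enters.) Applying the residue theorem on $\varLambda_d(\delta)$, whose boundary avoids $\pm1$ for $\delta>0$ and contains $x$ and all $\beta_k$ for $\delta$ small, gives
\begin{align}
\notag
f(x)-\tilde f_{\nu,N}(x)
= \frac{(1-x^2)^{\nu}B_N(x;\boldsymbol{\beta},d)}{2\pi\ii}
\oint_{\partial\varLambda_d(\delta)} \frac{f(z)}{(1-z^2)^{\nu}B_N(z;\boldsymbol{\beta},d)}\,\frac{\dd z}{z-x}.
\end{align}

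Next I would bound the integrand. On $\partial\varLambda_d(\delta)$ we have $|B_N(z;\boldsymbol{\beta},d)|=1$ (the generalized Blaschke product has unit modulus on the boundary, by the same argument that gives this for the ordinary Blaschke product on the unit circle — each factor $\tanh[\tfrac{\pi}{2d}(\arctanh z-\arctanh\beta_k)]$ maps $\partial\varLambda_d$ to the unit circle since $\arctanh$ maps $\varLambda_d$ to $\mathcal D_d$ and $\tanh[\tfrac{\pi}{2d}\cdot]$ maps $\partial\mathcal D_d$ to the unit circle). Also $|f(z)|\le\|f\|\,|w_\mu(z)|=\|f\|\,|(1-z^2)^{\mu/2}|$. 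Hence
\begin{align}
\notag
|f(z)|\cdot\Big|\frac{1}{(1-z^2)^{\nu}B_N(z;\boldsymbol{\beta},d)}\Big|
\le \|f\|\,|(1-z^2)^{\mu/2-\nu}|
\end{align}
on $\partial\varLambda_d(\delta)$, so taking absolute values inside the integral, using $|B_N(x;\boldsymbol{\beta},d)|\le$ its value on $(-1,1)$, dividing by $\|f\|$ and taking the supremum over $\|f\|\le1$ and over $x$, then letting $\delta\to+0$, yields exactly \eqref{eq:UB_step1}.

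\textbf{Main obstacle.} The routine parts are the residue bookkeeping and the modulus-one property on the boundary; the real care is needed in justifying the limit $\delta\to+0$. One must check that the contour integral over $\partial\varLambda_d(\delta)$ converges, as $\delta\to+0$, to the (improper) integral over $\partial\varLambda_d$ appearing on the right of \eqref{eq:UB_step1}, i.e. that $\lim_{\delta\to+0}\oint_{\partial\varLambda_d(\delta)}|(1-z^2)^{\mu/2-\nu}/(z-x)|\,|\dd z|$ exists and equals that improper integral. Since $\mu/2-\nu\in(-1,0)$, the factor $|1-z^2|^{\mu/2-\nu}$ blows up like a negative power less than $1$ near $\pm1$, so the integral is convergent there; the key point is a uniform-integrability / dominated-convergence argument showing the truncated contours approximate the full boundary integral, together with the fact that for $\delta$ small enough all the nodes $\beta_k$ and the point $x$ indeed lie inside $\varLambda_d(\delta)$ so the residue computation is valid on every contour in the family. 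I would handle this by parametrizing $\partial\varLambda_d$ via $\zeta\mapsto\tanh(\zeta/2)$ with $\zeta$ on the two horizontal lines $\mathop{\mathrm{Im}}\zeta=\pm d$, transferring the estimate to the $\zeta$-plane where the boundary is a pair of straight lines and the integrand's decay at $\mathop{\mathrm{Re}}\zeta\to\pm\infty$ (exponential, coming from $|(1-z^2)^{\mu/2-\nu}|$ under $z=\tanh(\zeta/2)$) makes the convergence transparent.
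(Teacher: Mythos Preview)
Your approach matches the paper's: express $f(x)-\tilde f_{\nu,N}(x)$ as $(1-x^2)^\nu B_N(x;\boldsymbol\beta,d)$ times a contour integral over $\partial\varLambda_d(\delta)$ via the residue theorem, bound $|f(z)|\le |(1-z^2)^{\mu/2}|$ for $\|f\|\le1$, and pass to the limit $\delta\to+0$.

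One small correction: you assert $|B_N(z;\boldsymbol\beta,d)|=1$ on $\partial\varLambda_d(\delta)$, but this holds only on $\partial\varLambda_d$ itself (your own $\arctanh$ argument is for $\mathop{\mathrm{Im}}\zeta=\pm d$, i.e.\ the true boundary). For $\delta>0$ the contour lies strictly inside $\varLambda_d$, where $|B_N|<1$. The paper handles this by pulling out $\max_{z\in\partial\varLambda_d(\delta)}|1/B_N(z;\boldsymbol\beta,d)|$ from the integral and then showing, via exactly the $\zeta$-plane parametrization you describe, that this maximum tends to $1$ as $\delta\to+0$. This is a one-line fix, not a change of strategy.

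Your ``main obstacle'' paragraph about the existence and finiteness of $\lim_{\delta\to+0}\oint_{\partial\varLambda_d(\delta)}|(1-z^2)^{\mu/2-\nu}/(z-x)|\,|\dd z|$ goes beyond what the paper does in this lemma: the paper simply writes the limit in the statement and defers its finiteness to Lemma~\ref{lem:UB_step2}. Likewise, the range~\eqref{eq:range_of_nu} of $\nu$ plays no role in the present proof (on $\overline{\varLambda_d(\delta)}$ the factor $(1-z^2)^\nu$ is bounded away from zero regardless of $\nu$); it is only in Lemma~\ref{lem:UB_step2} that $\mu/2<\nu<\mu/2+1$ is used to make the boundary integral converge.
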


\begin{lem}
\label{lem:UB_step2}
Let $x$ be a real number with $x \in (-1,1)$. 
Then, we have
\begin{equation}
\lim_{\delta \to +0}
\oint_{\partial \varLambda_d(\delta)} 
\left | \frac{(1-z^2)^{\mu/2-\nu}}{z-x} \right | |\mathrm{d}z| \leq C_{1} (1-x^{2})^{\mu/2 - \nu}. 
\label{eq:general-Const}
\end{equation}
where $C_{1}$ is a positive real number independent of $x$.
\end{lem}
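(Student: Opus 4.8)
\subsection*{Proposal for the proof of Lemma~\ref{lem:UB_step2}}

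The plan is to transport the contour integral to the strip $\mathcal{D}_d$ via the conformal map $u = \log\frac{1+z}{1-z} = 2\arctanh z$, under which $z = \tanh(u/2)$ and $\varLambda_d$ corresponds to $|\operatorname{Im} u| < d$. First I would observe that, since $\mu/2 - \nu > -1$ by~\eqref{eq:range_of_nu}, the integrand $|(1-z^2)^{\mu/2-\nu}/(z-x)|$ is absolutely integrable over $\partial\varLambda_d$ up to the endpoints $\pm 1$; the connecting arcs of $\partial\varLambda_d(\delta)$ near $\pm1$ have length $\mathrm{O}(\delta)$ and carry an integrand of size $\mathrm{O}(\delta^{\mu/2-\nu})$, hence contribute $\mathrm{O}(\delta^{\mu/2-\nu+1})\to 0$, while the long portions of $\partial\varLambda_d(\delta)$ converge to $\partial\varLambda_d$. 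Thus $\lim_{\delta\to+0}\oint_{\partial\varLambda_d(\delta)}$ equals the improper contour integral $\oint_{\partial\varLambda_d}|(1-z^2)^{\mu/2-\nu}/(z-x)|\,|\dd z|$, which is finite.

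Next, parametrizing the two components of $\partial\varLambda_d$ by $z = \tanh((t\pm\ii d)/2)$, $t\in\mathbf{R}$, and writing $\xi := 2\arctanh x \in\mathbf{R}$, I would use the elementary identities $1 - \tanh^2 w = \operatorname{sech}^2 w$, $\dd z = \tfrac12\operatorname{sech}^2((t\pm\ii d)/2)\,\dd t$, and $\tanh A - \tanh B = \sinh(A-B)/(\cosh A\cosh B)$ to evaluate the integrand exactly. Since the two components give the same modulus, this yields
\[
\oint_{\partial\varLambda_d}\left|\frac{(1-z^2)^{\mu/2-\nu}}{z-x}\right||\dd z|
= \cosh\frac{\xi}{2}\int_{-\infty}^{\infty}\frac{\dd t}{\bigl|\cosh\frac{t+\ii d}{2}\bigr|^{p}\,\bigl|\sinh\frac{(t-\xi)+\ii d}{2}\bigr|}, \qquad p := 2\bigl(\mu/2-\nu\bigr)+1,
\]
and, since $(1-x^2)^{\mu/2-\nu} = \bigl(\cosh(\xi/2)\bigr)^{-2(\mu/2-\nu)}$, the assertion~\eqref{eq:general-Const} reduces to the uniform bound $\sup_{\xi\in\mathbf{R}}\bigl(\cosh(\xi/2)\bigr)^{p}\,I(\xi) < \infty$, where $I(\xi)$ denotes the displayed integral.

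Finally, using the exact formulas $|\cosh(a+\ii b)|^2 = \cosh^2 a - \sin^2 b$ and $|\sinh(a+\ii b)|^2 = \sinh^2 a + \sin^2 b$ together with $0 < d < \pi$ (so that $\sin(d/2)>0$), I would establish the elementary two-sided estimates $c_d\,\ee^{|t|/2} \le \bigl|\cosh\frac{t+\ii d}{2}\bigr| \le \ee^{|t|/2}$ and $\bigl|\sinh\frac{(t-\xi)+\ii d}{2}\bigr| \ge c_d'\,\ee^{|t-\xi|/2}$ for suitable $c_d,c_d'>0$; in particular there is no singularity on the real axis. Substituting these, together with $\tfrac12\ee^{|\xi|/2}\le\cosh(\xi/2)\le\ee^{|\xi|/2}$, bounds $\bigl(\cosh(\xi/2)\bigr)^p I(\xi)$ by a constant times $\ee^{p|\xi|/2}\int_{-\infty}^{\infty}\ee^{-p|t|/2-|t-\xi|/2}\,\dd t$; by the symmetry $t\mapsto-t$, $\xi\mapsto-\xi$ it suffices to treat $\xi\ge0$, and splitting the integral at $t=0$ and $t=\xi$ produces three elementary exponential integrals, each of which is bounded uniformly in $\xi\ge0$ precisely because $\mu/2-\nu+1>0$ and $\mu/2-\nu<0$ by~\eqref{eq:range_of_nu}. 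The main obstacle I expect is the first step — making the passage $\delta\to+0$ and the reduction to the improper integral fully rigorous, which requires controlling the geometry of the parallel curves $\partial\varLambda_d(\delta)$ near the endpoints $\pm1$; once on the strip the argument is routine hyperbolic bookkeeping, the only wrinkle being that the exponent $p$ lies in $(-1,1)$ and may have either sign, so the estimate on $\bigl|\cosh\frac{t+\ii d}{2}\bigr|^{p}$ must be read off from whichever side of the two-sided bound is relevant.
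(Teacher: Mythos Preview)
Your proposal is correct and follows essentially the same route as the paper: pass to the strip via $z=\tanh((t\pm\ii d)/2)$, $x=\tanh(\xi/2)$, rewrite the integrand using the identities for $|\cosh(a+\ii b)|$ and $|\sinh(a+\ii b)|$ (equivalently, the paper's formulas \eqref{eq:tanh_diff_abs}--\eqref{eq:cosh_abs}), and then bound the resulting one-dimensional integral by replacing hyperbolic functions with exponentials and splitting at $t=0$ and $t=\xi$. The only cosmetic difference is that the paper packages the final step as a separate convolution estimate (Lemma~\ref{lem:coshcosh}, proved in Appendix~\ref{sec:proof_of_lem_coshcosh} by exactly your three-piece split), whereas you inline it, and you additionally spell out the $\delta\to+0$ passage that the paper leaves implicit.
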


\begin{lem}
\label{lem:from_gen_Ganelius}
We have
\begin{equation}
\sup_{x \in (-1,1)} (1-x^2)^{\mu/2} \left| B_N (x ; \boldsymbol{\beta}, d) \right|
\leq 
C_{2} \exp \left (-\sqrt{\pi d \mu N} \right ) \label{eq:general-Ganelius-[-1,1]-2}, 
\end{equation}
where $C_{2}$ is a positive real number independent of $N$.
\end{lem}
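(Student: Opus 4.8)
\textbf{Proof proposal for Lemma~\ref{lem:from_gen_Ganelius}.}
The plan is to rewrite the supremum over $x \in (-1,1)$ of the weighted generalized Blaschke product in terms of the original Ganelius product on $[0,1]$, and then invoke the generalized Ganelius theorem (Theorem~\ref{thm:GenGan}) with an appropriate choice of the exponent $r$. First I would use the substitution $x = \tanh(\xi/2)$, which maps $(-1,1)$ bijectively onto $\mathbf{R}$, together with the relation between $\beta_k$ and $b_k$ coming from~\eqref{eq:beta_k}. Under this change of variables each factor $\tanh[\frac{\pi}{2d}(\arctanh x - \arctanh \beta_k)]$ in~\eqref{eq:GenBlaschke} becomes, after using the addition formula for $\tanh$ and the definition $\arctanh \beta_k = \frac{2d}{\pi}\arctanh b_k$, a Möbius-type factor in the variable $s := \sqrt{(1-x)/(1+x)}$ (or an appropriate power thereof), so that $|B_N(x;\boldsymbol{\beta},d)|$ transforms into a product of the shape $\prod_{k=1}^N \left| \frac{s^{\pi/d} - a_k}{s^{\pi/d} + a_k} \right|$ or, more precisely, into the quantity appearing on the left of~\eqref{eq:old_mod_Gan} evaluated at a suitable power of $s$. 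Likewise $(1-x^2)^{\mu/2}$ becomes a power of $s$ times a bounded factor.

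The key algebraic step is therefore to verify the identity that converts
\[
(1-x^2)^{\mu/2}\,\bigl|B_N(x;\boldsymbol{\beta},d)\bigr|
\]
into $c \cdot t^{r}\prod_{k=1}^N \left|\frac{t-a_k}{t+a_k}\right|$ for the new variable $t = s^{\pi/d}$ (so that $s \in (0,1)$ corresponds to $t \in (0,1)$) and for the specific exponent $r = d\mu/\pi$; the boundedness of the leftover factor uniformly in $x$ must also be checked, using that $\mu/2 < \nu < \mu/2+1$ is irrelevant here (this lemma does not involve $\nu$) but that the pairing $b_k \leftrightarrow b_{-k} = -b_k$ makes the product real and nonnegative on the real axis. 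Once this reduction is in place, I would apply Theorem~\ref{thm:GenGan} with $r = d\mu/\pi$, which (being the promised version of the Ganelius theorem \emph{without} the restriction $r<1$) yields the bound $C\exp(-\pi\sqrt{Nr}) = C\exp(-\pi\sqrt{N d\mu/\pi}) = C\exp(-\sqrt{\pi d\mu N})$, exactly the right-hand side of~\eqref{eq:general-Ganelius-[-1,1]-2}.

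The main obstacle I anticipate is precisely the bookkeeping in the change of variables: tracking how the double-exponential-free single-exponential transformation $x = \tanh(\xi/2)$ interacts with the factor $\pi/(2d)$ inside the $\tanh$ in~\eqref{eq:GenBlaschke}, confirming that the exponent on $s$ coming from $(1-x^2)^{\mu/2}$ matches $r = d\mu/\pi$ after raising to the power $\pi/d$, and ensuring no spurious constant depending on $N$ creeps in (in particular that $\sigma_k$ plays no role here and that the endpoint-cluster points $a_k$ for $k > N_0$ are handled by Theorem~\ref{thm:GenGan} itself, not by this lemma). A secondary point to be careful about is that the supremum over $x\in(-1,1)$ corresponds to the supremum over $t\in(0,1)$ in~\eqref{eq:old_mod_Gan} (equivalently $s\in[0,1]$), which matches the domain $[0,1]$ in the Ganelius estimate; the endpoints $x\to\pm1$ map to $t\to 0$ and $t\to\infty$, but the weight $t^r$ together with the product kills the $t\to 0$ end and the product alone (each factor $\le 1$ for $t$ large after the pairing) controls the other, so the $\sup$ is genuinely attained on a compact subset and the cited estimate applies verbatim.
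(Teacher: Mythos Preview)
Your plan is correct and is essentially the paper's own proof: reduce to Theorem~\ref{thm:GenGan} with $r=d\mu/\pi$ via a change of variables that turns $B_N$ into a Ganelius product and $(1-x^2)^{\mu/2}$ into the weight.

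One bookkeeping correction, however: after the addition formula each factor becomes $(\tau-b_k)/(1-b_k\tau)$ with $\tau=\tanh\bigl(\tfrac{\pi}{2d}\arctanh x\bigr)=\dfrac{1-s^{\pi/d}}{1+s^{\pi/d}}$, and after pairing $k\leftrightarrow -k$ one gets $|B_N(x;\boldsymbol{\beta},d)|=\prod_{k=1}^N\bigl|\tfrac{u-a_k}{u+a_k}\bigr|$ for $u=\dfrac{1-\tau^2}{1+\tau^2}=\dfrac{2s^{\pi/d}}{1+s^{2\pi/d}}$, \emph{not} for $u=s^{\pi/d}$. The paper sidesteps this by staying with the intermediate variable $\tau$ (called $t$ there): under $x=\tanh\bigl(\tfrac{2d}{\pi}\arctanh t\bigr)$ one has exactly $B_N(x;\boldsymbol{\beta},d)=\pprod_{k}\tfrac{t-b_k}{1-b_kt}$, and only the elementary inequality $1-x^2\le 2^{2(2d/\pi+1)}(1-t^2)^{2d/\pi}$ is needed before invoking the equivalent form~\eqref{eq:GaneliusTheoryInfinity2} of the Ganelius theorem on $[-1,1]$. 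Finally, since $\beta_{-k}=-\beta_k$ makes $(1-x^2)^{\mu/2}|B_N(x;\boldsymbol{\beta},d)|$ even in $x$, you can restrict to $x\in[0,1)$, which keeps the Ganelius variable in $[0,1]$ and renders your $t\to\infty$ discussion unnecessary.
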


Then, Theorem~\ref{thm:general-Ganelius-approximation} is proven as follows. 
We derive from Lemmas~\ref{lem:UB_step1} and~\ref{lem:UB_step2} that  
\begin{align}
& \sup_{
\begin{subarray}{c}
f \in \boldsymbol{H}^{\infty}(\varLambda_{d}, w_{\mu}) \\
\| f \| \leq 1
\end{subarray}
}
\left(
\sup_{x \in (-1,1)}
\left|
f(x) - \tilde{f}_{\nu, N}(x)
\right|
\right) \notag \\
& \leq
\sup_{x \in (-1,1)} \frac{C_{1}}{2\pi} \, (1-x^2)^{\mu/2} \, | B_n (x ; \boldsymbol{\beta}, d) |.
\notag
\end{align}
Then, by Lemma~\ref{lem:from_gen_Ganelius}, we have the estimate 
in~\eqref{eq:general-Ganelius-error} in Theorem~\ref{thm:general-Ganelius-approximation}. 
Finally, by Theorem~\ref{thm:min_err_lower_bound} with $n = 2N$ and Theorem~\ref{thm:general-Ganelius-approximation}, 
we have
\begin{align}
c \, \exp \left( - \sqrt{\pi d \mu N} \right)
& \leq 
E_{2N}^{\mathrm{min}}(\boldsymbol{H}^{\infty}(\varLambda_{d}, w_{\mu})) \notag \\
& \leq 
\sup_{
\begin{subarray}{c}
f \in \boldsymbol{H}^{\infty}(\varLambda_{d}, w_{\mu}) \\
\| f \| \leq 1
\end{subarray}
}
\left(
\sup_{x \in (-1,1)}
\left|
f(x) - \tilde{f}_{\nu, N}(x)
\right|
\right) \notag \\
& \leq
C \exp \left( - \sqrt{\pi d \mu N} \right), 
\notag 
\end{align}
which guarantees the optimality of Formula~\eqref{eq:main_formula}.

\section{Proofs of Lemmas~\ref{lem:UB_step1}--\ref{lem:from_gen_Ganelius}}
\label{sec:proof_of_main_steps}
\subsection{Proof of Lemma~\ref{lem:UB_step1}}

Let $x \in (-1,1)$ and let $\delta$ be a positive number such that $\{ x \} \cup \{ \beta_{k} \} \subset \varLambda_d(\delta)$. 
We begin with writing the difference $f(x) - \tilde{f}_{\nu, N}(x)$ by an complex contour integral. 
By using $g(z) := f(z)/(1-z^{2})^{\nu}$, 
we define $I_{\varLambda_d(\delta)}(x)$ by 
\begin{equation}
I_{\varLambda_d(\delta)}(x) := 
\frac{1}{2\pi \mathrm{i}} \oint_{\partial \varLambda_d(\delta)} \dfrac{1}{B_N (z ; \boldsymbol{\beta}, d)} \, \dfrac{g(z)}{z-x} \, \mathrm{d}z.
\label{eq:tilde-I(x)}
\end{equation}
Because the function $g$ is analytic on $\varLambda_d(\delta)$ and bounded on the closure of $\varLambda_d(\delta)$, 
it follows from the residue theorem that 
\begin{align}
I_{\varLambda_d(\delta)}(x) 
=
\frac{g(x)}{B_{N}(x; \boldsymbol{\beta}, d)} 
- \psum_{k = -N}^{N} g(\beta_{k}) \, \frac{2d \sigma_{k}}{\pi} \, \frac{1 - \beta_{k}^{2}}{x - \beta_{k}}. 
\notag
\end{align}
By multiplying both sides of the above equality by $(1-x^{2})^{\nu} B_{N} (x ; \boldsymbol{\beta}, d)$, we have
\begin{align}
\label{eq:general-pre-g}
& (1-x^2)^{\nu} \, B_{N} (x ; \boldsymbol{\beta}, d) \, I_{\varLambda_d (\delta)}(x) \\
& =
f(x) - \psum_{k=-N}^{N} f(\beta_k) \, \frac{2d \sigma_{k}}{\pi} \,  
\frac{(1-x^2)^{\nu}}{(1-\beta_{k}^{2})^{\nu-1}} \, \frac{B_{N} (x ; \mbox{\boldmath $\beta$}, d)}{x-\beta_k} \notag \\
& = f(x) - \tilde{f}_{\nu, N}(x). \notag 
\end{align}

Then, 
we bound $| I_{\varLambda_d (\delta)}(x) |$ from above
for $f \in \boldsymbol{H}^{\infty}(\varLambda_{d}, w_{\mu})$ with $\| f \| \leq 1$ 
and $x \in (-1,1)$ as follows:
\begin{align}
\label{eq:I_bound}
& \left| I_{\varLambda_d(\delta)}(x) \right | \\
& = 
\frac{1}{2\pi} \left | \oint_{\partial \varLambda_d(\delta)} 
\frac{1}{B_{N} (z ; \boldsymbol{\beta}, d)} \, 
\frac{f(z)}{(1-z^2)^{\mu/2}} \, 
\frac{(1-z^2)^{\mu/2 - \nu}}{z-x} \, \mathrm{d}z \right| \notag \\
&\leq 
\frac{1}{2\pi}
\max_{z \in \partial \varLambda_d(\delta)} \left| \frac{1}{B_{N} (z ; \boldsymbol{\beta}, d)} \right| \, 
\oint_{\partial \varLambda_d(\delta)} 
\left| 
\frac{(1-z^2)^{\mu/2-\nu}}{z-x} 
\right| 
|\mathrm{d}z|. 
\notag
\end{align}

Finally, we can derive the conclusion of Lemma~\ref{lem:UB_step1} 
from~\eqref{eq:general-pre-g}, \eqref{eq:I_bound}, and the fact that
\begin{align}
\notag
\lim_{\delta \to +0} \max_{z \in \partial \varLambda_d(\delta)} \left| \frac{1}{B_{N} (z ; \boldsymbol{\beta}, d)} \right| = 1. 
\end{align}
This equality follows from the expression
\begin{align}
B_{N} (z ; \boldsymbol{\beta}, d) 
& = \pprod_{k=-N}^{N} \tanh \left[ \frac{\pi}{2d} (\zeta/2 - \mathop{\mathrm{arctanh}} \beta_{k} ) \right] \notag \\
& = \pprod_{k=-N}^{N} \tanh 
\left[ \frac{\pi}{2d} (\mathop{\mathrm{Re}} \zeta/2 - \mathop{\mathrm{arctanh}} \beta_{k} ) 
+ \frac{\pi}{4d} (\mathop{\mathrm{Im}} \zeta) \, \mathrm{i} \right], \notag
\end{align}
where $\zeta = 2 \mathop{\mathrm{arctanh}} z \in \mathcal{D}_{d}$ (see \eqref{eq:def_Lambda_d}). 
Note that 
$z \in \partial \varLambda_d \iff | \mathop{\mathrm{Im}} \zeta | = d $ and 
$|\tanh [s \pm (\pi/4)\, \mathrm{i}] | = 1$ for any $s \in \mathbf{R}$. 

%---------------------------------------------
\subsection{Proof of Lemma~\ref{lem:UB_step2}}

It suffices to bound 
\begin{align}
\notag 
I(d, \mu, \nu; x) 
:=
\oint_{\partial \varLambda_d} \left | \frac{(1-z^2)^{\mu/2-\nu}}{z-x} \right| \, | \mathrm{d}z |
\end{align}
from above by the RHS of~\eqref{eq:general-Const}. 
Owing to the symmetry of the contour $\partial \varLambda_d$ and the integrand with respect to the real axis, 
we only consider its upper half. 
We employ the variable transformations given by
\begin{align}
\notag 
z = \tanh \left( \frac{s + d\, \ii}{2} \right) \qquad (-\infty < s < \infty) 
\end{align}
and $x = \tanh(t/2)$ with $t \in \mathbf{R}$ to obtain
\begin{align}
\label{eq:tanh_cosh}
& \frac{1}{2} I(d, \mu , \nu ; \tanh(t/2)) \\
& =
\frac{1}{2}
\int_{-\infty}^{\infty}  
\frac{1}{\left| \tanh((s + d\, \ii)/2) - \tanh (t/2) \right|}\,
\frac{1}{\left| \cosh((s + d\, \ii)/2) \right|^{\mu - 2\nu + 2}}\, \dd s. \notag
\end{align}
Because we have 
\begin{align}
\label{eq:tanh_diff_abs}
& \left| \tanh((s + d\, \ii)/2) - \tanh (t/2) \right|^{2}
=
\frac{1}{\cosh^{2}(t/2)}
\frac{ \cosh(s-t) - \cos d }{\cosh s + \cos d}, \\
\label{eq:cosh_abs}
& \left| \cosh((s + d\, \ii)/2) \right|^{2} 
= 
\frac{1}{2} (\cosh s + \cos d)
\end{align}
after some algebra, 
it follows from~\eqref{eq:tanh_cosh}, \eqref{eq:tanh_diff_abs}, and~\eqref{eq:cosh_abs} that 
\begin{align}
\label{eq:tanh_cosh_alg}
& I(d, \mu, \nu ; \tanh(t/2)) \\
& =
\int_{-\infty}^{\infty}  
\frac{2^{(\mu - 2\nu + 2)/2} \cosh(t/2)}{(\cosh(s-t) - \cos d)^{1/2}\, (\cosh s + \cos d)^{(\mu-2\nu + 1)/2}}
\, \dd s. \notag
\end{align}

Then, by noting that $|\cos d \,| < 1$ and that 
\begin{align}
\notag 
(1 - |\cos d \, |) \cosh u
\leq \cosh u \pm \cos d 
\leq (1 + |\cos d \, |) \cosh u
\end{align}
holds for a real number $u$, 
we can derive from~\eqref{eq:tanh_cosh_alg} that 
\begin{align}
I(d, \mu, \nu; \tanh(t/2))
\leq
U_{d, \mu, \nu} 
\, \cosh(t/2)
\, J(1/2, (\mu-2\nu + 1)/2; t), 
\label{eq:tanh_cosh_alg_ineq}
\end{align}
where 
\begin{align}
\notag 
& U_{d, \mu, \nu} 
:= 2^{(\mu - 2\nu + 2)/2} 
\max \left\{ 
\frac{1}{ (1 - |\cos d \, |)^{(\mu-2\nu + 2)/2} } , 
\frac{(1 + |\cos d \, |)^{(2\nu - 1 -\mu)/2}}{ (1 - |\cos d \, |)^{1/2} } 
\right\},  
\end{align}
and
\begin{align}
\label{eq:def_J}
& J(\alpha, \beta ; t) 
:=
\int_{-\infty}^{\infty}
\frac{1}{\cosh^{\alpha}(s-t)\, \cosh^{\beta} s }\, \dd s. 
\end{align}
Therefore, what remains is to estimate $J(\alpha, \beta ; t)$ given by~\eqref{eq:def_J}. 
This estimate is done as shown in the following lemma also used in~\cite{bib:UTOS_Ganelius_JSIAM}. 
For readers' convenience, we describe its proof in Appendix~\ref{sec:proof_of_lem_coshcosh}.

\begin{lem}[{\cite[Lemma 3.4]{bib:UTOS_Ganelius_JSIAM}}]
\label{lem:coshcosh}
Let $\alpha$ and $\beta$ be distinct real numbers with $\alpha+\beta > 0$ and 
let $a = \max \{ \alpha, \beta \}$ and $b = \min \{ \alpha , \beta \}$. 
Then, for a real number $t$, we have
\begin{align}
J(\alpha, \beta; t) \leq
\frac{\max\{ 2^{a+1}, 2^{a+b+1} \}}{a^{2} - b^{2}} \left( -b\, \ee^{-a|t|} + a\, \ee^{-b|t|} \right). 
\end{align}
\end{lem}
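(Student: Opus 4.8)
The plan is to reduce $J(\alpha,\beta;t)$ to an elementary integral of a piecewise‑exponential function over $\mathbf{R}$, which can then be evaluated in closed form.

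First, the substitution $s\mapsto -s$ in \eqref{eq:def_J} shows $J(\alpha,\beta;t)=J(\alpha,\beta;-t)$, so $J$ is even in $t$ and it suffices to prove the bound for $t\ge 0$, replacing $t$ by $|t|$ at the end. Next, from the elementary two‑sided estimate $\tfrac12\ee^{|u|}\le\cosh u\le\ee^{|u|}$ one obtains, for every real exponent $\gamma$,
\begin{align}
\notag
\frac{1}{\cosh^{\gamma}u}\le 2^{\gamma_{+}}\,\ee^{-\gamma|u|},\qquad \gamma_{+}:=\max\{\gamma,0\}
\end{align}
(the factor $2^{\gamma_{+}}$ rather than $2^{\gamma}$ is needed because the smaller exponent $b$ may be negative). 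Applying this with $\gamma=\alpha$, $u=s-t$ and with $\gamma=\beta$, $u=s$ gives
\begin{align}
\notag
J(\alpha,\beta;t)\le 2^{\alpha_{+}+\beta_{+}}\int_{-\infty}^{\infty}\ee^{-\alpha|s-t|-\beta|s|}\,\dd s.
\end{align}
Since $a:=\max\{\alpha,\beta\}>0$ (because $\alpha+\beta>0$), one has $\alpha_{+}+\beta_{+}=a+\max\{b,0\}$, whence $2^{\alpha_{+}+\beta_{+}+1}=\max\{2^{a+1},2^{a+b+1}\}$ — exactly the constant in the statement.

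It then remains to evaluate $K(t):=\int_{-\infty}^{\infty}\ee^{-\alpha|s-t|-\beta|s|}\,\dd s$ for $t\ge 0$. I would split $\mathbf{R}$ into $(-\infty,0)$, $(0,t)$, and $(t,\infty)$; on each of these intervals the exponent is affine in $s$, and the three contributions evaluate to $\ee^{-\alpha t}/(\alpha+\beta)$, $(\ee^{-\beta t}-\ee^{-\alpha t})/(\alpha-\beta)$, and $\ee^{-\beta t}/(\alpha+\beta)$, respectively — the two outer integrals converging because $\alpha+\beta>0$ and the middle one being well defined because $\alpha\neq\beta$. Placing the sum over the common denominator $a^{2}-b^{2}=(\alpha+\beta)(\alpha-\beta)$ collapses it to $(2a\,\ee^{-bt}-2b\,\ee^{-at})/(a^{2}-b^{2})$; combining this with the factor $2^{\alpha_{+}+\beta_{+}}$ and invoking the evenness in $t$ yields the asserted inequality. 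Every step is a direct computation, so the only care needed is in tracking the signs of $\alpha$ and $\beta$ and in confirming convergence of the three region integrals, both of which rest solely on the hypotheses $\alpha+\beta>0$ and $\alpha\neq\beta$; I do not anticipate any substantial obstacle.
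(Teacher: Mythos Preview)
Your argument is correct and follows essentially the same route as the paper's proof in Appendix~\ref{sec:proof_of_lem_coshcosh}: reduce to $t\ge 0$, bound $\cosh$ by exponentials via~\eqref{eq:cosh_cases}, split the integral at $s=0$ and $s=t$, and evaluate the three elementary pieces. The only cosmetic difference is that you package the paper's case split on the sign of $b$ into the single inequality $\cosh^{-\gamma}u\le 2^{\gamma_{+}}\ee^{-\gamma|u|}$, which is a tidy way to get the constant $\max\{2^{a+1},2^{a+b+1}\}$ without separating Cases~(i) and~(ii).
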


By Lemma~\ref{lem:coshcosh}, we have
\begin{align}
J(1/2, (\mu-2\nu + 1)/2; t)
\leq 
\tilde{C}_{\mu, \nu} \, \{ (2\nu - 1 - \mu)\, \ee^{-|t|/2} + \ee^{-((\mu-2\nu + 1)/2) |t|} \}, 
\label{eq:J_estim}
\end{align}
where $\tilde{C}_{\mu, \nu}$ is a positive number depending only on $\mu$ and $\nu$. 
Therefore, it follows from \eqref{eq:tanh_cosh_alg_ineq} and \eqref{eq:J_estim} that 
\begin{align}
I(d,\mu, \nu; \tanh(t/2))
\leq
\hat{C}_{d, \mu, \nu} \, \left\{ \cosh^{2}(t/2) \right\}^{-(\mu/2-\nu)}, 
\label{eq:pre_final_2}
\end{align}
where $\hat{C}_{d, \mu, \nu}$ is a positive number depending only on $d$, $\mu$, and $\nu$. 
By converting Inequality~\eqref{eq:pre_final_2} into that with respect to $x$ by the transform
$t = 2 \mathop{\mathrm{arctanh}} x = \log((1+x)/(1-x))$, 
we obtain the concluding inequality in Lemma~\ref{lem:UB_step2}. 

%---------------------------------------------
\subsection{Proof of Lemma~\ref{lem:from_gen_Ganelius}}

We reduce Lemma~\ref{lem:from_gen_Ganelius} to the following theorem, 
which is a generalization of Theorem~\ref{thm:OldGan} (\cite[Lemma 1]{bib:JangHaber2001}). 
We prove this theorem in Appendix~\ref{sec:proof_of_generalGan}. 

\begin{thm}
\label{thm:GenGan}
Let $r$ be a positive real number and let $N$ be a positive integer. 
Furthermore, let $\{ a_{k} \}$ be the sequence of the modified Ganelius sampling points given by Definition~\ref{defn:Ganelius_points}. Then, 
\begin{align}
\max_{s \in [0,1]} s^{r} \prod_{k = 1}^{N} \left| \frac{s-a_{k}}{s+a_{k}} \right| \leq C \exp \left( - \pi \sqrt{N r} \right)
\label{eq:mod_Gan}
\end{align}
holds, where $C$ is a positive number independent of $N$. 
\end{thm}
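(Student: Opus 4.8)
The plan is to follow the structure of the original Ganelius argument (as presented in Jang--Haber~\cite{bib:JangHaber2001}) but to track carefully where the hypothesis $r < 1$ is actually used, and to replace those places with an argument valid for all $r > 0$. First I would recall that the quantity to be bounded, $P(s) := s^{r} \prod_{k=1}^{N} \left| \frac{s-a_{k}}{s+a_{k}} \right|$, is a product of a monotone increasing factor $s^{r}$ and factors $\left| \frac{s-a_{k}}{s+a_{k}} \right|$ that are each less than $1$ on $(0,1)$; the $a_{k}$ are clustered geometrically near $0$ (the range $k=1,\dots,N_{0}$, where $a_{k} = \varphi(k-1)/\varphi(N_{0})$ with $\varphi(x) = \exp(\pi\sqrt{x/r})$) and then spread out more evenly near $1$ (the range $k = N_{0}+2,\dots,N$). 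The standard device is to take logarithms and write $\log P(s) = r \log s + \sum_{k} \log \left| \frac{s-a_{k}}{s+a_{k}} \right|$, then compare the sum to an integral: on the geometrically spaced part, $\sum_{k=1}^{N_{0}} \log \left| \frac{s-a_{k}}{s+a_{k}} \right|$ is a Riemann-type sum for $\int \log \left| \frac{s - \varphi(u)/\varphi(N_{0})}{s + \varphi(u)/\varphi(N_{0})} \right| \dd u$, which after the substitution matching $\varphi$ becomes an integral of the form $\int \log \left| \tanh(\cdot) \right|$ that can be evaluated or estimated in closed form.

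The key steps, in order: (1) reduce the max over $s \in [0,1]$ to a max over the "dangerous" region where $s$ is comparable to the largest cluster point or to $1$, using monotonicity to dispose of small $s$; (2) on the geometric part, convert the sum $\sum_{k=1}^{N_{0}} \log \left| \frac{s-a_{k}}{s+a_{k}} \right|$ into an integral plus an error term controlled by the monotonicity of the summand in $k$ (Euler--Maclaurin / integral comparison), and evaluate the resulting integral — writing $s = \varphi(\xi)/\varphi(N_{0})$ for the relevant scaling, the integral becomes $\frac{2r}{\pi^{2}} \int \log|\tanh(\cdot)|$ over an appropriate range, which is where the factor $\sqrt{Nr}$ and the constant $\pi$ in the exponent emerge; (3) handle the $O(\sqrt{Nr})$ points near $1$ (indices $N_{0}+1$ through $N$) separately, showing they contribute at most a bounded amount to $\log P(s)$ — here one uses that there are only $\lceil \frac{\pi}{4}\sqrt{Nr} \rceil$ of them and each factor $\left| \frac{s-a_{k}}{s+a_{k}} \right| \le 1$, plus a more careful estimate when $s$ itself is near $1$; (4) combine: $\log P(s) \le r \log s + (\text{integral estimate}) + O(1) \le -\pi\sqrt{Nr} + O(1)$, uniformly in $s$, giving the claimed bound with an $N$-independent $C$.

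The main obstacle — and the whole point of removing $r < 1$ — is step (3) together with the balance in step (2): when $r \ge 1$ the "width" parameters change and the naive estimate $s^{r} \le 1$ is too lossy near $s = 1$, so one must show the $\sqrt{Nr}$ boundary-layer points genuinely compensate the growth of $s^{r}$ there. Concretely, for $s$ close to $1$ one needs a lower bound on $\sum_{k=N_{0}+2}^{N} \left| \log \left| \frac{s-a_{k}}{s+a_{k}} \right| \right|$ that beats $r \log(1/s)$; because those $a_{k}$ are equispaced in $[4/5, 1]$ with spacing $\asymp 1/\sqrt{Nr}$, this sum behaves like $\sqrt{Nr} \cdot (\text{average of } \log|1-a_k|)$, and one must check this is $\gtrsim r$ times the relevant quantity — essentially an $r$-uniform estimate that the original proof only did for $r<1$. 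I expect the bulk of the work to be this near-endpoint analysis; the geometric-cluster part (steps (1)--(2)) should go through almost verbatim from the $r<1$ case since the substitution $\varphi(x) = \exp(\pi\sqrt{x/r})$ already absorbs $r$. Once the endpoint estimate is in hand, assembling the pieces in step (4) is routine.
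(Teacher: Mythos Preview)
Your overall decomposition---split off the boundary-layer points $a_{N_{0}+2},\ldots,a_{N}$, compare the geometric-cluster sum $\sum_{k=1}^{N_{0}+1}$ to an integral, then evaluate the integral---matches the paper's three-lemma structure. But you have inverted the difficulty: you assert that the geometric-cluster comparison (your step~(2)) ``should go through almost verbatim from the $r<1$ case'' and that the ``bulk of the work'' is the boundary-layer estimate (your step~(3)). In fact it is exactly the reverse. The paper records that the boundary-layer lower bound
\[
\sum_{k=N_{0}+2}^{N}\log\left|\frac{s+a_{k}}{s-a_{k}}\right|\;\ge\;\frac{\pi\sqrt{N_{0}r}}{2}\,s
\]
and the closed-form estimate for the continuous integral are already established by Jang--Haber \emph{without} the hypothesis $r<1$; the only place Jang--Haber invoke $r<1$ is precisely in the discrete-versus-integral comparison for the geometric cluster, i.e.\ in bounding
\[
\sum_{k=1}^{N_{0}+1}\log\left|\frac{s+a_{k}}{s-a_{k}}\right|\;-\;\int_{1/\varphi(N_{0})}^{1}\log\left|\frac{s+t}{s-t}\right|\frac{2r\log(t\,\varphi(N_{0}))}{\pi^{2}t}\,\dd t
\]
from below by an $N$-independent constant. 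Your plan would have you reworking a part that needs no change while waving through the part that does.

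Here is why $r<1$ enters there, and why your proposed ``monotonicity of the summand in $k$'' will not suffice. After the substitution $t=\varphi(u)/\varphi(N_{0})$ the integrand becomes $G_{\eta}(u)=-\log\bigl|\tanh[c_{r}(\sqrt{u}-\eta)]\bigr|$ with $c_{r}=\pi/(2\sqrt{r})$, and this function is \emph{not} monotone in $u$: it has a logarithmic singularity at $u=\eta^{2}$ and decays on both sides. The correct tool is convexity of $G_{\eta}$, which makes each trapezoidal-rule error nonnegative; one computes $G_{\eta}''(u)>0$ whenever $u>r/\pi^{2}$. When $r<1$ this threshold is below $1$ and every integer interval $[\ell,\ell+1]$ with $\ell\ge 1$ is automatically in the convex region---that is exactly how Jang--Haber use $r<1$. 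For general $r>0$ there are $\lceil r/\pi^{2}\rceil$ initial intervals on which convexity may fail, and one must bound their contribution separately (it is $O_{r}(1)$, independent of $N$, since the number of such intervals depends only on $r$). Together with a careful treatment of the single interval containing the singularity $u=\eta^{2}$ and of the half-interval at the right end, this is the genuine new content of the proof. Redirect your effort from step~(3) to step~(2).
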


\begin{rem}
We just assume that $r > 0$ in Theorem~\ref{thm:GenGan} as opposed to Theorem~\ref{thm:OldGan}. 
\end{rem}

As stated right after Lemma~1 in~\cite{bib:JangHaber2001}, 
owing to the variable transformation 
\begin{align}
s=\dfrac{1-t^2}{1+t^2}, 
\end{align}
Inequality~\eqref{eq:mod_Gan} in Theorem~\ref{thm:GenGan} is equivalent to
\begin{equation}
\max_{t \in [-1, 1]} 
(1-t^{2})^{r} \pprod_{k=-N}^{N} \left| \frac{t - b_{k}}{1 - b_{k} t} \right| 
\leq 
C_{3}' \exp \left( -\pi \sqrt{N r} \right), 
\label{eq:GaneliusTheoryInfinity2}
\end{equation}
where $C_{3}'$ is a positive number independent of $N$. 
In the following, 
we reduce Inequality~\eqref{eq:general-Ganelius-[-1,1]-2} in Lemma~\ref{lem:from_gen_Ganelius} 
to Inequality~\eqref{eq:GaneliusTheoryInfinity2}. 

First, by letting 
\begin{align}
x = \tanh \left( \frac{2d}{\pi} \mathop{\mathrm{arctanh}} t \right)
\label{eq:xz_trans}
\end{align}
for $t \in [-1, 1]$, we have
\begin{align}
B_{N}(x; \boldsymbol{\beta}, d) = \pprod_{k=-N}^{N} \frac{t - b_{k}}{1 - b_{k} t}.  
\notag
\end{align}
Next, by noting that 
$\mathop{\mathrm{arctanh}} t = (1/2)\log((1+t)/(1-t))$, we have
\begin{align}
1-x^{2} 
& = 
\frac{1}{\cosh^{2}((2d/\pi) \mathop{\mathrm{arctanh}} t )} \notag \\
& =
\left(
\frac{2\, (1-t^{2})^{d/\pi}}{(1+t)^{2d/\pi} + (1-t)^{2d/\pi}}
\right)^{2} \notag \\
& \leq 
2^{2(2d/\pi + 1)}\, (1-t^{2})^{2d/\pi}, 
\notag 
\end{align}
where we employ the inequality
\(
A^{\alpha} + B^{\alpha} \geq \left[ (A + B)/2 \right]^{\alpha}
\)
that holds for any positive numbers $\alpha$, $A$, and $B$. 
From these and~\eqref{eq:GaneliusTheoryInfinity2}, 
by letting $r = d \mu/\pi$, 
we have
\begin{align}
\notag
(1 - x^{2})^{\mu/2} | B_{N}(x ; \boldsymbol{\beta}, d) | 
& \leq 
2^{(2d/\pi + 1)\mu}\, (1-t^{2})^{d \mu/\pi} 
\pprod_{k=-N}^{N} \left| \frac{t - b_{k}}{1 - b_{k} t} \right| \notag \\
& \leq C_{3}'' \exp \left( - \sqrt{\pi d \mu N} \right), \notag
\end{align}
where $C_{3}''$ is a positive number independent of $N$. 
Thus Lemma~\ref{lem:from_gen_Ganelius} is proven.

\section{Numerical experiments}
\label{sec:num_exp}
In this section, 
we compute the approximations of some functions by Formula~\eqref{eq:main_formula} and observe their errors.
Moreover, 
we compare the errors with those of the SE-Sinc approximations given by Formula~\eqref{eq:SE-Sinc} for the same functions. 

We adopt the following functions for this numerical experiment. 
\begin{align*}
& \text{Example 1} \qquad 
f_{1}(x) := \sqrt{\frac{1-x^{2}}{1+x^{2}}}, \\
& \text{Example 2} \qquad 
f_{2}(x) := \sqrt{\frac{3-3x^{2}}{1+3x^{2}}}, \\
& \text{Example 3} \qquad 
f_{3}(x) := \sqrt{\frac{1-x^{2}}{3+x^{2}}}, \\
& \text{Example 4} \qquad 
f_{4}(x) := (1-x^{2})^{1/\sqrt{2}} \sqrt{\cos(4 \mathop{\mathrm{arctanh}} x) + \cosh \pi}, \\
& \text{Example 5} \qquad 
f_{5}(x) := \left( \frac{1-x^{2}}{1+x^{2}} \right)^{3/2}. 
\end{align*}
These functions have the singularities at $x = \pm 1$. 
Table~\ref{tab:S_d_mu} shows the other singularities of $f_{1}, \ldots, f_{5}$ and 
the parameters $d$ and $\mu$ such that $f_{i} \in \boldsymbol{H}^{\infty}(\mathcal{D}_{d}, w_{\mu})$ for $i = 1,\ldots, 5$.
We can adopt arbitrary positive values of $\varepsilon_{1}, \varepsilon_{2}, \varepsilon_{3}$, and $\varepsilon_{5}$ 
as long as $d > 0$ in Table~\ref{tab:S_d_mu}. 
In this experiment, we set
$\varepsilon_{1} = \varepsilon_{5} = \pi/2 - 1.57$, 
$\varepsilon_{2} = \pi/3 - 1.047$, and
$\varepsilon_{3} = 2\pi/3 - 2.094$. 
The function $f_{5}$ is an example that does not satisfy the old condition $\mu < \min \{ 2, \pi /d \}$ 
assumed in Theorem~\ref{thm:UTOS_Ganelius_JSIAM}. 

\begin{table}[!ht]
\caption{The singularities other than $\pm 1$ and parameters $d$ and $\mu$ of $f_{1}, \ldots, f_{5}$. 
The positive numbers $\varepsilon_{1}, \varepsilon_{2}, \varepsilon_{3}$, and $\varepsilon_{5}$ are arbitrary as long as $d > 0$. }
\label{tab:S_d_mu}
\begin{tabular}{l|l|l|l}
& Singularities & $d$ & $\mu$ \\
\hline
$f_{1}$ & $\pm \mathrm{i}$ & $\pi/2 - \varepsilon_{1}$ & $1$ \\
$f_{2}$ & $\pm \mathrm{i}/\sqrt{3}$ & $\pi/3 - \varepsilon_{2}$ & $1$ \\
$f_{3}$ & $\pm \mathrm{i} \sqrt{3}$ & $2\pi/3 - \varepsilon_{3}$ & $1$ \\
$f_{4}$ & $\tanh[m+(1\pm \mathrm{i})\pi/2)/2] \quad (m \in \mathbf{Z})$ & $\pi/2$ & $\sqrt{2}$ \\
$f_{5}$ & $\pm \mathrm{i}$ & $\pi/2 - \varepsilon_{5}$ & $3$
\end{tabular}
\end{table} 

In order to use Formula~\eqref{eq:main_formula}, 
we need to decide the value of $\nu$ in $\tilde{f}_{\nu, N}$. 
We set $\nu = \lceil \mu/2 \rceil$, i.e., $\nu = 1$ for $f_{1}, \ldots, f_{4}$ and $\nu = 2$ for $f_{5}$.
Then, for a fixed $N$ and each of the functions $f = f_{1}, \ldots, f_{5}$, 
we compute the values of $f(x) - \tilde{f}_{\nu, N}(x)$ for $x \in X \cup Y \subset (-1,1)$, 
where
\begin{align}
& X := \{ i/1000 \mid i = -999, \ldots, 999 \}, \notag \\
& Y := \{ \pm (1 - k/10^{\ell}) \mid \ell = 4,\ldots, 16, \ k = 1,\ldots, 9 \}, \notag 
\end{align}
and adopt the maximum of their absolute values as the computed error. 
Because the computed error is often attained at the points close to the endpoints $\pm 1$, 
we employ the set $Y$ in order to capture those points. 
For all the computations, 
we used a computer with PowerPC G5 Dual 2 GHz CPU and GCC 4.0.1 compiler, 
and programs written in C with all the floating point numbers declared as the ``long double'' variables. 
Then, all the computations are done with the quadruple precision floating point numbers. 

We show the computed errors for $N = 4, 9, 16, \ldots, 144$ 
in Figures~\ref{fig:example1}--\ref{fig:example5}.
In each figure, the legends ``SE-Sinc'' and ``Ganelius'' indicate the results of 
Formulas~\eqref{eq:SE-Sinc} and~\eqref{eq:main_formula}, respectively. 
Note that the total number $n$ of the sampling points is 
$n = 2N+1$ for Formula~\eqref{eq:SE-Sinc} and
$n = 2N$ for Formula~\eqref{eq:main_formula}. 
Furthermore, we estimate the decay rate of the errors by computing the ratio
$(\text{the error for $N = (m-1)^{2}$}) / (\text{the error for $N = m^{2}$})$ for $m = 2,3,4, \ldots, 12$. 
The theoretical values of the ratio for Formulas~\eqref{eq:SE-Sinc} and~\eqref{eq:main_formula}
are $\exp(\sqrt{\pi d \mu / 2})$ and $\exp(\sqrt{\pi d \mu})$, respectively. 
We show the computed ratios (``rate'') and theoretical values (``t.v.'') in Tables~\ref{tab:example1}--\ref{tab:example5}.

%% EX1
\begin{figure}[!ht]
  \begin{minipage}[c]{0.49\textwidth}
  	 \hspace{1cm}
  	 \begin{center} % for arXiv
     \includegraphics[width=\linewidth]{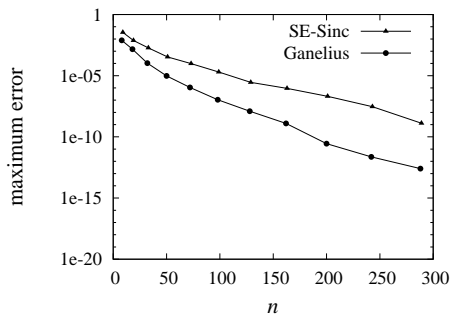}
     \end{center} % for arXiv
     \vspace{0.5cm}
     \figcaption{Maximum error in the approximation of $f_{1}$. }
     \label{fig:example1}
  \end{minipage}
  \begin{minipage}[c]{0.49\textwidth}
    \begin{center}
    \tblcaption{Convergence rate of the approximation of $f_{1}$. \protect\\}
    \label{tab:example1}
    \footnotesize
      \begin{tabular}{rlrlr} \hline
   $N$ & Ganelius & rate & SE-Sinc & rate \\ \hline
       4 & 7.73 E$^{-3}$ &   & 3.48 E$^{-2}$ &    \\
       9 & 1.47 E$^{-3}$ & 5.2 & 7.49 E$^{-3}$ &  4.6 \\
     16 & 1.06 E$^{-4}$ & 13.8 & 1.88 E$^{-3}$ &  3.9 \\
     25 & 9.57 E$^{-6}$ & 11.0 & 3.38 E$^{-4}$ &  5.5 \\
     36 & 1.10 E$^{-6}$ & 8.6 & 9.67 E$^{-5}$ &  3.5 \\
     49 & 1.07 E$^{-7}$ & 10.2 & 1.98 E$^{-5}$ &  4.8 \\
     64 & 1.25 E$^{-8}$ & 8.5 & 2.85 E$^{-6}$ &  6.9 \\
     81 & 1.25 E$^{-9}$ & 9.9 & 9.23 E$^{-7}$ &  3.0 \\
   100 & 2.78 E$^{-11}$ & 45.1 & 2.04 E$^{-7}$ &  4.5 \\
   121 & 2.31 E$^{-12}$ & 12.0 & 2.92 E$^{-8}$ &  6.9 \\
   144 & 2.55 E$^{-13}$ & 9.0 & 1.30 E$^{-9}$ &  22.3 \\
   t.v. &           &  9.2 &            &  4.8 \\ \hline
      \end{tabular}
    \end{center}
  \end{minipage}
\end{figure}

%% EX2
\begin{figure}[!ht]
  \begin{minipage}[c]{0.49\textwidth}
  	\hspace{1cm}
  	 \begin{center} % for arXiv
    \includegraphics[width=\linewidth]{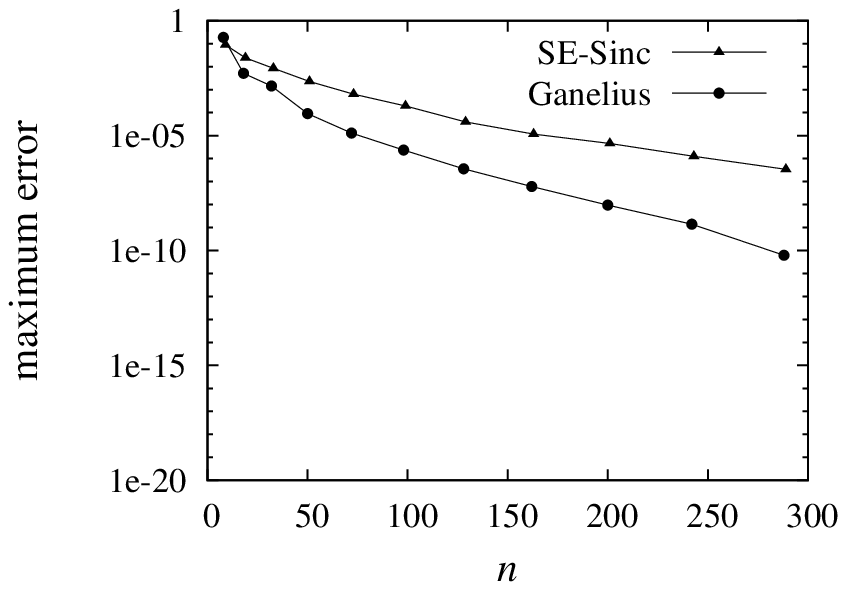}
    \end{center} % for arXiv
    \vspace{0.5cm}
    \figcaption{Maximum error in the approximation of $f_{2}$.}
    \label{fig:example2}
  \end{minipage}
  \begin{minipage}[c]{.49\textwidth}
    \begin{center}
    \tblcaption{Convergence rate of in the approximation of $f_{2}$. \protect\\}
    \label{tab:example2}
    \footnotesize
      \begin{tabular}{rlrlr} \hline
   $N$ & Ganelius & rate & SE-Sinc & rate \\ \hline
       4 & 1.89 E$^{-1}$ &   & 8.96 E$^{-2}$ &    \\
       9 & 5.17 E$^{-3}$ & 36.5 & 2.40 E$^{-2}$ &  3.7 \\
     16 & 1.44 E$^{-3}$ & 3.5 & 8.56 E$^{-3}$ &  2.8 \\
     25 & 9.13 E$^{-5}$ & 15.8 & 2.27 E$^{-3}$ & 3.7 \\
     36 & 1.28 E$^{-5}$ & 7.1 & 6.41 E$^{-4}$ &  3.5 \\
     49 & 2.34 E$^{-6}$ & 5.4 & 1.94 E$^{-4}$ &  3.3 \\
     64 & 3.57 E$^{-7}$ & 6.5 & 3.91 E$^{-5}$ &  4.9 \\
     81 & 6.06 E$^{-8}$ & 5.8 & 1.15 E$^{-5}$ &  3.3 \\
   100 & 9.46 E$^{-9}$ & 6.4 & 4.58 E$^{-6}$ &  2.5 \\
   121 & 1.40 E$^{-9}$ & 6.7 & 1.25 E$^{-6}$ &  3.6 \\
   144 & 6.17 E$^{-11}$ & 22.7 & 3.39 E$^{-7}$ & 3.6 \\
   t.v. &           &  6.1  &               &  3.6 \\ \hline
      \end{tabular}
    \end{center}
  \end{minipage}
\end{figure}

%% EX3
\begin{figure}[!ht]
  \begin{minipage}[c]{0.49\textwidth}
    \hspace{1cm}
    \begin{center} % for arXiv
    \includegraphics[width=\linewidth]{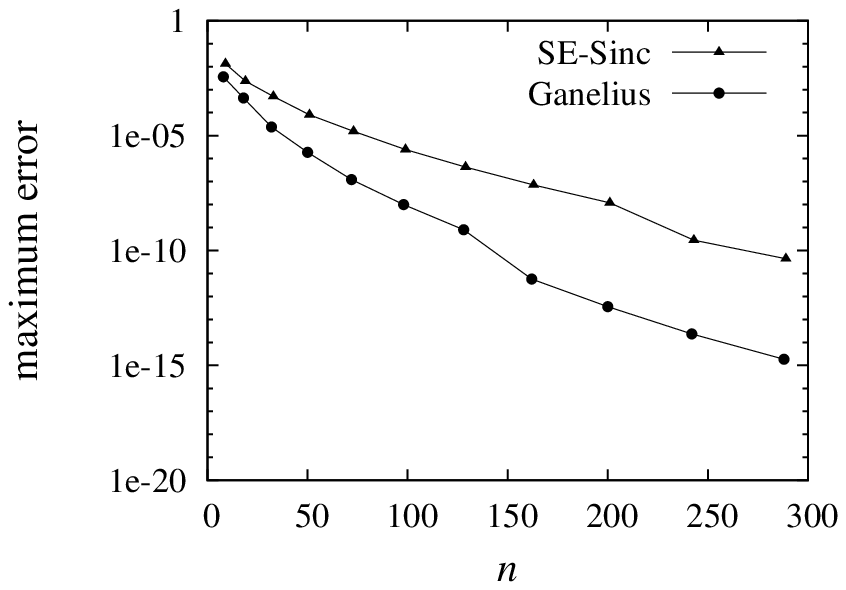}
    \end{center} % for arXiv
    \vspace{0.5cm}
    \figcaption{Maximum error in  in the approximation of $f_{3}$. }
    \label{fig:example3}
  \end{minipage}
  \begin{minipage}[c]{.49\textwidth}
    \begin{center}
    \tblcaption{Convergence rate of in the approximation of $f_{3}$. \protect\\}
    \label{tab:example3}
    \footnotesize
      \begin{tabular}{rlrlr} \hline
   $N$ & Ganelius & rate & SE-Sinc & rate \\ \hline
       4 & 3.63 E$^{-3}$ &   & 1.33 E$^{-2}$ &    \\
       9 & 4.35 E$^{-4}$ & 8.3 & 2.33 E$^{-3}$ &  5.7 \\
     16 & 2.36 E$^{-5}$ & 18.3 & 5.06 E$^{-4}$ &  4.6 \\
     25 & 1.85 E$^{-6}$ & 12.7 & 8.04 E$^{-5}$ &  6.2 \\
     36 & 1.22 E$^{-7}$ & 15.1 & 1.52 E$^{-5}$ &  5.2 \\
     49 & 1.00 E$^{-8}$ & 12.2 & 2.49 E$^{-6}$ &  6.1 \\
     64 & 7.97 E$^{-10}$ & 12.5 & 4.25 E$^{-7}$ &  5.8 \\
     81 & 5.76 E$^{-12}$ & 138.4 & 7.14 E$^{-8}$ &  5.9 \\
   100 & 3.60 E$^{-13}$ & 15.9 & 1.17 E$^{-8}$ &  6.0 \\
   121 & 2.33 E$^{-14}$ & 15.4 & 2.82 E$^{-10}$ &  41.8 \\
   144 & 1.83 E$^{-15}$ & 12.7 & 4.39 E$^{-11}$ &  6.4 \\
   t.v. &           &  13.0  &         &  6.1 \\ \hline
      \end{tabular}
    \end{center}
  \end{minipage}
\end{figure}

%% EX4
\begin{figure}[!ht]
  \begin{minipage}[c]{0.49\textwidth}
    \hspace{1cm}    
  	\begin{center} % for arXiv
    \includegraphics[width=\linewidth]{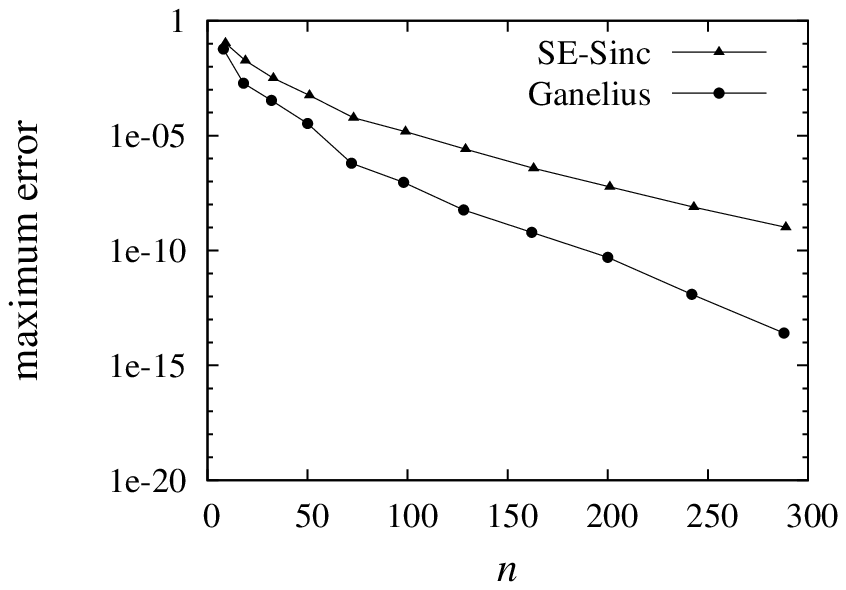}
    \end{center} % for arXiv
    \vspace{0.5cm}
    \figcaption{Maximum error in the approximation of $f_{4}$. }
    \label{fig:example4}
  \end{minipage}
  \begin{minipage}[c]{.49\textwidth}
    \begin{center}
    \tblcaption{Convergence rate of the approximation of $f_{4}$. \protect\\}
    \label{tab:example4}
    \footnotesize
      \begin{tabular}{rlrlr} \hline
   $N$ & Ganelius & rate & SE-Sinc & rate \\ \hline
       4 & 5.83 E$^{-2}$ &   & 1.06 E$^{-1}$ &    \\
       9 & 1.90 E$^{-3}$ & 30.6 & 1.81 E$^{-2}$ &  5.8 \\
     16 & 3.41 E$^{-4}$ & 5.5 & 3.14 E$^{-3}$ &  5.7 \\
     25 & 3.35 E$^{-5}$ & 10.1 & 5.59 E$^{-4}$ &  5.6 \\
     36 & 6.26 E$^{-7}$ & 53.6 & 5.95 E$^{-5}$ &  9.3 \\
     49 & 9.30 E$^{-8}$ & 6.7 & 1.47 E$^{-5}$ &  4.0 \\
     64 & 5.77 E$^{-9}$ & 16.0 & 2.54 E$^{-6}$ &  5.7 \\
     81 & 6.14 E$^{-10}$ & 9.3 & 3.78 E$^{-7}$ &  6.7 \\
   100 & 5.04 E$^{-11}$ & 12.1 & 5.88 E$^{-8}$ &  6.4 \\
   121 & 1.23 E$^{-12}$ & 40.9 & 7.63 E$^{-9}$ &  7.7 \\
   144 & 2.55 E$^{-14}$ & 48.1 & 1.01 E$^{-9}$ &  7.5 \\
   t.v. &           &  14.0  &         &  6.5 \\ \hline
      \end{tabular}
    \end{center}
  \end{minipage}
\end{figure}

%% EX5
\begin{figure}[!ht]
  \begin{minipage}[c]{0.49\textwidth}
    \hspace{1cm}    
  	\begin{center} % for arXiv
    \includegraphics[width=\linewidth]{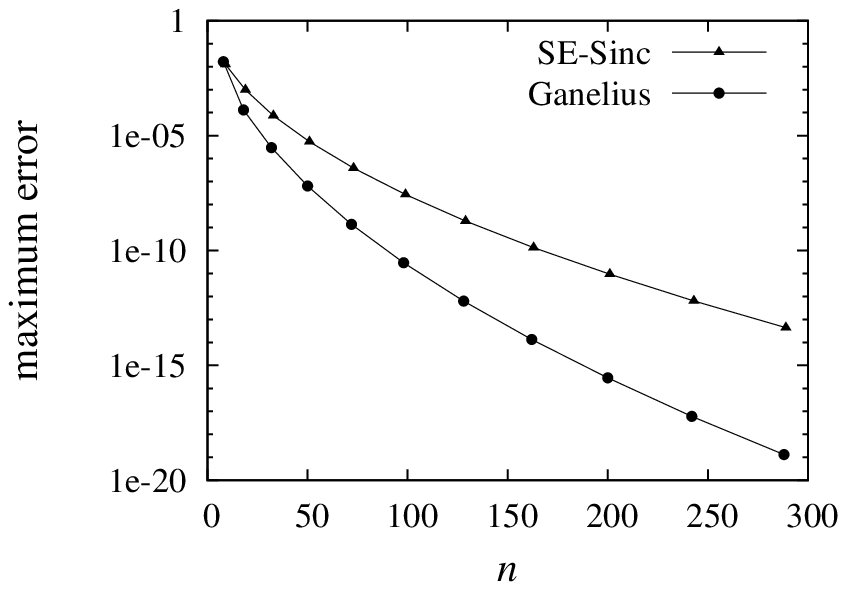}
    \end{center} % for arXiv
    \vspace{0.5cm}
    \figcaption{Maximum error in the approximation of $f_{5}$. }
    \label{fig:example5}
  \end{minipage}
  \begin{minipage}[c]{.49\textwidth}
    \begin{center}
    \tblcaption{Convergence rate of the approximation of $f_{5}$. \protect\\}
    \label{tab:example5}
    \footnotesize
      \begin{tabular}{rlrlr} \hline
   $N$ & Ganelius & rate & SE-Sinc & rate \\ \hline
       4 &  1.64 E$^{-2}$ &   &  1.24 E$^{-2}$ &    \\
       9 &  1.30 E$^{-4}$ & 126.4 &  9.91 E$^{-4}$ &  12.5 \\
     16 &  2.98 E$^{-6}$ & 43.6  &  7.37 E$^{-5}$ & 13.4  \\
     25 &  6.43 E$^{-8}$ & 46.4 &  5.38 E$^{-6}$ & 13.6  \\
     36 &  1.38 E$^{-9}$ & 46.6 &  3.85 E$^{-7}$ & 13.9  \\
     49 &  2.93 E$^{-11}$ & 46.9 &  2.72 E$^{-8}$ &  14.1 \\
     64 &  6.29 E$^{-13}$ & 46.6 &  1.91 E$^{-9}$ &  14.2 \\
     81 &  1.33 E$^{-14}$ & 47.0 &  1.33 E$^{-10}$ & 14.3  \\
   100 &  2.85 E$^{-16}$ & 46.9 &  9.23 E$^{-12}$ & 14.4  \\
   121 &  6.06 E$^{-18}$ & 46.9 &  6.36 E$^{-13}$ & 14.5  \\
   144 &  1.30 E$^{-19}$ & 46.4 &  4.36 E$^{-14}$ & 14.5  \\
   t.v. &           &  46.8  &         & 15.1 \\ \hline
      \end{tabular}
    \end{center}
  \end{minipage}
\end{figure}

From these results, we can observe that in each case 
Formula~\eqref{eq:main_formula} outperforms Formula~\eqref{eq:SE-Sinc}
and the sequence of the computed values of the ``rate'' approaches its theoretical value as $N$ increases.

\section{Concluding remarks}
\label{sec:concl}
In this paper, we proposed the approximation formula given by~\eqref{eq:main_formula} that is optimal 
in the space $\boldsymbol{H}^{\infty}(\mathcal{D}_{d}, w_{\mu})$ for any $\mu > 0$. 
Formula~\eqref{eq:main_formula} is a generalization of Formula~\eqref{eq:old_formula} 
proposed by Ushima et al.~\cite{bib:UTOS_Ganelius_JSIAM}, 
which is valid only in the case that $\mu < \min\{ 2, \pi/d \}$. 
In order to estimate the error of Formula~\eqref{eq:main_formula}, 
we showed Theorem~\ref{thm:GenGan}, 
a generalization of Theorem~\ref{thm:OldGan} (the Ganelius theorem \cite[Lemma 1]{bib:JangHaber2001}). 
Then, we gave an upper bound of the error of Formula~\eqref{eq:main_formula} in 
Theorem~\ref{thm:general-Ganelius-approximation} and 
showed the optimality of the formula by combining this upper bound
and the lower bound of the minimum error norm given by 
Theorem~\ref{thm:min_err_lower_bound} ({\cite[(a) on page 782]{bib:Sugihara2003_OptSinc}}). 
Furthermore, we observed that Formula~\eqref{eq:main_formula} achieved 
the optimal convergence rate in the numerical experiment. 

We can list some themes for future work about the optimal approximation in 
$\boldsymbol{H}^{\infty}(\varLambda_{d}, w_{\mu})$: 
finding other optimal formulas and comparing them with Formula~\eqref{eq:main_formula}, 
inventing improved methods for fast computation by Formula~\eqref{eq:main_formula}, 
applying Formula~\eqref{eq:main_formula} to differential equations such as two point boundary problems, etc.

%--------------------
% Appendices
\appendix

\section{Proof of Lemma~\ref{lem:coshcosh}}
\label{sec:proof_of_lem_coshcosh}
Without loss of generality, we can assume that $t \geq 0$, $a = \alpha$, and $b = \beta$. 
The assumptions about $a$ and $b$ are owing to the fact that 
\begin{align}
\notag 
J(\alpha, \beta ; t) 
& =
\int_{-\infty}^{\infty}
\frac{1}{\cosh^{\alpha}(t-s)\, \cosh^{\beta} s }\, \dd s \notag \\
& = 
\int_{-\infty}^{\infty}
\frac{1}{\cosh^{\alpha}u \, \cosh^{\beta} (t - u) }\, \dd u
=
J(\beta, \alpha; t). \notag
\end{align}
For the proof of this lemma, we employ the following inequalities:
\begin{align}
\begin{cases}
\ee^{u}/2 \leq \cosh u \leq \ee^{u} & (u \geq 0), \\
\ee^{-u}/2 \leq \cosh u \leq \ee^{-u} & (u < 0). 
\end{cases}
\label{eq:cosh_cases}
\end{align}
In the following, we deal with two cases: (i) $b < 0$ and (ii) $b \geq 0$. 

\bigskip

\noindent
\underline{Case (i)} \\

It holds that $a > 0 > b$ in this case. 
By using Inequalities~\eqref{eq:cosh_cases}, we have
\begin{align}
\notag
\frac{(\cosh s)^{-b}}{\cosh^{a}(s-t)} 
\leq 
\begin{cases}
2^{a} \, \ee^{-a t} \, \ee^{(a+b) s} & (s < 0), \\
2^{a} \, \ee^{-a t} \, \ee^{(a-b) s} & (0 \leq s \leq t), \\
2^{a} \, \ee^{ a t} \, \ee^{-(a+b)s} & (t < s). 
\end{cases}
\end{align}
Therefore, $J(a,b;t)$ is bounded from above as follows: 
\begin{align}
\notag
J(a,b;t) & = 
\left( \int_{-\infty}^{0} + \int_{0}^{t} + \int_{t}^{\infty}   \right)
\frac{(\cosh s)^{-b}}{\cosh^{a}(s-t)}
\, \dd s \\
& \leq
2^{b} \left[
\ee^{-at} \left\{ \frac{1}{a+b} + \frac{1}{a-b} \left( \ee^{(a-b) t} - 1 \right) \right\} + \frac{1}{a+b} \ee^{-bt} 
\right] \notag \\
& = 
\frac{2^{a+1}}{a^{2} - b^{2}}
\left(
-b\, \ee^{-a t} + a \ee^{-b t}
\right). \notag
\end{align}

\bigskip

\noindent
\underline{Case (ii)} \\

It holds that $a \geq b \geq 0$ in this case. 
By using Inequalities~\eqref{eq:cosh_cases}, we have
\begin{align}
\frac{1}{\cosh^{a}(s-t) \, \cosh^{b} s} 
\leq
\begin{cases}
2^{a+b} \, \ee^{-a t} \, \ee^{(a+b) s} & (s < 0), \\
2^{a+b} \, \ee^{-a t} \, \ee^{(a-b) s} & (0 \leq s \leq t), \\
2^{a+b} \, \ee^{ a t} \, \ee^{-(a+b)s} & (t < s). 
\end{cases}
\end{align}
Therefore, 
in the same manner as Case (i), 
$J(a,b;t)$ is bounded from above as follows: 
\begin{align}
J(a,b;t) 
& = 
\left( \int_{-\infty}^{0} + \int_{0}^{t} + \int_{t}^{\infty}   \right)
\frac{1}{\cosh^{a}(s-t) \, \cosh^{b} s}
\, \dd s \notag \\
& \leq
\frac{2^{a+b+1}}{a^{2} - b^{2}}
\left(
-b\, \ee^{-a t} + a \ee^{-b t}
\right). \notag
\end{align}

From the estimates in Cases (i) and (ii), Lemma~\ref{lem:coshcosh} is proven.

\section{Proof of Theorem~\ref{thm:GenGan}}
\label{sec:proof_of_generalGan}
It is sufficient to consider the case that $N$ is larger than a certain positive constant. 
Therefore, we assume that $N$ is large enough such that $N_{0} > 3$.
The conclusion of Theorem~\ref{thm:GenGan} is equivalent to the statement that
\begin{align}
\sum_{k = 1}^{N} \log \left| \frac{s+a_{k}}{s-a_{k}} \right| 
\geq  
\pi \sqrt{N r} + r \log s + C'
\label{eq:mod_Gan_equiv}
\end{align}
holds for any $s \in [0,1]$, where $C'$ is a real number independent of $N$ and $s$. 
We show this statement by proving the following three lemmas. 

%-----
\begin{lem}
\label{lem:disc_plus}
On the same assumption as Theorem~\ref{thm:GenGan}, 
for all $s \in [0, 1]$, we have
\begin{align}
\sum_{k = N_{0} + 2}^{N} \log \left| \frac{s + a_{k}}{s - a_{k}} \right| 
\geq 
\frac{\pi \sqrt{N_{0} r}}{2}\, s.
\label{eq:disc_plus}
\end{align}
\end{lem}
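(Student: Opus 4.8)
The plan is to argue directly from the explicit form of the points in the sum. For $k\in\{N_0+2,\dots,N\}$ put $j:=k-N_0-1$ and $M:=N-N_0-1=\lceil\tfrac{\pi}{4}\sqrt{Nr}\,\rceil-1$; then $a_k=1-\dfrac{j}{5M}$, so that $a_{N_0+2},\dots,a_N$ are $M$ equally spaced points filling $[\tfrac45,\,1-\tfrac1{5M}]\subset(0,1)$. Since Theorem~\ref{thm:GenGan} only has to be proved for $N$ beyond an $r$-dependent threshold (the finitely many smaller $N$ being absorbed into $C$), I may assume $N$ — hence $N_0$ and $M$ — is as large as convenient; in particular $M\ge\tfrac{\pi}{4}\sqrt{Nr}-1\ge\tfrac{\pi}{4}\sqrt{N_0 r}-1$.

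I would then split on the size of $s$. \emph{Case $0\le s\le\tfrac45$}: here $s\le\tfrac45\le a_k$ for every $k$, so $\log\bigl|\tfrac{s+a_k}{s-a_k}\bigr|=2\arctanh(s/a_k)\ge 2s/a_k$, and since $j\mapsto(1-j/(5M))^{-1}$ is increasing,
\[
\sum_{k=N_0+2}^{N}\log\left|\frac{s+a_k}{s-a_k}\right|
\;\ge\;2s\sum_{j=1}^{M}\frac{1}{1-j/(5M)}
\;\ge\;2s\int_{0}^{M}\frac{\dd j}{1-j/(5M)}
\;=\;10M\log(\tfrac54)\,s .
\]
Because $5\log(\tfrac54)>1$, the inequality $10M\log(\tfrac54)\ge\tfrac{\pi}{2}\sqrt{N_0 r}$ holds once $N$ (hence $N_0$) is large, and \eqref{eq:disc_plus} follows in this case. \emph{Case $\tfrac45<s\le1$}: write $s=1-\varepsilon$ with $\varepsilon\in[0,\tfrac15)$ and $u_j:=j/(5M)\in(0,\tfrac15]$; then the $k$-th summand equals $\log\dfrac{2-\varepsilon-u_j}{|u_j-\varepsilon|}$ (if $u_j=\varepsilon$ for some $j$ it is $+\infty$ and there is nothing to prove). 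Since $2-\varepsilon-u_j\in(\tfrac85,2)$ and $|u_j-\varepsilon|\le\tfrac15$, each summand is at least $\log\tfrac85+\log5=\log8$, whence
\[
\sum_{k=N_0+2}^{N}\log\left|\frac{s+a_k}{s-a_k}\right|\;\ge\;M\log 8 ,
\]
and as $\log 8>2$ the right-hand side exceeds $\tfrac{\pi}{2}\sqrt{N_0 r}\ge\tfrac{\pi}{2}\sqrt{N_0 r}\,s$ for $N$ large, again using $M\ge\tfrac{\pi}{4}\sqrt{N_0 r}-1$.

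The delicate point, and where I expect the real work to lie, is the constant bookkeeping rather than any single estimate. The naive bound ``each summand is $\ge 2s$'' — which is all one gets from $\arctanh x\ge x$ together with $a_k\le1$ — is precisely on the borderline and is \emph{not} enough, since $2M$ only just fails to dominate $\tfrac{\pi}{2}\sqrt{N_0 r}$ after the ceiling and the $-1$ in $M$ are taken into account. One therefore has to cash in the specific geometry of the disc points: that they lie a fixed distance below $1$ on average (the gain $5\log\tfrac54>1$ in the first case) and that the denominators $|u_j-\varepsilon|$ never exceed $\tfrac15$ (the gain $\log8>2$ in the second case), both being consequences of the factor $\tfrac15$ in Definition~\ref{defn:Ganelius_points}. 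Assembling the two cases and discharging the finitely many small $N$ into $C$ then yields \eqref{eq:disc_plus}.
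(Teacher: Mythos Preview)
Your argument is correct. The two-case split at $s=4/5$ is natural: for $s\le 4/5$ you exploit $\arctanh x\ge x$ and then compare the sum $\sum_{j=1}^{M}(1-j/(5M))^{-1}$ with the corresponding integral to extract the factor $5\log(5/4)>1$; for $s>4/5$ the crude pointwise bound $\log\bigl((2-\varepsilon-u_j)/|u_j-\varepsilon|\bigr)\ge\log 8>2$ suffices. In both cases the margin over the borderline constant $\pi/2$ is genuine and comes, as you note, from the $1/5$ in Definition~\ref{defn:Ganelius_points}.

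As for comparison with the paper: the paper does not give its own proof of this lemma at all. It simply records that the inequality is established by Jang and Haber~\cite{bib:JangHaber2001} (the argument on p.~221 there) and that their proof does not use the assumption $r<1$. So there is nothing in the present paper to set your proof against; you have supplied a self-contained replacement for the cited argument.

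One small point of bookkeeping: the paper states the three auxiliary lemmas after the blanket hypothesis ``$N$ large enough that $N_0>3$'', whereas your constants (from $5\log(5/4)>1$ and $\log 8>2$) force a somewhat larger $r$-dependent threshold on $N_0$ before \eqref{eq:disc_plus} holds. This is harmless for the intended application---Theorem~\ref{thm:GenGan} only needs the lemma for $N$ beyond some constant, the remaining $N$ being absorbed into $C$---and you already flag this at the outset; just be aware that you are proving the lemma under a slightly stronger (but still permissible) largeness hypothesis than the one literally written in the paper.
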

%-----
\begin{lem}
\label{lem:cont_integral}
On the same assumption as Theorem~\ref{thm:GenGan}, 
for all $s \in [0, 1]$, we have
\begin{align}
\int_{1/\varphi(N_{0})}^{1} \log \left| \frac{s + t}{s - t} \right| \frac{2r \log(t\, \varphi(N_{0}))}{\pi^{2} t} \, \dd t
\geq 
\pi \sqrt{N_{0} r} + r \log s - \frac{\pi \sqrt{N_{0} r}}{2}\, s + C'',
\label{eq:cont_integral}
\end{align}
where $C''$ is a real number independent of $N_{0}$ and $s$. 
\end{lem}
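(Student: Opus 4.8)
The plan is to evaluate, or at least bound from below, the integral
\[
K(s) := \int_{1/\varphi(N_{0})}^{1} \log \left| \frac{s + t}{s - t} \right| \frac{2r \log(t\, \varphi(N_{0}))}{\pi^{2} t} \, \dd t
\]
by first normalizing the interval of integration. I would substitute $t = \ee^{-u}$ (or equivalently write $u = \log(1/t)$), so that as $t$ runs over $[1/\varphi(N_{0}), 1]$ the variable $u$ runs over $[0, \log \varphi(N_{0})] = [0, \pi\sqrt{N_{0}/r}]$; recall $\varphi(N_{0}) = \exp(\pi\sqrt{N_{0}/r})$. Under this change of variables the weight $\frac{2r\log(t\varphi(N_{0}))}{\pi^{2}t}\,\dd t$ becomes $\frac{2r}{\pi^{2}}\bigl(\pi\sqrt{N_{0}/r} - u\bigr)\,\dd u$, a simple linear function vanishing at the right endpoint, and $\log|(s+t)/(s-t)| = \log\bigl|(s+\ee^{-u})/(s-\ee^{-u})\bigr|$. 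The point of the Ganelius construction is that this density is, up to the boundary correction near $u=0$, the ``right'' measure making the logarithmic potential of the discrete points $\{a_k\}$ close to the target potential $\pi\sqrt{Nr} + r\log s$.

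The next step is the actual estimate of $K(s)$. I would split into the regimes $s \le 1/\varphi(N_{0})$ (equivalently $\log(1/s) \ge \pi\sqrt{N_{0}/r}$, i.e.\ $s$ very small) and $s \ge 1/\varphi(N_{0})$, and inside the latter further isolate the singularity of $\log|s-t|$ at $t = s$. Writing $\log|(s+t)/(s-t)| = \log(s+t) - \log|s-t|$ and using that $\int \log|s-t|\,\dd t$ over an interval is elementary, the dominant contributions are: (a) a term of size $\pi\sqrt{N_{0}r}$ coming from integrating the linear weight against the ``bulk'' value of the logarithm, (b) a term $r\log s$ coming from the behaviour of the potential as $s$ decreases, and (c) the term $-\tfrac{\pi\sqrt{N_{0}r}}{2}s$, which matches (with opposite sign) the lower bound for the discrete sum in Lemma~\ref{lem:disc_plus} — this matching is deliberate, since in \eqref{eq:mod_Gan_equiv} the two contributions are added and the $s$-linear pieces must cancel, leaving only $\pi\sqrt{Nr} + r\log s + C'$. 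I expect that after the substitution the integral can be written in closed form in terms of dilogarithms, or else bounded directly by comparing $\log|(s+\ee^{-u})/(s-\ee^{-u})|$ to the simpler kernels $\min\{u, \log(1/s)\}$-type expressions; either route should produce the three main terms plus an $O(1)$ error absorbed into $C''$.

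The main obstacle will be handling the interaction between the singularity of the logarithmic kernel at $t=s$ and the vanishing of the weight at $t=1$: one must be careful that the $\log|s-t|$ singularity contributes only a bounded amount (it is integrable, but its contribution must be controlled uniformly in $s$ and $N_{0}$), and that the endpoint behaviour near $t = 1/\varphi(N_{0})$, where the weight is small but the integration range is long, genuinely produces the leading $\pi\sqrt{N_{0}r}$ and not something larger. A secondary care point is the regime $s \to 0$: there $r\log s \to -\infty$, so the inequality becomes easy, but one still needs the constant $C''$ to be uniform, which requires tracking that no term of the form $(\log s)^{2}$ or $\sqrt{N_{0}}\log s$ sneaks in. Once $K(s)$ is pinned down to $\pi\sqrt{N_{0}r} + r\log s - \tfrac{\pi\sqrt{N_{0}r}}{2}s + O(1)$ uniformly, the claimed inequality \eqref{eq:cont_integral} follows immediately.
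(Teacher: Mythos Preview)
The paper does not actually prove this lemma: immediately after stating Lemmas~\ref{lem:disc_plus}--\ref{lem:cont_vs_disc} it says that Lemma~\ref{lem:cont_integral} has ``already been proven by Jang \& Haber~\cite{bib:JangHaber2001}'' in their ``Proof of~(2)'' on pages~218--219, and notes that that argument does not use the hypothesis $r<1$. So there is no in-paper derivation to compare your proposal against; the paper simply invokes the cited result.

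Your plan is sound as an outline. The substitution $t=\ee^{-u}$ does turn the weight into the linear density $\tfrac{2r}{\pi^{2}}(L-u)$ with $L=\pi\sqrt{N_{0}/r}$, and writing $s=\ee^{-w}$ further collapses the kernel to $-\log\bigl|\tanh\bigl((u-w)/2\bigr)\bigr|$, from which the three target terms $\pi\sqrt{N_{0}r}$, $r\log s$, and $-\tfrac{\pi}{2}\sqrt{N_{0}r}\,s$ can indeed be extracted by direct integration. This is essentially equivalent to the computation in~\cite{bib:JangHaber2001}; the present paper, for the adjacent Lemma~\ref{lem:cont_vs_disc}, favours the related parametrisation $t=\varphi(u)/\varphi(N_{0})$, and Jang--Haber's argument is in the same spirit. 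Neither route is materially simpler than the other.

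What you have written, however, is explicitly a plan rather than a proof: the sentence ``I expect that \ldots\ the integral can be written in closed form in terms of dilogarithms, or else bounded directly'' is precisely where all the work lies. To complete the argument you would still need to carry out that integration and confirm that the remainder is $O(1)$ uniformly in $s\in(0,1]$ and in $N_{0}$---in particular that the integrable singularity at $t=s$ and the long range near $t=1/\varphi(N_{0})$ behave as you predict, and that no cross term of order $\sqrt{N_{0}}\log s$ survives. None of this is conceptually hard (it is the content of pages~218--219 of~\cite{bib:JangHaber2001}), but it has not been done here.
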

%-----
\begin{lem}
\label{lem:cont_vs_disc}
On the same assumption as Theorem~\ref{thm:GenGan}, 
for all $s \in [0, 1]$, we have
\begin{align}
\sum_{k = 1}^{N_{0}+1} \log \left| \frac{s + a_{k}}{s - a_{k}} \right| 
-
\int_{1/\varphi(N_{0})}^{1} \log \left| \frac{s + t}{s - t} \right| \frac{2r \log(t\, \varphi(N_{0}))}{\pi^{2} t} \, \dd t
\geq C''',
\label{eq:cont_vs_disc}
\end{align}
where $C'''$ is a real number independent of $N_{0}$ and $s$. 
\end{lem}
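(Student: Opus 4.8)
The plan is to compare the discrete sum with the integral after passing to the ``counting variable'' of the integral. Write $g(t)=\log\bigl|(s+t)/(s-t)\bigr|$ and let $n(t):=(r/\pi^{2})\bigl(\log(t\,\varphi(N_{0}))\bigr)^{2}$, so that $\mathrm{d}n=\rho(t)\,\mathrm{d}t$ with $\rho$ the density appearing in~\eqref{eq:cont_vs_disc}, and $n(a_{k})=k-1$ for $k=1,\dots,N_{0}$, $n(a_{N_{0}+1})=N_{0}-\tfrac12$, $n(1)=N_{0}$. Setting $h(y):=g\bigl(n^{-1}(y)\bigr)$ on $[0,N_{0}]$, the integral in~\eqref{eq:cont_vs_disc} is $\int_{0}^{N_{0}}h(y)\,\mathrm{d}y$, the sum over $k=1,\dots,N_{0}$ is $\sum_{j=0}^{N_{0}-1}h(j)$, and the remaining term is $h(N_{0}-\tfrac12)$. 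Hence it suffices to prove $\sum_{j=0}^{N_{0}-1}h(j)+h(N_{0}-\tfrac12)\ge\int_{0}^{N_{0}}h+C'''$. If the pole $t=s$ lies outside $(a_{1},1)$ the reduced inequality is immediate ($g$ is then monotone on $[a_{1},1]$, so every piece $\int_{a_{k}}^{a_{k+1}}g\,\mathrm{d}n\le g(a_{k})$ and the left side dominates), so I assume from now on that $s\in(a_{1},1]$ and put $k^{*}:=1+n(s)$, $p:=\lfloor k^{*}\rfloor$, $\eta:=(k^{*}-1)-(p-1)\in[0,1)$, so that the pole falls in the single piece $[p-1,p]$ (or, when $s=1$, abuts $y=N_{0}$).

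The estimate is driven by two convexity facts. First, $g$ is convex on $(0,s)$ and on $(s,\infty)$, since $g''(t)=(s-t)^{-2}-(s+t)^{-2}>0$ on $(0,s)$ and $g''(t)=(t-s)^{-2}-(t+s)^{-2}>0$ on $(s,\infty)$. Second, $h$ inherits convexity on each branch $[0,k^{*}-1]$ and $[k^{*}-1,N_{0}]$: the sign of $h''$ is that of $g''n'-g'n''$, and since $n''\le 0$ on all of $[a_{1},1]$ except a tiny neighbourhood of $a_{1}$, one checks $g''n'-g'n''>0$ on both branches (on the decreasing branch this reduces to $\tfrac{2t}{t^{2}-s^{2}}>\tfrac1t\bigl(1-\tfrac1{\log(t\varphi(N_{0}))}\bigr)$, which holds because the left side exceeds $2$). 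On each piece $[k-1,k]$ with $k\ne p$, convexity of $h$ gives the trapezoidal overestimate $\int_{k-1}^{k}h\le\tfrac12\bigl(h(k-1)+h(k)\bigr)$; the finitely many pieces near $y=0$ where $n''>0$ (their number bounded in terms of $r$) I would instead bound crudely by $g$ at the right endpoint and absorb into $C'''$. For the pole piece I would split at $k^{*}-1$ and use the substitution $w=|s-t|$ together with $\int_{0}^{x}(-\log w)\,\mathrm{d}w=x(1-\log x)\le 1$ and two-sided control of $\rho$ on that short piece to obtain $\int_{p-1}^{p}h\le \eta\,h(p-1)+(1-\eta)\,h(p)+C_{r}$, where $C_{r}$ depends only on $r$; one also records the companion identity $h(p-1)-h(p)=\log\bigl((1-\eta)/\eta\bigr)+O_{r}(1)$.

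Summing, the trapezoidal terms telescope to $\tfrac12h(0)+h(1)+\dots+h(N_{0}-1)+\tfrac12h(N_{0})$; replacing the $k=p$ trapezoidal term by the pole-piece bound changes this by $(\eta-\tfrac12)\bigl(h(p-1)-h(p)\bigr)+C_{r}=(\eta-\tfrac12)\log\bigl((1-\eta)/\eta\bigr)+O_{r}(1)$, and since $(\eta-\tfrac12)\log\bigl((1-\eta)/\eta\bigr)\le0$ for every $\eta\in(0,1)$ the $\tfrac12\log N_{0}$–sized contributions of the pole cancel and this correction is $\le O_{r}(1)$. Therefore $\int_{0}^{N_{0}}h\le\tfrac12h(0)+h(1)+\dots+h(N_{0}-1)+\tfrac12h(N_{0})+O_{r}(1)$, whence
\[
\sum_{j=0}^{N_{0}-1}h(j)+h(N_{0}-\tfrac12)-\int_{0}^{N_{0}}h\;\ge\;\tfrac12h(0)+h(N_{0}-\tfrac12)-\tfrac12h(N_{0})-O_{r}(1).
\]
Since $h(0)=g(a_{1})\ge0$, it remains to show $h(N_{0}-\tfrac12)\ge\tfrac12h(N_{0})-O_{r}(1)$, i.e.\ $g(a_{N_{0}+1})\ge\tfrac12\log\tfrac{1+s}{1-s}-O_{r}(1)$. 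This is exactly the point at which the Ganelius midpoint $a_{N_{0}+1}=\varphi(N_{0}-1/2)/\varphi(N_{0})$ is needed: it supplies a sample within half a $y$–unit of the right endpoint, so $1-a_{N_{0}+1}\asymp_{r}N_{0}^{-1/2}$. When $s\le a_{N_{0}+1}$ the bound is elementary, since $\tfrac{a_{N_{0}+1}+s}{a_{N_{0}+1}-s}\ge\tfrac{s}{1-s}$ gives $g(a_{N_{0}+1})\ge\log\tfrac{1+s}{1-s}-\log(1+1/s)$, and $\log(1+1/s)$ is bounded once $s$ is bounded away from $0$ while $\log\tfrac{1+s}{1-s}$ is itself bounded for small $s$. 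When $s>a_{N_{0}+1}$, the pole lies in the half-length piece $(a_{N_{0}+1},1)$, and applying the pole-piece estimate there (with endpoints $h(N_{0}-\tfrac12)$ and $h(N_{0})$) shows that the residual involves $(1-\eta')\log\tfrac{1+s}{1-s}$ with $1-\eta'$ small, so the right side in fact carries a surplus of order $\log N_{0}$. Collecting the two cases completes the proof with a suitable (possibly negative) $C'''$ depending only on $r$.

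The main obstacle is the interaction of the pole piece with this right-endpoint bookkeeping: one must show that the logarithmic blow-up of the integral near $t=s$ — which is of size $\tfrac12\log N_{0}$ — is absorbed, up to an $N_{0}$- and $s$-independent constant, by the sample values flanking the pole, while making sure no sample value is spent twice; the cancellation $(\eta-\tfrac12)\log\bigl((1-\eta)/\eta\bigr)\le0$ and the extra midpoint sample near $1$ are precisely what make this balance hold uniformly.
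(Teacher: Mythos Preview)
Your substitution $y=n(t)$ is exactly the paper's substitution $u$ (so that your $h$ is the paper's $G_{\eta}$), and the overall architecture---trapezoidal/convexity on most unit pieces, a crude bound on the $\lceil r/\pi^{2}\rceil$ leftmost pieces, and special care for the pole piece and the rightmost piece---is the same. Your convexity check via $g''n'-g'n''$ is a pleasant alternative to the paper's direct computation of $G_{\eta}''$, and your observation $(\theta-\tfrac12)\log\!\bigl((1-\theta)/\theta\bigr)\le 0$ is a neat way to absorb the pole-piece correction.

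There is, however, a genuine structural gap at the right endpoint. After absorbing the pole correction you reduce to proving
\[
h(N_{0}-\tfrac12)\ \ge\ \tfrac12\,h(N_{0})\ -\ O_{r}(1),
\]
and then treat the case $s>a_{N_{0}+1}$ by ``applying the pole-piece estimate on the half-interval $(a_{N_{0}+1},1)$''. But the displayed inequality is \emph{false} as $s\to 1$: with $s=1-\epsilon$ and $1-a_{N_{0}+1}=\delta\asymp_{r}N_{0}^{-1/2}$ one has $h(N_{0}-\tfrac12)\to\log(2/\delta)$ (finite) while $\tfrac12 h(N_{0})=\tfrac12\log\!\bigl((1+s)/(1-s)\bigr)\to\infty$. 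So the reduced claim cannot be salvaged; what actually saves the day is that in this regime the pole sits in the last unit piece $[N_{0}-1,N_{0}]$, and if you do \emph{not} throw away the pole-correction term $(\theta-\tfrac12)\bigl(h(N_{0}-1)-h(N_{0})\bigr)$ but combine it with $h(N_{0}-\tfrac12)$ and $(1-\theta)h(N_{0})$, the coefficients of the common additive constant $\tfrac12\log N_{0}$ sum to $+\tfrac12$, giving a genuine surplus of order $\log N_{0}$. Your ``fix'' hints at this but, as written, it changes the decomposition after the reduction has already discarded the needed term, so the logic does not close.

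The paper handles exactly this issue by a three-way case split on the location of $\eta^{2}$ relative to $N_{0}-2$ and $N_{0}-1$, together with two explicit lemmas giving lower bounds for $G_{\eta}(u)$ and $-\!\int_{m}^{m+1}G_{\eta}$ of the form $\pm\tfrac12\log m+O_{r}(1)$; these play the role of your pole-piece bound and companion identity but are stated and proved with the uniformity in $N_{0}$ made explicit. To repair your argument, either adopt that case split, or---closer to your line---keep the full lower bound $\tfrac12 h(0)+(\tfrac12-\theta)h(N_{0}-1)+h(N_{0}-\tfrac12)-(1-\theta)h(N_{0})-C_{r}$ in the case $p=N_{0}$ and verify directly (via $h(y)=-\log|y-\eta^{2}|+\tfrac12\log N_{0}+O_{r}(1)$ for $|y-\eta^{2}|\le 1$) that the $\tfrac12\log N_{0}$ contributions add up with a positive coefficient. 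Either route works; what does not work is first collapsing the pole correction to $O_{r}(1)$ and then trying to recover it.
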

%-----

If Lemmas~\ref{lem:disc_plus}--\ref{lem:cont_vs_disc} are proven, 
by adding Inequalities \eqref{eq:disc_plus}--\eqref{eq:cont_vs_disc}, 
we have
\begin{align}
\sum_{k = 1}^{N} \log \left| \frac{s+a_{k}}{s-a_{k}} \right| 
\geq
\pi \sqrt{N_{0} r} + r \log s + (C'' + C''').
\notag 
\end{align}
Furthermore, we have the inequality
\begin{align}
\pi \sqrt{N_{0} r} \geq \pi \sqrt{N r} - c, 
\notag 
\end{align}
where $c$ is a real number independent of $N$. 
By using these inequalities, we can obtain Inequality~\eqref{eq:mod_Gan_equiv}. 

Lemmas~\ref{lem:disc_plus} and~\ref{lem:cont_integral} 
have already been proven by Jang \& Haber \cite{bib:JangHaber2001} 
in the proof on page 221 of \cite{bib:JangHaber2001} 
and ``Proof of (2)'' on pages~218--219 of \cite{bib:JangHaber2001}, respectively, 
without using the assumption $r < 1$. 
On the other hand, 
Inequality \eqref{eq:cont_vs_disc} in Lemma~\ref{lem:cont_vs_disc} corresponds to Inequality (3) on page 218 of \cite{bib:JangHaber2001}, 
where the assumption $r < 1$ is employed. 
Therefore,  we need to prove Lemma~\ref{lem:cont_vs_disc} by ourselves 
in order to remove this assumption. 

\subsection{Proof of Lemma~\ref{lem:cont_vs_disc}}

For $s = 0$, the LHS of \eqref{eq:cont_vs_disc} is zero. 
Furthermore, for $s = a_{k}$, the first term in the LHS is infinity. 
Therefore,  we assume that $s \in (0,1] \setminus \{ a_{k} \}$ in the rest of this section. 
By letting
\begin{align}
g(s, t) := \log \left| \frac{s+t}{s-t} \right|, 
\notag
\end{align}
we have
\begin{align}
\log \left| \frac{s+a_{k}}{s-a_{k}} \right| 
= 
\begin{cases}
g(s\, \varphi(N_{0}), \, \varphi(k-1)) & (k = 1, 2, \ldots, N_{0}), \\
g(s\, \varphi(N_{0}), \, \varphi(k-3/2)) & (k = N_{0} + 1),
\end{cases}
\notag
\end{align}
and 
\begin{align}
\int_{1/\varphi(N_{0})}^{1} \log \left| \frac{s + t}{s - t} \right| \frac{2r \log(t\, \varphi(N_{0}))}{\pi^{2} t} \, \dd t
= 
\int_{0}^{N_{0}} g(s\, \varphi(N_{0}), \, \varphi(u)) \, \dd u,
\notag
\end{align}
where the variable transformation $t = \varphi(u)/\varphi(N_{0})$ is employed. 
Furthermore, by letting 
\begin{align}
c_{r} := \frac{\pi}{2 \sqrt{r}}
\qquad
\text{and}
\qquad 
\eta := \frac{1}{2c_{r}}\log (s\, \varphi(N_{0})), 
\label{eq:alt_numbers}
\end{align}
we have
\begin{align}
& g(s\, \varphi(N_{0}), \, \varphi(u)) 
= 
g(\exp(2c_{r} \eta), \, \exp(2c_{r} \sqrt{u})) \notag \\
& =
\log \left| \frac{ 1 + \exp[2c_{r} (\sqrt{u} - \eta)] }{ 1 - \exp[2c_{r} (\sqrt{u} - \eta)] } \right| 
=
- \log \left| \tanh[c_{r} (\sqrt{u} - \eta)] \right|.
\notag
\end{align}
Note that $\eta$ in \eqref{eq:alt_numbers} satisfies $-\infty < \eta \leq \sqrt{N_{0}}$ 
and does not equal the square root of any nonnegative integer because $s \in (0,1] \setminus \{ a_{k} \}$.
From these, by letting
\begin{align}
G_{\eta}(u) := - \log \left| \tanh[c_{r} (\sqrt{u} - \eta)] \right|, 
\label{eq:def_logtanh}
\end{align}
we have
\begin{align}
\label{eq:cont_vs_disc_trans}
& \sum_{k = 1}^{N_{0}+1} \log \left| \frac{s + a_{k}}{s - a_{k}} \right| 
-
\int_{1/\varphi(N_{0})}^{1} \log \left| \frac{s + t}{s - t} \right| \frac{2r \log(t\, \varphi(N_{0}))}{\pi^{2} t} \, \dd t \\
& = 
\sum_{\ell = 0}^{N_{0}-1} G_{\eta}(\ell) + G_{\eta}(N_{0} - 1/2) 
- 
\int_{0}^{N_{0}} G_{\eta}(u)\, \dd u. \notag
\end{align}
In the following, 
we prove the RHS of \eqref{eq:cont_vs_disc_trans} is bounded from below for $\eta$ with $-\infty < \eta \leq \sqrt{N_{0}}$,
which completes the proof of Lemma~\ref{lem:cont_vs_disc}. 
Note that, as shown by Figure~\ref{fig:graphs_of_G}, 
the function $G_{\eta}(u)$ in \eqref{eq:def_logtanh} is nonnegative and has a singularity at 
\begin{align}
u = \eta^{2} = \frac{r}{\pi^{2}} [\log (s\, \varphi(N_{0}))]^{2}
\label{eq:singularity}
\end{align}
in the case that $\eta > 0$. 

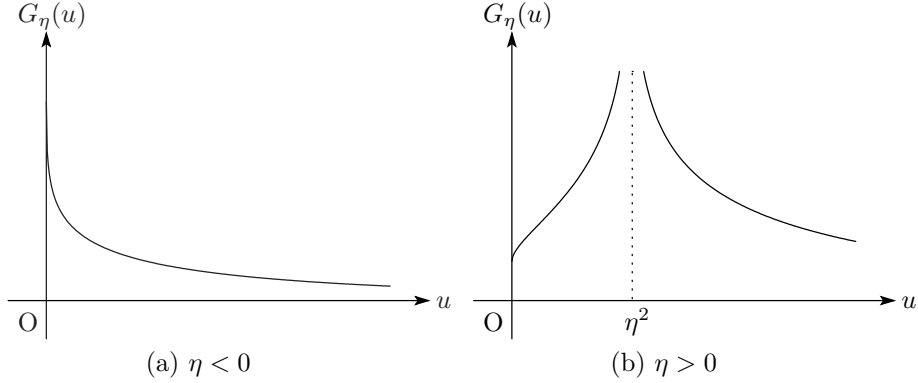
\begin{figure}[t]
\centering
\begin{minipage}{0.48\linewidth}
\centering
%WinTpicVersion4.32a
{\unitlength 0.1in%
\begin{picture}(24.4000,17.5000)(4.0000,-20.0000)%
% STR 2 0 3 0 Black White  
% 4 2840 1740 2840 1840 2 0 0 0
% $u$
\put(28.4000,-18.4000){\makebox(0,0)[lb]{$u$}}%
% STR 2 0 3 0 Black White  
% 4 650 1860 650 1960 2 0 0 0
% O
\put(6.5000,-19.6000){\makebox(0,0)[lb]{O}}%
% STR 2 0 3 0 Black White  
% 4 650 280 650 380 2 0 0 0
% $G_{\eta}(u)$
\put(6.5000,-3.8000){\makebox(0,0)[lb]{$G_{\eta}(u)$}}%
% FUNC 2 0 3 0 Black White  
% 10 400 600 2600 1800 800 1800 1200 1800 800 1400 400 600 2600 1800 0 2 0 0 0 0
% -2log(abs((1-exp(sqrt(x)+0.1))/(1+exp(sqrt(x)+0.1))))
\special{pn 8}%
\special{pa 800 759}%
\special{pa 805 1019}%
\special{pa 810 1087}%
\special{pa 815 1131}%
\special{pa 820 1164}%
\special{pa 825 1191}%
\special{pa 830 1213}%
\special{pa 835 1233}%
\special{pa 840 1250}%
\special{pa 845 1265}%
\special{pa 850 1279}%
\special{pa 860 1303}%
\special{pa 870 1323}%
\special{pa 880 1341}%
\special{pa 890 1357}%
\special{pa 910 1385}%
\special{pa 915 1391}%
\special{pa 920 1396}%
\special{pa 925 1402}%
\special{pa 930 1407}%
\special{pa 935 1413}%
\special{pa 940 1418}%
\special{pa 945 1422}%
\special{pa 955 1432}%
\special{pa 990 1460}%
\special{pa 995 1463}%
\special{pa 1000 1467}%
\special{pa 1015 1476}%
\special{pa 1020 1480}%
\special{pa 1035 1489}%
\special{pa 1040 1491}%
\special{pa 1055 1500}%
\special{pa 1060 1502}%
\special{pa 1065 1505}%
\special{pa 1070 1507}%
\special{pa 1075 1510}%
\special{pa 1080 1512}%
\special{pa 1085 1515}%
\special{pa 1095 1519}%
\special{pa 1100 1522}%
\special{pa 1165 1548}%
\special{pa 1170 1549}%
\special{pa 1185 1555}%
\special{pa 1190 1556}%
\special{pa 1200 1560}%
\special{pa 1205 1561}%
\special{pa 1210 1563}%
\special{pa 1215 1564}%
\special{pa 1220 1566}%
\special{pa 1225 1567}%
\special{pa 1230 1569}%
\special{pa 1235 1570}%
\special{pa 1240 1572}%
\special{pa 1245 1573}%
\special{pa 1250 1575}%
\special{pa 1255 1576}%
\special{pa 1260 1578}%
\special{pa 1270 1580}%
\special{pa 1275 1582}%
\special{pa 1285 1584}%
\special{pa 1290 1586}%
\special{pa 1305 1589}%
\special{pa 1310 1591}%
\special{pa 1330 1595}%
\special{pa 1335 1597}%
\special{pa 1370 1604}%
\special{pa 1375 1606}%
\special{pa 1440 1619}%
\special{pa 1445 1619}%
\special{pa 1485 1627}%
\special{pa 1490 1627}%
\special{pa 1515 1632}%
\special{pa 1520 1632}%
\special{pa 1540 1636}%
\special{pa 1545 1636}%
\special{pa 1560 1639}%
\special{pa 1565 1639}%
\special{pa 1580 1642}%
\special{pa 1585 1642}%
\special{pa 1600 1645}%
\special{pa 1605 1645}%
\special{pa 1615 1647}%
\special{pa 1620 1647}%
\special{pa 1630 1649}%
\special{pa 1635 1649}%
\special{pa 1645 1651}%
\special{pa 1650 1651}%
\special{pa 1660 1653}%
\special{pa 1665 1653}%
\special{pa 1675 1655}%
\special{pa 1680 1655}%
\special{pa 1690 1657}%
\special{pa 1695 1657}%
\special{pa 1700 1658}%
\special{pa 1705 1658}%
\special{pa 1715 1660}%
\special{pa 1720 1660}%
\special{pa 1725 1661}%
\special{pa 1730 1661}%
\special{pa 1735 1662}%
\special{pa 1740 1662}%
\special{pa 1750 1664}%
\special{pa 1755 1664}%
\special{pa 1760 1665}%
\special{pa 1765 1665}%
\special{pa 1770 1666}%
\special{pa 1775 1666}%
\special{pa 1785 1668}%
\special{pa 1790 1668}%
\special{pa 1795 1669}%
\special{pa 1800 1669}%
\special{pa 1805 1670}%
\special{pa 1810 1670}%
\special{pa 1815 1671}%
\special{pa 1820 1671}%
\special{pa 1825 1672}%
\special{pa 1830 1672}%
\special{pa 1835 1673}%
\special{pa 1840 1673}%
\special{pa 1845 1674}%
\special{pa 1850 1674}%
\special{pa 1855 1675}%
\special{pa 1860 1675}%
\special{pa 1865 1676}%
\special{pa 1870 1676}%
\special{pa 1875 1677}%
\special{pa 1880 1677}%
\special{pa 1885 1678}%
\special{pa 1890 1678}%
\special{pa 1895 1679}%
\special{pa 1900 1679}%
\special{pa 1905 1680}%
\special{pa 1915 1680}%
\special{pa 1920 1681}%
\special{pa 1925 1681}%
\special{pa 1930 1682}%
\special{pa 1935 1682}%
\special{pa 1940 1683}%
\special{pa 1945 1683}%
\special{pa 1950 1684}%
\special{pa 1960 1684}%
\special{pa 1965 1685}%
\special{pa 1970 1685}%
\special{pa 1975 1686}%
\special{pa 1980 1686}%
\special{pa 1985 1687}%
\special{pa 1995 1687}%
\special{pa 2000 1688}%
\special{pa 2005 1688}%
\special{pa 2010 1689}%
\special{pa 2020 1689}%
\special{pa 2025 1690}%
\special{pa 2030 1690}%
\special{pa 2035 1691}%
\special{pa 2045 1691}%
\special{pa 2050 1692}%
\special{pa 2055 1692}%
\special{pa 2060 1693}%
\special{pa 2070 1693}%
\special{pa 2075 1694}%
\special{pa 2085 1694}%
\special{pa 2090 1695}%
\special{pa 2095 1695}%
\special{pa 2100 1696}%
\special{pa 2110 1696}%
\special{pa 2115 1697}%
\special{pa 2125 1697}%
\special{pa 2130 1698}%
\special{pa 2140 1698}%
\special{pa 2145 1699}%
\special{pa 2150 1699}%
\special{pa 2155 1700}%
\special{pa 2165 1700}%
\special{pa 2170 1701}%
\special{pa 2180 1701}%
\special{pa 2185 1702}%
\special{pa 2195 1702}%
\special{pa 2200 1703}%
\special{pa 2210 1703}%
\special{pa 2215 1704}%
\special{pa 2230 1704}%
\special{pa 2235 1705}%
\special{pa 2245 1705}%
\special{pa 2250 1706}%
\special{pa 2260 1706}%
\special{pa 2265 1707}%
\special{pa 2275 1707}%
\special{pa 2280 1708}%
\special{pa 2290 1708}%
\special{pa 2295 1709}%
\special{pa 2310 1709}%
\special{pa 2315 1710}%
\special{pa 2325 1710}%
\special{pa 2330 1711}%
\special{pa 2345 1711}%
\special{pa 2350 1712}%
\special{pa 2360 1712}%
\special{pa 2365 1713}%
\special{pa 2380 1713}%
\special{pa 2385 1714}%
\special{pa 2400 1714}%
\special{pa 2405 1715}%
\special{pa 2415 1715}%
\special{pa 2420 1716}%
\special{pa 2435 1716}%
\special{pa 2440 1717}%
\special{pa 2455 1717}%
\special{pa 2460 1718}%
\special{pa 2475 1718}%
\special{pa 2480 1719}%
\special{pa 2495 1719}%
\special{pa 2500 1720}%
\special{pa 2515 1720}%
\special{pa 2520 1721}%
\special{pa 2535 1721}%
\special{pa 2540 1722}%
\special{pa 2555 1722}%
\special{pa 2560 1723}%
\special{pa 2580 1723}%
\special{pa 2585 1724}%
\special{pa 2600 1724}%
\special{fp}%
% VECTOR 2 0 3 0 Black White  
% 2 800 2000 800 400
% 
\special{pn 8}%
\special{pa 800 2000}%
\special{pa 800 400}%
\special{fp}%
\special{sh 1}%
\special{pa 800 400}%
\special{pa 780 467}%
\special{pa 800 453}%
\special{pa 820 467}%
\special{pa 800 400}%
\special{fp}%
% VECTOR 2 0 3 0 Black White  
% 2 600 1800 2800 1800
% 
\special{pn 8}%
\special{pa 600 1800}%
\special{pa 2800 1800}%
\special{fp}%
\special{sh 1}%
\special{pa 2800 1800}%
\special{pa 2733 1780}%
\special{pa 2747 1800}%
\special{pa 2733 1820}%
\special{pa 2800 1800}%
\special{fp}%
\end{picture}}% \\
(a) $\eta < 0$
\end{minipage}
\begin{minipage}{0.48\linewidth}
\centering
%WinTpicVersion4.32a
{\unitlength 0.1in%
\begin{picture}(24.4000,17.5000)(4.0000,-20.0000)%
% STR 2 0 3 0 Black White  
% 4 2840 1740 2840 1840 2 0 0 0
% $u$
\put(28.4000,-18.4000){\makebox(0,0)[lb]{$u$}}%
% STR 2 0 3 0 Black White  
% 4 650 1860 650 1960 2 0 0 0
% O
\put(6.5000,-19.6000){\makebox(0,0)[lb]{O}}%
% STR 2 0 3 0 Black White  
% 4 650 280 650 380 2 0 0 0
% $G_{\eta}(u)$
\put(6.5000,-3.8000){\makebox(0,0)[lb]{$G_{\eta}(u)$}}%
% FUNC 2 0 3 0 Black White  
% 10 400 600 2600 1800 800 1800 1200 1800 800 1400 400 600 2600 1800 0 2 0 0 0 0
% -2log(abs((1-exp(sqrt(x)-1.25))/(1+exp(sqrt(x)-1.25))))
\special{pn 8}%
\special{pa 800 1595}%
\special{pa 805 1569}%
\special{pa 810 1557}%
\special{pa 820 1539}%
\special{pa 840 1511}%
\special{pa 865 1481}%
\special{pa 870 1476}%
\special{pa 875 1470}%
\special{pa 880 1465}%
\special{pa 885 1459}%
\special{pa 895 1449}%
\special{pa 900 1443}%
\special{pa 920 1423}%
\special{pa 925 1417}%
\special{pa 945 1397}%
\special{pa 950 1391}%
\special{pa 965 1376}%
\special{pa 970 1370}%
\special{pa 985 1355}%
\special{pa 990 1349}%
\special{pa 1000 1339}%
\special{pa 1005 1333}%
\special{pa 1010 1328}%
\special{pa 1015 1322}%
\special{pa 1020 1317}%
\special{pa 1025 1311}%
\special{pa 1030 1306}%
\special{pa 1040 1294}%
\special{pa 1045 1289}%
\special{pa 1095 1229}%
\special{pa 1100 1222}%
\special{pa 1105 1216}%
\special{pa 1110 1209}%
\special{pa 1115 1203}%
\special{pa 1120 1196}%
\special{pa 1125 1190}%
\special{pa 1150 1155}%
\special{pa 1155 1147}%
\special{pa 1160 1140}%
\special{pa 1165 1132}%
\special{pa 1170 1125}%
\special{pa 1190 1093}%
\special{pa 1195 1084}%
\special{pa 1200 1076}%
\special{pa 1220 1040}%
\special{pa 1225 1030}%
\special{pa 1230 1021}%
\special{pa 1245 991}%
\special{pa 1260 958}%
\special{pa 1265 946}%
\special{pa 1270 935}%
\special{pa 1275 922}%
\special{pa 1280 910}%
\special{pa 1285 897}%
\special{pa 1300 855}%
\special{pa 1310 825}%
\special{pa 1325 774}%
\special{pa 1335 736}%
\special{pa 1340 715}%
\special{pa 1345 693}%
\special{pa 1350 670}%
\special{pa 1355 645}%
\special{pa 1360 619}%
\special{pa 1363 600}%
\special{fp}%
\special{pa 1490 600}%
\special{pa 1490 601}%
\special{pa 1495 626}%
\special{pa 1500 649}%
\special{pa 1505 671}%
\special{pa 1510 691}%
\special{pa 1520 729}%
\special{pa 1525 746}%
\special{pa 1535 778}%
\special{pa 1540 792}%
\special{pa 1545 807}%
\special{pa 1560 846}%
\special{pa 1565 858}%
\special{pa 1580 891}%
\special{pa 1585 901}%
\special{pa 1590 912}%
\special{pa 1595 921}%
\special{pa 1600 931}%
\special{pa 1610 949}%
\special{pa 1615 957}%
\special{pa 1620 966}%
\special{pa 1635 990}%
\special{pa 1640 997}%
\special{pa 1645 1005}%
\special{pa 1665 1033}%
\special{pa 1670 1039}%
\special{pa 1675 1046}%
\special{pa 1700 1076}%
\special{pa 1705 1081}%
\special{pa 1710 1087}%
\special{pa 1715 1092}%
\special{pa 1720 1098}%
\special{pa 1755 1133}%
\special{pa 1760 1137}%
\special{pa 1765 1142}%
\special{pa 1770 1146}%
\special{pa 1775 1151}%
\special{pa 1780 1155}%
\special{pa 1785 1160}%
\special{pa 1825 1192}%
\special{pa 1830 1195}%
\special{pa 1840 1203}%
\special{pa 1845 1206}%
\special{pa 1850 1210}%
\special{pa 1855 1213}%
\special{pa 1860 1217}%
\special{pa 1865 1220}%
\special{pa 1870 1224}%
\special{pa 1880 1230}%
\special{pa 1885 1234}%
\special{pa 1950 1273}%
\special{pa 1955 1275}%
\special{pa 1970 1284}%
\special{pa 1975 1286}%
\special{pa 1980 1289}%
\special{pa 1985 1291}%
\special{pa 1990 1294}%
\special{pa 1995 1296}%
\special{pa 2005 1302}%
\special{pa 2015 1306}%
\special{pa 2020 1309}%
\special{pa 2025 1311}%
\special{pa 2030 1314}%
\special{pa 2040 1318}%
\special{pa 2045 1321}%
\special{pa 2060 1327}%
\special{pa 2065 1330}%
\special{pa 2095 1342}%
\special{pa 2100 1345}%
\special{pa 2140 1361}%
\special{pa 2145 1362}%
\special{pa 2180 1376}%
\special{pa 2185 1377}%
\special{pa 2200 1383}%
\special{pa 2205 1384}%
\special{pa 2220 1390}%
\special{pa 2225 1391}%
\special{pa 2235 1395}%
\special{pa 2240 1396}%
\special{pa 2250 1400}%
\special{pa 2255 1401}%
\special{pa 2260 1403}%
\special{pa 2265 1404}%
\special{pa 2275 1408}%
\special{pa 2280 1409}%
\special{pa 2285 1411}%
\special{pa 2290 1412}%
\special{pa 2295 1414}%
\special{pa 2300 1415}%
\special{pa 2305 1417}%
\special{pa 2310 1418}%
\special{pa 2315 1420}%
\special{pa 2320 1421}%
\special{pa 2325 1423}%
\special{pa 2330 1424}%
\special{pa 2335 1426}%
\special{pa 2345 1428}%
\special{pa 2350 1430}%
\special{pa 2355 1431}%
\special{pa 2360 1433}%
\special{pa 2370 1435}%
\special{pa 2375 1437}%
\special{pa 2385 1439}%
\special{pa 2390 1441}%
\special{pa 2400 1443}%
\special{pa 2405 1445}%
\special{pa 2415 1447}%
\special{pa 2420 1449}%
\special{pa 2435 1452}%
\special{pa 2440 1454}%
\special{pa 2455 1457}%
\special{pa 2460 1459}%
\special{pa 2475 1462}%
\special{pa 2480 1464}%
\special{pa 2505 1469}%
\special{pa 2510 1471}%
\special{pa 2545 1478}%
\special{pa 2550 1480}%
\special{pa 2600 1490}%
\special{fp}%
% LINE 2 2 3 0 Black White  
% 4 1430 1790 1430 600 1430 600 1430 600
% 
\special{pn 8}%
\special{pa 1430 1790}%
\special{pa 1430 600}%
\special{dt 0.045}%
\special{pa 1430 600}%
\special{pa 1430 600}%
\special{dt 0.045}%
% STR 2 0 3 0 Black White  
% 4 1390 1880 1390 1980 2 0 0 0
% $\eta^{2}$
\put(13.9000,-19.8000){\makebox(0,0)[lb]{$\eta^{2}$}}%
% VECTOR 2 0 3 0 Black White  
% 2 800 2000 800 400
% 
\special{pn 8}%
\special{pa 800 2000}%
\special{pa 800 400}%
\special{fp}%
\special{sh 1}%
\special{pa 800 400}%
\special{pa 780 467}%
\special{pa 800 453}%
\special{pa 820 467}%
\special{pa 800 400}%
\special{fp}%
% VECTOR 2 0 3 0 Black White  
% 2 600 1800 2800 1800
% 
\special{pn 8}%
\special{pa 600 1800}%
\special{pa 2800 1800}%
\special{fp}%
\special{sh 1}%
\special{pa 2800 1800}%
\special{pa 2733 1780}%
\special{pa 2747 1800}%
\special{pa 2733 1820}%
\special{pa 2800 1800}%
\special{fp}%
\end{picture}}% \\
(b) $\eta > 0$
\end{minipage}
\caption{Graphs of the function $G_{\eta}$. }
\label{fig:graphs_of_G}
\end{figure}

The RHS of \eqref{eq:cont_vs_disc_trans} can be rewritten in the form
\begin{align}
\label{eq:cont_vs_disc_trans_rewritten}
& \frac{1}{2} G_{\eta}(0) 
+ \sum_{\ell = 0}^{N_{0} - 2} \left[ \frac{1}{2} G_{\eta}(\ell) + \frac{1}{2} G_{\eta}(\ell+1) - \int_{\ell}^{\ell + 1} G_{\eta}(u)\, \dd u \right] \\
& + \frac{1}{2} G_{\eta}(N_{0} - 1) 
+ \left[ G_{\eta}(N_{0} - 1/2) - \int_{N_{0} - 1}^{N_{0}} G_{\eta}(u)\, \dd u \right].
\notag 
\end{align}
The first and third terms in \eqref{eq:cont_vs_disc_trans_rewritten}
may be ignored because they are nonnegative. 
For the second and fourth terms in \eqref{eq:cont_vs_disc_trans_rewritten}, 
we divide the arguments for their estimates into the following three steps. 

\begin{description}
\item[Step 1] Estimate of some terms in the sum of the second term in \eqref{eq:cont_vs_disc_trans_rewritten} by the convexity of $G_{\eta}$.
Each term in the sum is the error of the trapezoidal approximation of the integral of $G_{\eta}$. 
The error on the interval $[\ell, \ell+1]$ is nonnegative if $G_{\eta}$ is convex on the interval. 
Therefore, we bound the error from below by zero on the interval where $G_{\eta}$ is convex. 
\item[Step 2] Estimate of the other errors in the second term on the intervals in which the singularity $u = \eta^{2}$ is not contained. 
\item[Step 3] Estimate of the error in the second term on the interval in which the singularity $u = \eta^{2}$ is contained if it exists and 
estimate of the fourth term in \eqref{eq:cont_vs_disc_trans_rewritten}. 
Note that this step is necessary only if $\eta > 0$. 
Therefore, we assume that $\eta > 0$ in this step.
Then, the singularity is contained in $(N_{0} - 1, N_{0})$ or $(0, N_{0}-1)$. 
In the former case, we have only to estimate the fourth term in this step 
because all the errors in the second term are already estimated in the previous step. 
\end{description}

\subsubsection{Step 1}

We show a sufficient condition for the convexity of $G_{\eta}$. 

\begin{lem}
\label{lem:convex}
If $u > r/\pi^{2}$ and $u \neq \eta^{2}$, then the function $G_{\eta}(u)$ given by \eqref{eq:def_logtanh} is convex. 
\end{lem}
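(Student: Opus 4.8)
The plan is to compute the second derivative of $G_\eta(u) = -\log|\tanh[c_r(\sqrt{u}-\eta)]|$ directly and show it is nonnegative under the stated hypothesis $u > r/\pi^2$, $u\neq\eta^2$. First I would substitute $v = \sqrt{u}$, so that $u = v^2$ and differentiation with respect to $u$ relates to differentiation with respect to $v$ via $\dd/\dd u = (1/(2v))\, \dd/\dd v$. Writing $G_\eta(u) = H(v)$ with $H(v) = -\log|\tanh[c_r(v-\eta)]|$, the chain rule gives $G_\eta'(u) = H'(v)/(2v)$ and then $G_\eta''(u) = H''(v)/(4v^2) - H'(v)/(4v^3)$, so that $4v^3 G_\eta''(u) = v\,H''(v) - H'(v)$. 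Thus it suffices to prove $v\,H''(v) \ge H'(v)$ for $v > \sqrt{r}/\pi = 1/(2c_r)$ (this is exactly the condition $u > r/\pi^2$) and $v \neq \eta$.

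Next I would compute $H'$ and $H''$ explicitly. Using $\frac{\dd}{\dd w}\log|\tanh w| = 1/(\sinh w\cosh w) = 2/\sinh(2w)$, we get $H'(v) = -2c_r/\sinh[2c_r(v-\eta)]$ and, differentiating again, $H''(v) = 4c_r^2\cosh[2c_r(v-\eta)]/\sinh^2[2c_r(v-\eta)]$. Setting $w = 2c_r(v-\eta)$ for brevity, the inequality $v\,H''(v)\ge H'(v)$ becomes, after multiplying through by $\sinh^2 w > 0$,
\begin{align}
\notag
4c_r^2 v \cosh w + 2c_r \sinh w \ge 0,
\end{align}
i.e. $2c_r v\cosh w + \sinh w \ge 0$. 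Since $\cosh w \ge 1$ and $\cosh w \ge |\sinh w|$ always, and since $2c_r v > 1$ by the hypothesis $v > 1/(2c_r)$, we obtain $2c_r v\cosh w + \sinh w > \cosh w + \sinh w = \ee^{w} > 0$. Hence $G_\eta''(u) \ge 0$ on the stated region, which is the claim.

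The argument is essentially a one-variable calculus computation, so there is no deep obstacle; the only point requiring care is the bookkeeping of the substitution $u = v^2$ and making sure the threshold $r/\pi^2$ matches $v = 1/(2c_r)$, together with the observation that the sign of $\sinh w$ can be negative (when $v < \eta$, i.e. $u < \eta^2$) but is always dominated by $2c_r v\cosh w$ once $2c_r v > 1$. One should also note that $G_\eta$ is smooth on $\{u > 0 : u \neq \eta^2\}$ so that the second derivative exists there, and that the case $\eta \le 0$ is covered automatically since then $\eta^2$ is irrelevant for $u > 0$ (the function has no singularity on the positive axis).
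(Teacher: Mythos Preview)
Your proof is correct and follows essentially the same approach as the paper: both compute $G_\eta''(u)$ directly and reduce positivity to the inequality $2c_r\sqrt{u}\cosh w + \sinh w > 0$ (the paper writes the equivalent form $\tanh w + 2c_r u^{1/2} > 0$), which holds because $2c_r\sqrt{u} > 1$ when $u > r/\pi^2$. Your substitution $v=\sqrt{u}$ is a minor organizational difference, not a different method.
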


\begin{proof}
Since
\begin{align}
G_{\eta}'(u) 
& =
- \frac{c_{r} u^{-1/2}}{\sinh[2c_{r} (\sqrt{u} - \eta)]}, 
\notag 
\end{align}
we have
\begin{align}
G_{\eta}''(u) 
& = \frac{c_{r}}{ 2 u^{3/2}}\, \frac{ \cosh[2c_{r} (\sqrt{u} - \eta)] }{ \sinh^{2}[2c_{r} (\sqrt{u} - \eta)] }
\left(
\tanh[2c_{r} (\sqrt{u} - \eta)] + 2c_{r} u^{1/2}
\right).
\end{align}
If $u > r/\pi^{2}$ and $u \neq \eta^{2}$, then $2c_{r} u^{1/2} > 1$ and $G_{\eta}''(u) > 0$.
\end{proof}

We define indexes $\ell_{\mathrm{c}}$ and $\ell_{\mathrm{s}}$ by 
\begin{align}
\ell_{\mathrm{c}} := \lceil r/\pi^{2} \rceil \qquad \text{and} \qquad \ell_{\mathrm{s}} := \lfloor \eta^{2} \rfloor,
\end{align} 
respectively. 
By Lemma~\ref{lem:convex}, we have
\begin{align}
\sum_{
\begin{subarray}{c}
\ell_{\mathrm{c}} \leq \ell \leq N_{0}-2, \\
\ell \neq \ell_{\mathrm{s}}
\end{subarray}} 
\left[ \frac{1}{2} G_{\eta}(\ell) + \frac{1}{2} G_{\eta}(\ell+1) - \int_{\ell}^{\ell + 1} G_{\eta}(u)\, \dd u \right]
\geq 0,
\end{align}
which is the desired inequality in Step 1.

\subsubsection{Step 2}

We start with the estimate 
\begin{align}
\sum_{
\begin{subarray}{c}
0 \leq \ell \leq \ell_{\mathrm{c}}, \\
\ell \neq \ell_{\mathrm{s}}
\end{subarray}} 
\left[ \frac{1}{2} G_{\eta}(\ell) + \frac{1}{2} G_{\eta}(\ell+1) - \int_{\ell}^{\ell + 1} G_{\eta}(u)\, \dd u \right]
\geq
\sum_{
\begin{subarray}{c}
0 \leq \ell \leq \ell_{\mathrm{c}}, \\
\ell \neq \ell_{\mathrm{s}}
\end{subarray}} 
\int_{\ell}^{\ell + 1} -G_{\eta}(u)\, \dd u. 
\label{eq:Step2first}
\end{align}
By the variable transformation $v = \sqrt{u}$, for $\ell$ with $0 \leq \ell \leq \ell_{\mathrm{c}}$, we have
\begin{align}
\int_{\ell}^{\ell + 1} -G_{\eta}(u)\, \dd u
& = 
2 \int_{\sqrt{\ell}}^{\sqrt{\ell + 1}} \log \left| \tanh [c_{r} (v - \eta)] \right| v \, \dd v \notag \\
& \geq 
2 \sqrt{\ell_{\mathrm{c}} + 1} \int_{\sqrt{\ell}}^{\sqrt{\ell + 1}} \log \left| \tanh [c_{r} (v - \eta)] \right| \, \dd v.
\notag
\end{align}
Then, we have
\begin{align}
\label{eq:intlogtanh}
& \sum_{
\begin{subarray}{c}
0 \leq \ell \leq \ell_{\mathrm{c}}, \\
\ell \neq \ell_{\mathrm{s}}
\end{subarray}} 
\int_{\ell}^{\ell + 1} -G_{\eta}(u)\, \dd u \\
& \geq
2 \sqrt{\ell_{\mathrm{c}} + 1} \int_{[0, \sqrt{\ell_{\mathrm{c}} + 1}] \setminus [\sqrt{\ell_{\mathrm{s}}}, \sqrt{\ell_{\mathrm{s}} + 1}]} 
\log \left| \tanh [c_{r} (v - \eta)] \right| \, \dd v \notag \\
& \geq
2 \sqrt{\ell_{\mathrm{c}} + 1} \int_{-\infty}^{\infty} 
\log \left| \tanh [c_{r} (v - \eta)] \right| \, \dd v \notag \\
& =
2 \sqrt{\ell_{\mathrm{c}} + 1} \left( - \frac{\pi^{2}}{4 c_{r}} \right)
= 
- \pi \sqrt{r} \sqrt{ \left\lceil \frac{r}{\pi^{2}} \right\rceil + 1 }. 
\notag
\end{align}
Inequalities~\eqref{eq:Step2first} and~\eqref{eq:intlogtanh} give the desired estimate in Step 2.

\subsubsection{Step 3}

We need to consider the following three cases: 
(i) $\eta^{2} \in (N_{0}-1, N_{0})$,  
(ii) $\eta^{2} \in (N_{0}-2, N_{0}-1)$, and 
(iii) $\eta^{2} \in (0, N_{0}-2)$. 
In Case (i), 
we have $\ell_{\mathrm{s}} = N_{0} - 1$ and 
have only to estimate the fourth term in \eqref{eq:cont_vs_disc_trans_rewritten}:
\begin{align}
G_{\eta}(N_{0} - 1/2) - \int_{N_{0} - 1}^{N_{0}} G_{\eta}(u)\, \dd u.
\label{eq:fourth_term}
\end{align}
In Cases (ii) and (iii), 
we have $0 \leq \ell_{\mathrm{s}} \leq N_{0} - 2$ and need to estimate the value in~\eqref{eq:fourth_term} and 
the error
\begin{align}
\frac{1}{2} G_{\eta}(\ell_{\mathrm{s}}) + \frac{1}{2} G_{\eta}(\ell_{\mathrm{s}}+1) 
- \int_{\ell_{\mathrm{s}}}^{\ell_{\mathrm{s}} + 1} G_{\eta}(u)\, \dd u. 
\label{eq:singular_error}
\end{align}

For these estimates, we need the following two lemmas. 

%--------
\begin{lem}
\label{lem:G_lower}
Let $m$ be an integer satisfying $m > 1$ and $m - 1 < \eta^{2} < m+1$, and 
let $u$ be a real number satisfying $m \leq u \leq m+1$. 
Then, we have
\begin{align}
G_{\eta}(u) \geq \frac{1}{2} \log (m-1) - \log c_{r}. 
\notag
\end{align}
\end{lem}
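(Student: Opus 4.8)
The plan is to bound $G_{\eta}$ from below by showing that the argument of the hyperbolic tangent in \eqref{eq:def_logtanh} is small on the interval in question, so that $\left|\tanh[c_{r}(\sqrt{u}-\eta)]\right|$ stays comfortably below $1$. I work under the assumption $\eta>0$, which is the only situation in which this lemma is used (see Step~3); note that the asserted inequality is in fact false for $\eta$ negative, since then $\sqrt{u}-\eta$ can be arbitrarily large while the right-hand side need not be, so the positivity of $\eta$ must genuinely be imported from the context.

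First I would make explicit the localization of $\sqrt{u}$ and of $\eta$ forced by the hypotheses: from $m\le u\le m+1$ one gets $\sqrt{u}\in[\sqrt{m},\sqrt{m+1}]$, and from $m-1<\eta^{2}<m+1$ together with $\eta>0$ one gets $\eta\in(\sqrt{m-1},\sqrt{m+1})$. Since both quantities then lie in $(\sqrt{m-1},\sqrt{m+1}]$, it follows that
\begin{align}
\notag
|\sqrt{u}-\eta| < \sqrt{m+1}-\sqrt{m-1} = \frac{2}{\sqrt{m+1}+\sqrt{m-1}} \le \frac{1}{\sqrt{m-1}},
\end{align}
where the last inequality uses $m>1$ (so that $m-1\ge 1$) and $\sqrt{m+1}\ge\sqrt{m-1}$. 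Next I would invoke the elementary bound $|\tanh\xi|\le|\xi|$ for real $\xi$ (immediate from $\tanh 0=0$ and $(\tanh\xi)'=1-\tanh^{2}\xi\le 1$) to obtain $\left|\tanh[c_{r}(\sqrt{u}-\eta)]\right|\le c_{r}|\sqrt{u}-\eta|<c_{r}/\sqrt{m-1}$; applying the decreasing function $-\log(\cdot)$ then yields
\begin{align}
\notag
G_{\eta}(u) = -\log\left|\tanh[c_{r}(\sqrt{u}-\eta)]\right| \ge -\log\!\big(c_{r}/\sqrt{m-1}\big) = \frac{1}{2}\log(m-1)-\log c_{r},
\end{align}
which is the claim; in the degenerate case $u=\eta^{2}$ one has $G_{\eta}(u)=+\infty$ and there is nothing to prove.

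I do not expect a real obstacle here — once the positions of $\sqrt{u}$ and $\eta$ are pinned down, the estimate is a single line via $|\tanh\xi|\le|\xi|$. The only points that require a little attention are that the sign condition $\eta>0$ must be taken from Step~3 rather than from the bare hypotheses of the lemma, and that the conclusion is simply vacuous (its right-hand side being nonpositive) when $\sqrt{m-1}\le c_{r}$, so that no separate treatment of small $m$ is needed; the same one-line argument covers every admissible $m$ and $u$.
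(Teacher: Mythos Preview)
Your proof is correct and follows essentially the same line as the paper's own argument: both start from $|\tanh\xi|\le|\xi|$ and then bound $|\sqrt{u}-\eta|$ by roughly $1/\sqrt{m-1}$, the paper via the factorization $\sqrt{u}-\eta=(u-\eta^{2})/(\sqrt{u}+\eta)$ and you via the direct estimate $|\sqrt{u}-\eta|<\sqrt{m+1}-\sqrt{m-1}$. Your observation that the hypothesis $\eta>0$ is silently imported from Step~3 is apt; the paper's proof uses it in exactly the same place (to get $\sqrt{u}+\eta\ge\sqrt{m}+\sqrt{m-1}$).
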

%--------
\begin{proof}
Noting that $| \tanh x | \leq |x|$ for any $x \in \mathbf{R}$, 
we have
\begin{align}
G_{\eta}(u)
& =
- \log \left| \tanh[c_{r} (\sqrt{u} - \eta)] \right|
\geq 
- \log \left| c_{r} (\sqrt{u} - \eta) \right|
= 
- \log \left| \frac{c_{r} (u - \eta^{2})}{\sqrt{u} + \eta} \right| \notag \\
& = 
\log \left| \sqrt{u} + \eta \right| - \log \left| c_{r} (u - \eta^{2}) \right| 
\geq
\log \left| \sqrt{m} + \sqrt{m-1} \right| - \log (2 c_{r} ) \notag \\
& \geq
\frac{1}{2} \log (m-1) - \log c_{r}.  
\notag
\end{align}
\end{proof}
%--------
\begin{lem}
\label{lem:int_minus_G_lower}
Let $m$ be a nonnegative integer satisfying $m - 1 < \eta^{2} < m+1$, and let $t_{r} = \tanh (2c_{r})/2$. 
Then, we have
\begin{align}
- \int_{m}^{m + 1} G_{\eta}(u)\, \dd u 
\geq 
- \frac{1}{2} \log (m+1) + \log \frac{t_{r}}{4} - \frac{3}{2}. 
\notag
\end{align}
\end{lem}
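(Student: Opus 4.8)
The starting point is the identity $-\int_{m}^{m+1} G_{\eta}(u)\,\dd u = \int_{m}^{m+1}\log\bigl|\tanh[c_{r}(\sqrt{u}-\eta)]\bigr|\,\dd u$, and the plan is to bound this last integral from below. First I would dispose of the easy regime $\eta<0$ with $m\ge 1$: since $\eta^{2}>m-1\ge 0$ we then have $\sqrt{u}-\eta=\sqrt{u}+|\eta|\ge\sqrt{m}\ge 1$ on $[m,m+1]$, hence $\tanh[c_{r}(\sqrt{u}-\eta)]\ge\tanh c_{r}$ and the integral is at least the absolute constant $\log\tanh c_{r}$; using $\tanh(2c_{r})\le 2\tanh c_{r}$ one checks immediately that $\log\tanh c_{r}\ge\log(t_{r}/4)-3/2$, so the claim holds here (with the term $-\tfrac12\log(m+1)\le 0$ to spare). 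The remaining case — which covers $\eta\ge 0$, and also $\eta<0$ with $m=0$ — is exactly the one in which $|\sqrt{u}-\eta|\le 2$ on all of $[m,m+1]$; this uniform bound is forced by $\eta^{2}>m-1$, which keeps $\sqrt{u}-\eta$ inside $(-1,\sqrt{2}\,]$.

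In this main case I would use the algebraic identity $\sqrt{u}-\eta=(u-\eta^{2})/(\sqrt{u}+\eta)$ and the decomposition $\log|\tanh x|=\log|x|+\log(|\tanh x|/|x|)$ to rewrite the integrand as
\[
\log c_{r}+\log|u-\eta^{2}|-\log|\sqrt{u}+\eta|+\log\frac{|\tanh[c_{r}(\sqrt{u}-\eta)]|}{|c_{r}(\sqrt{u}-\eta)|},
\]
and then bound the four integrals over $[m,m+1]$ separately. The first simply contributes $\log c_{r}$. For the second, the hypothesis $m-1<\eta^{2}<m+1$ confines $u-\eta^{2}$ to a unit interval contained in $(-1,2)$, and the explicit primitive $x\log|x|-x$ of $\log|x|$ shows that $\int_{m}^{m+1}\log|u-\eta^{2}|\,\dd u$ is bounded below by an absolute constant, with no dependence on $m$. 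For the fourth, $|\sqrt{u}-\eta|\le 2$ gives $|c_{r}(\sqrt{u}-\eta)|\le 2c_{r}$, and since $x\mapsto\tanh|x|/|x|$ is decreasing we obtain $|\tanh x|/|x|\ge\tanh(2c_{r})/(2c_{r})=t_{r}/c_{r}$, so this integral is at least $\log(t_{r}/c_{r})$. Adding the first and fourth bounds, the two $\log c_{r}$ terms cancel and leave $\log t_{r}$ together with absolute constants, which is why no further dependence on $r$ survives.

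The term that produces the advertised $-\tfrac12\log(m+1)$ is the third one: on $[m,m+1]$ one has $0<|\sqrt{u}+\eta|\le 2\sqrt{m+1}$, so $-\log|\sqrt{u}+\eta|\ge-\log 2-\tfrac12\log(m+1)$, and this bound survives integration (the possible logarithmic singularity at $u=\eta^{2}$ only makes the term larger and is integrable). Collecting the four estimates gives $-\int_{m}^{m+1}G_{\eta}(u)\,\dd u\ge-\tfrac12\log(m+1)+\log t_{r}-\log 2-2$, where $-2$ is the crude constant used for the $\log|u-\eta^{2}|$ integral, and the final comparison with the stated right-hand side $-\tfrac12\log(m+1)+\log(t_{r}/4)-3/2$ reduces to the true inequality $\log 2\ge 1/2$. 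I expect the main obstacle to be not the singularity itself — which is harmless, contributing only an absolute constant — but arranging for the $m$-dependence to come out as exactly $-\tfrac12\log(m+1)$: this is precisely what dictates factoring out $\sqrt{u}+\eta$ (a quantity of size $\sqrt{m+1}$) and estimating it on its own, rather than, say, substituting $v=\sqrt{u}$ and crudely bounding $v\le\sqrt{m+1}$ inside the singular integral, which would cost a loss of order $\sqrt{m+1}$.
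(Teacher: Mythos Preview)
Your argument is correct and reaches the stated bound, but it proceeds differently from the paper. Both proofs begin with the same chord bound $|\tanh(c_{r}x)|\ge t_{r}|x|$ for $|x|\le 2$, reducing matters to $\int_{m}^{m+1}\log|\sqrt{u}-\eta|\,\dd u$. The paper then substitutes $v=\sqrt{u}$, evaluates $\int 2v\log|v-\eta|\,\dd v$ via the explicit primitive $(v^{2}-\eta^{2})\log|v-\eta|-\tfrac12 v^{2}-\eta v$, and splits into the cases $\eta^{2}<m$ and $\eta^{2}>m$; the second case is handled by Jensen's inequality for $x\log x$ applied to a convex combination of $\sqrt{m+1}-\eta$ and $\eta-\sqrt{m}$. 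Your route instead factors $\sqrt{u}-\eta=(u-\eta^{2})/(\sqrt{u}+\eta)$ and bounds the two logarithms separately: the singular piece $\int_{m}^{m+1}\log|u-\eta^{2}|\,\dd u$ is an absolute constant, and the piece $-\log|\sqrt{u}+\eta|$ yields exactly the $-\tfrac12\log(m+1)$ via the trivial bound $|\sqrt{u}+\eta|\le 2\sqrt{m+1}$. This decomposition is more transparent about \emph{where} the $m$-dependence comes from, and it dispenses with both the case split and Jensen's inequality; the paper's computation, on the other hand, gives slightly sharper constants in Case~(a). One minor imprecision: your interval $(-1,\sqrt{2}\,]$ for $\sqrt{u}-\eta$ is valid only when $\eta\ge 0$; in the subcase $\eta<0$, $m=0$ one gets $(0,2)$ instead, but the bound $|\sqrt{u}-\eta|\le 2$ that you actually use still holds.
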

%--------
\begin{proof}
Because $| \tanh (c_{r} x) | \geq | (\tanh (2c_{r})/2) \, x |$ for any $x \in \mathbf{R}$ with $|x| \leq 2$, 
by letting $t_{r} = \tanh (2c_{r})/2$, 
we have 
\begin{align}
\label{eq:int_G_start_estim}
& - \int_{m}^{m + 1} G_{\eta}(u)\, \dd u 
= 
\int_{m}^{m + 1} \log \left| \tanh[ c_{r} (\sqrt{u} - \eta) ] \right| \, \dd u \\
& \geq 
\int_{m}^{m + 1} \log \left| t_{r} (\sqrt{u} - \eta) \right| \, \dd u 
\geq 
\int_{\sqrt{m}}^{\sqrt{m + 1}} 2 \log \left| v - \eta \right| \, v\, \dd v + \log t_{r}.
\notag 
\end{align}
For the estimate of the RHS in \eqref{eq:int_G_start_estim}, 
we use the following indefinite integral:
\begin{align}
\int 2\log \left| v - \eta \right| \, v\, \dd v
= 
(v^{2} - \eta^{2}) \log |v - \eta| - \frac{1}{2} v^{2} - \eta v + c',
\notag
\end{align}
where $c'$ is a constant independent of $v$. 
In the following, we consider the following two cases: 
(a) $m - 1 < \eta^{2} < m$ and
(b) $m < \eta^{2} < m + 1$.
Note that Case (a) is void if $m = 0$.  

\bigskip

\noindent
\underline{Case (a)}\\

We have
\begin{align}
& \int_{\sqrt{m}}^{\sqrt{m + 1}} 2 \log \left| v - \eta \right| \, v\, \dd v \notag \\
& = 
(m+1 - \eta^{2}) \log (\sqrt{m+1} - \eta) - \frac{1}{2}(m+1) - \eta \sqrt{m+1} \notag \\
& \phantom{=} \ -\left[ (m - \eta^{2}) \log (\sqrt{m} - \eta) - \frac{1}{2}m - \eta \sqrt{m} \right] \notag \\
& = 
\log (\sqrt{m+1} - \eta) + (m - \eta^{2}) \log \frac{\sqrt{m+1} - \eta}{\sqrt{m} - \eta}
- \frac{1}{2} - \frac{\eta}{\sqrt{m+1} + \sqrt{m}} \notag \\
& \geq 
\log (\sqrt{m+1} - \eta) - \frac{1}{2} - \frac{\eta}{\sqrt{m+1} + \sqrt{m}} \notag \\
& \geq 
\log (\sqrt{m+1} - \eta) - 1. \notag
\end{align}
Furthermore, the term $\log (\sqrt{m+1} - \eta)$ is bounded from below as follows:
\begin{align}
& \log (\sqrt{m+1} - \eta) 
\geq 
\log (\sqrt{m+1} - \sqrt{m}) \notag \\
& = 
\log \frac{1}{\sqrt{m+1} + \sqrt{m}}
\geq - \frac{1}{2} \log (m+1) + \log \frac{1}{2}.
\notag
\end{align}
From these and~\eqref{eq:int_G_start_estim}, we have
\begin{align}
- \int_{m}^{m + 1} G_{\eta}(u)\, \dd u 
\geq
- \frac{1}{2} \log (m+1) + \log \frac{t_{r}}{2} - 1.
\label{eq:int_G_Case_a}
\end{align}

\bigskip

\noindent
\underline{Case (b)}\\

We have 
\begin{align}
& \int_{\sqrt{m}}^{\sqrt{m + 1}} 2 \log \left| v - \eta \right| \, v\, \dd v 
= 
\left( \int_{\sqrt{m}}^{\eta} + \int_{\eta}^{\sqrt{m + 1}} \right) 2 \log \left| v - \eta \right| \, v\, \dd v 
\notag \\
& =
-\left[ (m - \eta^{2}) \log (\eta - \sqrt{m} ) - \frac{1}{2}m - \eta \sqrt{m} \right] \notag \\
& \phantom{=}\ + (m+1 - \eta^{2}) \log (\sqrt{m+1} - \eta) - \frac{1}{2}(m+1) - \eta \sqrt{m+1} \notag \\
& = 
(m+1 - \eta^{2}) \log (\sqrt{m+1} - \eta) + (\eta^{2} - m) \log (\eta - \sqrt{m} ) 
- \frac{1}{2} - \frac{\eta}{\sqrt{m+1} + \sqrt{m}} \notag \\
& \geq
(m+1 - \eta^{2}) \log (\sqrt{m+1} - \eta) + (\eta^{2} - m) \log (\eta - \sqrt{m} ) 
- \frac{3}{2}. 
\notag 
\end{align}
For the estimate of the first and second terms of the last line above, 
we employ Jensen's inequality 
\begin{align}
\lambda \, f(x_{1}) + (1- \lambda)\, f(x_{2}) \geq f(\lambda x_{1} + (1- \lambda)\, x_{2})
\qquad
(0 \leq \lambda \leq 1, \ x_{1}, x_{2}> 0)
\notag
\end{align}
for the function $f(x) = x \log x$ by letting 
\begin{align}
\lambda = \frac{\sqrt{m+1} + \eta}{\sqrt{m+1} + \sqrt{m} + 2\eta}, \quad 
x_{1} = \sqrt{m+1} - \eta, \quad
x_{2} = \eta - \sqrt{m}. 
\notag 
\end{align}
Then, we have 
\begin{align}
& (m+1 - \eta^{2}) \log (\sqrt{m+1} - \eta) + (\eta^{2} - m) \log (\eta - \sqrt{m} ) \notag \\
& \geq 
\log \frac{1}{\sqrt{m+1} + \sqrt{m} + 2\eta}
\geq 
\log \frac{1}{4 \sqrt{m+1}}
= -\frac{1}{2} \log (m + 1) + \log \frac{1}{4}. 
\notag
\end{align}
From these and~\eqref{eq:int_G_start_estim}, we have
\begin{align}
- \int_{m}^{m + 1} G_{\eta}(u)\, \dd u 
\geq
- \frac{1}{2} \log (m+1) + \log \frac{t_{r}}{4} - \frac{3}{2}.
\label{eq:int_G_Case_b}
\end{align}

Combining~\eqref{eq:int_G_Case_a} in Case (a) and~\eqref{eq:int_G_Case_b} in Case (b), 
we obtain the conclusion of Lemma~\ref{lem:int_minus_G_lower}.
\end{proof}
%--------

\bigskip

By using Lemmas~\ref{lem:G_lower} and~\ref{lem:int_minus_G_lower}, we finish Step 3. 

\bigskip

\noindent
\underline{Case (i)}\\

In this case, $\ell_{\mathrm{s}} = N_{0} - 1$ holds. 
By 
letting $m = N_{0} - 1$ in Lemmas~\ref{lem:G_lower} and~\ref{lem:int_minus_G_lower}, and
letting $u = N_{0} - 1/2$ in Lemma~\ref{lem:G_lower}, 
we have
\begin{align}
G_{\eta}(N_{0} - 1/2) - \int_{N_{0} - 1}^{N_{0}} G_{\eta}(u)\, \dd u
\geq 
\frac{1}{2} \log \frac{N_{0}-2}{N_{0}} + \log \frac{t_{r}}{4c_{r}} - \frac{3}{2}.
\label{eq:Case1_fourth_term}
\end{align}
Thus, the value in~\eqref{eq:fourth_term} is bounded from below by a constant independent of $N_{0}$ and $\eta$. 

\bigskip

\noindent
\underline{Case (ii)} \\

In this case, $\ell_{\mathrm{s}} = N_{0} - 2$ holds. 
First, 
we can estimate the value in~\eqref{eq:fourth_term} in the same manner as~\eqref{eq:Case1_fourth_term} in Case (i). 
Next, 
we estimate the value in \eqref{eq:singular_error}. 
By 
letting $m = \ell_{\mathrm{s}}$ in Lemmas~\ref{lem:G_lower} and~\ref{lem:int_minus_G_lower}, and 
letting $u = \ell_{\mathrm{s}}$ and $u = \ell_{\mathrm{s}}+1$ in Lemma~\ref{lem:G_lower}, 
we have
\begin{align}
\frac{1}{2} G_{\eta}(\ell_{\mathrm{s}}) + \frac{1}{2} G_{\eta}(\ell_{\mathrm{s}}+1) 
- \int_{\ell_{\mathrm{s}}}^{\ell_{\mathrm{s}} + 1} G_{\eta}(u)\, \dd u
\geq
\frac{1}{2} \log \frac{ \ell_{\mathrm{s}} - 1 }{\ell_{\mathrm{s}} + 1} 
+ \log \frac{t_{r}}{4c_{r}} - \frac{3}{2}.
\label{eq:Case2_trap_err}
\end{align}
Thus, the value in \eqref{eq:singular_error} is bounded from below by a constant independent of $N_{0}$ and $\eta$. 

\bigskip

\noindent
\underline{Case (iii)} \\

In this case, $0 \leq \ell_{\mathrm{s}} \leq N_{0} - 3$ holds. 
First, 
we estimate the value in \eqref{eq:singular_error}. 
In the case that $\ell_{\mathrm{s}} > 1$, 
we can estimate it in the same manner as~\eqref{eq:Case2_trap_err} in Case (ii).
In the case that $0 \leq \ell_{\mathrm{s}} \leq 1$,  
by Lemma~\ref{lem:int_minus_G_lower} with $m = \ell_{\mathrm{s}}$, 
we have
\begin{align}
& \frac{1}{2} G_{\eta}(\ell_{\mathrm{s}}) + \frac{1}{2} G_{\eta}(\ell_{\mathrm{s}}+1) 
- \int_{\ell_{\mathrm{s}}}^{\ell_{\mathrm{s}} + 1} G_{\eta}(u)\, \dd u \notag \\
& \geq
- \int_{\ell_{\mathrm{s}}}^{\ell_{\mathrm{s}} + 1} G_{\eta}(u)\, \dd u
\geq 
- \frac{1}{2} \log 2 + \log \frac{t_{r}}{4} - \frac{3}{2}. 
\notag
\end{align}
Thus, the value in \eqref{eq:singular_error} is bounded from below by a constant independent of $N_{0}$ and $\eta$. 
Next, 
we estimate the value in \eqref{eq:fourth_term}.
Because $G_{\eta}$ is monotone decreasing on $(\eta^{2}, \infty)$, we have
\begin{align}
\label{eq:fourth_term_final_estimate}
& G_{\eta}(N_{0} - 1/2) - \int_{N_{0} - 1}^{N_{0}} G_{\eta}(u)\, \dd u
\geq
G_{\eta}(N_{0} - 1/2) - G_{\eta}(N_{0} - 1) \\
& = 
- \log \left| \tanh[c_{r} (\sqrt{N_{0} - 1/2} - \eta)] \right|
+ \log \left| \tanh[c_{r} (\sqrt{N_{0} - 1} - \eta)] \right|. 
\notag 
\end{align}
If we regard the function given by the RHS in \eqref{eq:fourth_term_final_estimate} as a function of $\eta$, 
it is monotone decreasing because its derivative with respect to $\eta$ satisfies
\begin{align}
\frac{c_{r}}{\sinh[2c_{r} (\sqrt{N_{0}-1/2} - \eta)]} - 
\frac{c_{r}}{\sinh[2c_{r} (\sqrt{N_{0}-1} - \eta)]} < 0. 
\notag
\end{align}
Therefore, we have
\begin{align}
& - \log \left| \tanh[c_{r} (\sqrt{N_{0} - 1/2} - \eta)] \right|
+ \log \left| \tanh[c_{r} (\sqrt{N_{0} - 1} - \eta)] \right| \notag \\
& \geq
- \log \left| \tanh[c_{r} (\sqrt{N_{0} - 1/2} - \sqrt{N_{0}-2})] \right|
+ \log \left| \tanh[c_{r} (\sqrt{N_{0} - 1} - \sqrt{N_{0}-2})] \right| \notag \\
& \geq
- \log \left| c_{r} (\sqrt{N_{0} - 1/2} - \sqrt{N_{0}-2}) \right|
+ \log \left| (\tanh c_{r}) (\sqrt{N_{0} - 1} - \sqrt{N_{0}-2}) \right| \notag \\
& =
\log \left| \frac{\sqrt{N_{0} - 1} - \sqrt{N_{0}-2}}{\sqrt{N_{0} - 1/2} - \sqrt{N_{0}-2}} \right|
+ \log \frac{\tanh c_{r}}{ c_{r} } \notag \\
& = 
\log \left| \frac{\sqrt{N_{0} - 1/2} + \sqrt{N_{0}-2}}{\sqrt{N_{0} - 1} + \sqrt{N_{0}-2}} \right|
+ \log \frac{2 \tanh c_{r}}{ 3 c_{r} }. 
\notag
\end{align}
Thus, the value in~\eqref{eq:fourth_term} is bounded from below by a constant independent of $N_{0}$ and $\eta$. 

\bigskip

From the estimates in the three cases above, Step 3 is completed.
Thus, Lemma~\ref{lem:cont_vs_disc} is proven.

%--------------------
\section*{Acknowledgment}

T. Okayama and M. Sugihara are supported by the grant-in-aid of Japan Society for the Promotion of
Science (KAKENHI Grant Numbers: JP24760060 (Okayama), JP25390146 (Sugihara)).

%--------------------
%    Bibliographies can be prepared with BibTeX using amsplain,
%    amsalpha, or (for "historical" overviews) natbib style.
% \bibliographystyle{amsplain}
%    Insert the bibliography data here.

% References

\end{document}